\newtheorem{proposition}{Proposition}[section]
\newtheorem{theorem}{Theorem}[section]
\newtheorem{lemma}[theorem]{Lemma}
\newtheorem{prop}[theorem]{Proposition}
\newtheorem{coro}[theorem]{Corollary}
\newtheorem{remark}[theorem]{Remark}
\newtheorem{definition}[theorem]{Definition}
\newcommand{\mc}{\mathcal}
\newcommand{\cal}{\mathcal}
\newcommand{\rr}{\mathbb{R}}
\newcommand{\R}{\mathbb{R}}
\newcommand{\C}{\mathbb{C}}
\newcommand{\nn}{\mathbb{N}}
\newcommand{\cc}{\mathbb{C}}
\newcommand{\eps}{\epsilon}
\newcommand{\pl}{\partial}
\newcommand{\x}{\times}
\newcommand{\til}{\widetilde}
\newcommand{\bbar}{\overline}
\newcommand{\supp}{\textrm{supp}}
\newcommand{\cjd}{\rangle}
\newcommand{\cjg}{\langle}
\newcommand{\demi}{\frac{1}{2}}
\newcommand{\indic}{\operatorname{1\negthinspace l}}
\newcommand{\im}{\mathop{\hbox{\rm Im}}\nolimits}
\newcommand{\re}{\mathop{\hbox{\rm Re}}\nolimits}
\def\qed{\hfill$\square$\medskip}
\newcommand{\D}{\mathbb{D}}
\newcommand{\lt}[1]{{\mar{LT:#1}}}
\def\qed{\hfill$\square$\medskip}
\newcommand{\mar}[1]{{\marginpar{\sffamily{\scriptsize
        #1}}}}
\begin{document}
\title[The Reflection Principle and Calder\'on Problems with Partial Data]{The Reflection Principle and Calder\'on Problems with Partial Data}

\author{Leo Tzou}
\address{School of Mathematics and Statistics\\
Sydney University\\
}
\email{leo.tzou@gmail.com}

\maketitle
\begin{section}{Introduction}
Let $M_0$ be a smooth Riemann surface with boundary, equipped with a metric $g$.
A complex line bundle $E$ on $M_0$ has a trivialization $E\simeq M_0\x\cc$, thus there is a non-vanishing smooth section
$s:M_0\to E$,  and  a connection $\nabla$ on $E$ induces  
a complex valued $1$-form $iX$ on $M_0$ (where $i=\sqrt{-1}\in\cc$) defined by
$\nabla s=s\otimes iX$, which means that $\nabla (fs)=s\otimes (d+ iX)f$ if $d$ is the exterior derivative. 
The associated connection Laplacian ($*$ is the Hodge operator with respect to $g$) is the operator
\[\Delta^X := {\nabla^X}^*\nabla^X = -*(d* +i X\wedge *)(d +iX)\] 
acting on complex valued functions (sections of $E$). 
When $X$ is real valued, this operator is often called the \emph{magnetic Laplacian} associated 
to the magnetic field $dX$, and the connection $1$-form $X$ can be seen as to a connection $1$-form
on the principal bundle $M_0\x S^1$ by identifying $i\rr\subset \cc$ with the Lie algebra of $S^1$. This also corresponds 
to a Hermitian connection, in the sense that it preserves the natural Hermitian product on $E$.
Let $V$ be a complex valued  function on $M_0$ and assume that the $1$-form $X$ is real valued,
and consider the \emph{magnetic Schr\"odinger Laplacian}
associated to the couple $(X,V)$ 
\begin{equation}\label{defL}
L _{X,V}:=  {\nabla^X}^*\nabla^X + V= -*(d* + iX\wedge *)(d +iX)+V.
\end{equation}
If $H^s(M_0)$ denotes the  Sobolev space with $s$ derivatives in $L^2$ and $\Gamma\subset \partial M_0$ is an open subset such that $\partial M_0 \backslash \Gamma$ contains an open segment, we define the partial Cauchy data space of $L_{X,V}$ to be
{\small \begin{equation}\label{CL}
\mc{C}_{X,V, \partial M_0\backslash \Gamma} := \{ (u,\nabla^X_{\nu} u\mid_{\partial M_0\backslash\Gamma})\mid  u\in H^1(M_0), \supp(u\mid_{\partial M_0}) \subset \partial M_0\backslash \Gamma, L_{X,V}u=0\}
\end{equation}}where $\nu$ is the outward pointing unit normal vector field to $\pl M_0$ and 
$\nabla_\nu^Xu:=(\nabla^Xu)(\nu)$ . 
The first natural inverse problem is to see if the Cauchy data space determines the connection form $X$ and the potential $V$ uniquely,
and one easily sees that it is not the case since there are gauge invariances in the problem: for instance,
conjugating $L_{X,V}$ by $e^{f}$ with $f=0$ on $\pl M_0\backslash \Gamma$, one obtains the same partial Cauchy data space but with a Laplacian associated to the  connection $\nabla^{X+df}$, therefore it is not possible to identify $X$ but rather one should expect to recover the connection $\nabla^X$ modulo isomorphism.

It was shown in \cite{gafa} and \cite{albin guillarmou tzou} that, in the special case when $\Gamma = \emptyset$, the Cauchy data uniquely determines the connection $\nabla^X$ up to unitary bundle isomorphisms which are identity on the boundary and the potential $V$. This was done in \cite{gafa} through showing that the Cauchy data determines the integrals of $X$ along closed loops modulo integer multiples of $2\pi$. For planar domains, this result was first proved by Imanuvilov-Yamamoto-Uhlmann in \cite{IUY2} assuming only partial data measurement.

For these types of results in Euclidean domains of dimensions three and higher, we refer the readers to the works of Henkin-Novikov \cite{HeNo2}, Sun \cite{Sun1, Sun2}, Nakamura-Sun-Uhlmann in \cite{NaSuUh}, Kang-Uhlmann in \cite{KaUh}, and for partial data Dos Santos Ferreira-Kenig-Sj\"ostrand-Uhlmann in \cite{DKSU}. 
For simply connected planar domains, Imanuvilov-Yamamoto-Uhlmann in \cite{IUY2} deal with  
the case of general second order elliptic operators for partial data measurement, and Lai \cite{Lai} deals with the special case
of magnetic Schr\"odinger operator for full data measurement.

For $s\in \nn,p\in[1,\infty]$, let us denote by $W^{s,p}(M_0)$ and $W^{s,p}(M_0; T^*M_0)$ the Sobolev spaces consisting of functions and 1-forms respectively with $s$ derivatives in $L^p$. If $X_1, X_2 \in W^{3,p}(M_0; T^*M_0)$ and $V_1, V_2\in W^{2,p}(M_0)$ for $p$ large, we assume that the partial Cauchy data spaces for $L_{X_1,V_1}$ and $L_{X_2, V_2}$ agree
\begin{eqnarray}
\label{main assumption}
{\mathcal C}_{X_1, V_1, \partial M_0\backslash \Gamma}={\mathcal C}_{X_2, V_2, \partial M_0\backslash \Gamma}.
\end{eqnarray}
As the Cauchy data is invariant under the gauge transformation $X \mapsto X+ d\zeta$ for $\zeta \in W^{4,p}(M_0)\cap H^1_0(M_0)$, we may assume without loss of generality that 
\begin{eqnarray}
\label{boundary normal assumption}
 \iota_\nu(X_1 - X_2) = 0.
\end{eqnarray}
The main result of this paper is the following generalization of the results of \cite{gafa}:
\begin{theorem}
\label{main thm}
Let $X_1, X_2\in W^{3,p}(M_0; T^*M_0)$ be real-valued 1-forms and $V_1, V_2 \in W^{2,p}(M_0)$ be functions such that they satisfy \eqref{main assumption} and \eqref{boundary normal assumption}. Then there exists a non-vanishing function $\Theta$ with $\Theta\mid_{\partial M_0\backslash \Gamma} = 1$ such that $iX_1 = iX_2 + \Theta^{-1} d\Theta$ and $V_1 = V_2$.
\end{theorem}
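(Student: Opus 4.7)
The title and the hypothesis that $\partial M_0\setminus\Gamma$ contains an open segment both strongly suggest that the argument proceeds by Schwarz reflection: one doubles $M_0$ across the inaccessible part $\Gamma$ of the boundary and reduces Theorem \ref{main thm} to the full-data result of \cite{gafa, albin guillarmou tzou} on the resulting Riemann surface. Assuming $\Gamma\ne \emptyset$ (the case $\Gamma = \emptyset$ being precisely \cite{gafa}), one forms $M := M_0 \cup_\Gamma M_0^*$ with its antiholomorphic involution $\tau\colon M\to M$ fixing $\overline{\Gamma}$ and exchanging the two halves; its boundary $\partial M$ consists of two copies of $\partial M_0\setminus \Gamma$. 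The gauge freedom $X_j\mapsto X_j + d\zeta_j$ with $\zeta_j\in W^{4,p}(M_0)\cap H^1_0(M_0)$ allows, in addition to the normalization $\iota_\nu(X_1 - X_2) = 0$ of the excerpt, the further normalization $\iota_\nu X_j = 0$ on $\Gamma$ for each $j$; this guarantees that the extension defined by $\widetilde X_j := X_j$ on $M_0$ and $\widetilde X_j := \tau^*X_j$ on $M_0^*$ is continuous across $\Gamma$. The potentials extend similarly by $\widetilde V_j := V_j\circ \tau$ on $M_0^*$. By construction the extended operators $L_{\widetilde X_j,\widetilde V_j}$ on $M$ commute with $\tau^*$.

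\emph{Odd extension of solutions.} By the reflection principle applied to the magnetic Schrödinger equation, every $H^1$ solution $u$ of $L_{X_j,V_j} u = 0$ on $M_0$ with $u|_\Gamma = 0$ extends by $\widetilde u := u$ on $M_0$ and $\widetilde u := - u\circ\tau$ on $M_0^*$ to an $H^1$ solution of $L_{\widetilde X_j, \widetilde V_j}\widetilde u = 0$ on $M$; continuity of $\widetilde u$ follows from $u|_\Gamma = 0$ and the matching of the conormal derivative across $\Gamma$ is automatic because $\iota_\nu X_j = 0$ there. This sets up a bijection between elements of $\mathcal{C}_{X_j, V_j, \partial M_0\setminus\Gamma}$ and the $\tau$-antisymmetric part of the full Cauchy data of $L_{\widetilde X_j,\widetilde V_j}$ on $\partial M$. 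Under this bijection, the hypothesis \eqref{main assumption} becomes equality of the Cauchy data of $L_{\widetilde X_1,\widetilde V_1}$ and $L_{\widetilde X_2,\widetilde V_2}$ restricted to $\tau$-antisymmetric solutions.

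\emph{Application of the full-data theorem.} Since $L_{\widetilde X_j, \widetilde V_j}$ commutes with $\tau^*$, one can antisymmetrize the complex geometric optics solutions $u_h = e^{\Phi/h}(a + r_h)$ of \cite{gafa} on $M$ by replacing them with $u_h - u_h\circ\tau$; the two pieces carry holomorphic and antiholomorphic phases, so the antisymmetrization does not destroy the asymptotic structure, and the stationary-phase identities of \cite{gafa,albin guillarmou tzou} go through using $\tau$-antisymmetric test solutions only. One thereby obtains a non-vanishing $\widetilde \Theta$ on $M$ with $\widetilde\Theta|_{\partial M} = 1$, $i\widetilde X_1 = i\widetilde X_2 + \widetilde\Theta^{-1}d\widetilde\Theta$, and $\widetilde V_1 = \widetilde V_2$. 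Restricting $\widetilde\Theta$ to $M_0$ yields the $\Theta$ of the theorem, with $\Theta|_{\partial M_0\setminus \Gamma} = 1$ inherited from $\widetilde\Theta|_{\partial M} = 1$.

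\emph{Main obstacle.} The crux is to verify that the CGO-based recovery of \cite{gafa, albin guillarmou tzou} can be carried out using only $\tau$-antisymmetric test solutions, or equivalently that this subspace already carries enough information to pin down the connection and potential up to gauge. A secondary but nontrivial technical issue is the regularity of the extended data across $\Gamma$: continuity follows from the normalization, but matching of higher-order derivatives within $W^{3,p}$ requires further gauge work, and because $\Gamma$ is merely an open subset (rather than a single smooth arc) the boundary $\partial M$ may acquire corners that one handles either by shrinking to a smooth subarc and iterating or by working on a doubled surface with piecewise smooth boundary.
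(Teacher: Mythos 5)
Your proposal takes a genuinely different route from the paper. You propose reflecting $M_0$ across the inaccessible part $\Gamma$ to produce a doubled surface whose boundary is two copies of $\partial M_0\setminus\Gamma$, and then reducing to the \emph{full}-data theorem of \cite{gafa, albin guillarmou tzou}. The paper does not attempt such a reduction. Instead, it first proves Theorem~\ref{main thm, larger Gamma} for a possibly larger inaccessible set $\Gamma_0$ whose complement is a single segment (Theorem~\ref{main thm} is then derived ``by unique continuation and gauge transformation''), and the proof of Theorem~\ref{main thm, larger Gamma} is a partial-data argument all the way through: it extends the coefficients to a surface $M\supset M_0$ with cylindrical metric near $\partial M$, constructs CGO directly on $M_0$ vanishing on $\Gamma_0$, and derives the new boundary integral identity of Proposition~\ref{new boundary integral id}. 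Reflection enters only as an auxiliary device at two specific places: to shift the Carleman estimate (the doubled surface makes $\langle hD\rangle^{-1}$ applicable without destroying the $\Gamma'$-boundary condition; Section~3), and to prove Lemma~\ref{antimeromorphic statement} via the characterization of boundary values of meromorphic functions on $\dot M^D$. In particular, the paper's doubling is of the \emph{enlarged} surface $M$ across its \emph{entire} boundary, not of $M_0$ across $\Gamma$ alone, and it is used to build objects (weights, conjugation factors) rather than to transplant the problem into a full-data one.

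There are two genuine gaps in your proposal, and the second is more serious than you indicate. First, as you say, the hypothesis only gives agreement of Cauchy data on $\tau$-antisymmetric solutions, so the full-data theorem cannot be cited; one must re-run the CGO argument with only antisymmetric test solutions and verify that the stationary-phase identities still determine the connection and potential. That is not cosmetic: the CGO of \cite{gafa} carry holomorphic phases $e^{\Phi/h}$, and since $\tau$ is anticonformal, antisymmetrization produces a superposition with an antiholomorphic phase $e^{\overline\Phi/h}$, so one has to control cross terms (this is in effect what the paper's Proposition~\ref{new boundary integral id} and Sections~5--6 do, and it is the bulk of the work). Second, the reflected extension $\widetilde X_j$ will generically not lie in $W^{3,p}$ near $\Gamma$. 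In boundary-normal coordinates $(t,s)$ near $\Gamma$, write $X=a\,dt+b\,ds$; your extension is the odd extension of $a$ and the even extension of $b$ in $t$. The normalization $a|_{t=0}=0$ gives continuity, but $C^1$-matching of the even extension of $b$ requires $\partial_t b|_{t=0}=0$, and trying to arrange this by a further gauge $X\mapsto X+d\zeta$ with $\partial_t\zeta|_{t=0}=-a|_{t=0}$ runs into the compatibility condition $\partial_s a|_{t=0}=\partial_t b|_{t=0}$, i.e.\ $dX|_\Gamma=0$. The magnetic field need not vanish on $\Gamma$, so the extension is generically only $W^{1,\infty}$ across $\Gamma$ — far below the $W^{3,p}$ regularity needed for the CGO remainder estimates. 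The paper sidesteps this precisely because it never requires high regularity of the coefficients beyond $M_0$: the extension to $M$ in Corollary~\ref{same cauchy data in M0'} is only $W^{1,\infty}$, and the high-regularity CGO analysis stays inside $M_0$.
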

To simplify the geometry it is sometimes convenient to consider larger $\Gamma$. As such we will prove the following auxiliary theorem.
\begin{theorem}
\label{main thm, larger Gamma}
Let $X_1, X_2\in W^{3,p}(M_0; T^*M_0)$ be real-valued 1-forms and $V_1, V_2 \in W^{2,p}(M_0)$ be functions such that they satisfy \eqref{main assumption} and \eqref{boundary normal assumption}. Then there exists a subset $\Gamma_0\subset \partial M_0$ containing $\Gamma$ with $\partial M_0\backslash \bar\Gamma_0$ a connected open segment $\partial M_0$, and a non-vanishing function $\Theta$ with $\Theta\mid_{\partial M_0\backslash \Gamma_0} = 1$ such that $iX_1 = iX_2 + \Theta^{-1} d\Theta$ and $V_1 = V_2$.
\end{theorem}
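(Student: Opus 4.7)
The plan is to use a Schwarz reflection across the accessible arc to reduce the partial-data problem to one on a doubled Riemann surface, where the techniques of \cite{gafa} can be adapted using the induced involution symmetry.

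First, I would enlarge $\Gamma$ to a closed set $\Gamma_0 \supseteq \Gamma$ so that $E:=\partial M_0\setminus \bar\Gamma_0$ is a single open boundary segment; this is possible because $\partial M_0\setminus \Gamma$ is assumed to contain an open arc. The hypothesis \eqref{main assumption} then descends to the equality $\mc{C}_{X_1,V_1,\partial M_0\setminus\Gamma_0}=\mc{C}_{X_2,V_2,\partial M_0\setminus\Gamma_0}$ by a routine inspection: if $u_1$ has Dirichlet trace supported in $\partial M_0\setminus\Gamma_0$, then a fortiori it is supported in $\partial M_0\setminus \Gamma$, and the matching $u_2$ for $L_{X_2,V_2}$ provided by \eqref{main assumption} agrees with $u_1$ on $\Gamma_0\setminus\Gamma$, hence also vanishes on $\Gamma_0$. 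After this reduction it suffices to work with $\Gamma_0$ throughout, and the desired conclusion is exactly $\Theta|_{\partial M_0\setminus\Gamma_0}=1$.

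Next, form the Schwarz double $\til M$ by gluing two copies of $M_0$ along $E$; this is a Riemann surface with boundary $\Gamma_0\sqcup\tau(\Gamma_0)$ and an anti-holomorphic involution $\tau$ fixing $E$ pointwise. Extend the coefficients by $\tau$-pullback with the sign conventions that make the magnetic Schr\"odinger operator $\tau$-invariant, so that tangential components of $X_j$ reflect as functions while the normal component reverses sign. The gauge \eqref{boundary normal assumption} is essential here: without it, $\til X_1-\til X_2$ would develop a jump across $E$, and the doubled operators would lose the regularity $W^{3,p}$ required for the CGO construction. With the doubled setup in place, the partial-data hypothesis upgrades to an identity on $\til M$: given $u_1$ solving $L_{X_1,V_1}u_1=0$ with Dirichlet trace supported in $E$, the matching $u_2$ has the same Cauchy data on $E$, and $\tau$-reflection then produces solutions $\til u_j$ of $L_{\til X_j,\til V_j}\til u_j=0$ on $\til M$ with matching Cauchy data across a neighborhood of $E$. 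Testing this identity against a $\tau$-equivariant family of CGO solutions on $\til M$ --- built using the same holomorphic Carleman weights as in \cite{gafa, albin guillarmou tzou} but arranged to respect the involution --- should yield loop-integral identities that force $\til X_1-\til X_2$ to be exact and $\til V_1=\til V_2$. Because the recovered gauge function on $\til M$ is $\tau$-invariant by construction, it descends to $\Theta$ on $M_0$ with $\Theta|_E=1$.

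The main obstacle is to make the reflected setting genuinely workable. Even after doubling, the problem on $\til M$ is not a full-data problem but a partial-data one with inaccessible set $\Gamma_0\sqcup\tau(\Gamma_0)$; the reflection principle earns its keep only by providing the $\tau$-symmetry which effectively halves the unknowns and allows a reduction to the framework of \cite{gafa}. Consequently the technical heart of the argument is threefold: proving existence of a sufficiently rich family of $\tau$-equivariant CGOs on $\til M$, controlling the behaviour of their holomorphic phases at the fixed set $E$, and verifying that the (possibly reduced) regularity of the reflected connection at $E$ does not obstruct the loop-integral recovery scheme. I expect the control of the CGO phases near the fixed set $E$ to be the most delicate piece.
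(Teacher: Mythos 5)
Your plan identifies the right tool (reflection to a double) and the right intermediate reduction (enlarging $\Gamma$ so that $E=\partial M_0\setminus\bar\Gamma_0$ is a single arc), but the doubling axis you choose is the wrong one, and that is where the argument breaks down. You propose to reflect across the \emph{accessible} arc $E$, producing a surface $\til M$ whose boundary is $\Gamma_0\sqcup\tau(\Gamma_0)$ --- two copies of the \emph{inaccessible} set. You acknowledge this is still a partial-data problem, and assert that $\tau$-equivariance ``effectively halves the unknowns and allows a reduction to the framework of \cite{gafa}.'' This does not follow: \cite{gafa} is a full-data result, and the $\tau$-symmetry only says the unknown Neumann data on $\Gamma_0$ and on $\tau(\Gamma_0)$ are related by pullback; it does not determine either of them. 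Nothing in your construction produces Cauchy data on $\Gamma_0\sqcup\tau(\Gamma_0)$ for the doubled operators, so after doubling you are exactly where you started, with the same inaccessible set repeated twice. The reflection across $E$ buys you nothing structurally. By contrast, the actual proof first \emph{extends} $M_0$ to a slightly larger surface $M$ (possible because boundary determination gives $X_1=X_2$, $V_1=V_2$ on the accessible boundary, so the coefficients can be extended to agree outside $M_0$), removes a small half-disk around a point $q_0\in\partial M\setminus\bar\Gamma$ to form $M_0'$, and then reflects $M_0'$ across the \emph{inaccessible} part $\Gamma'=\partial M_0'\cap\partial M$. With that choice, the entire boundary of the doubled surface $\dot M^D$ is the doubled small accessible arc, and the CGO are required to vanish precisely on the interior curve $\Gamma'$ where the reflection is glued. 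That geometry is what permits the holomorphic-extension characterization (Lemma \ref{perpendicular to all conjugate even}) and the reflected shifted Carleman estimate (Proposition \ref{shifted carleman estimate with boundary}) to close. Reflecting across $E$ does not place the accessible data on the full boundary of the double and therefore cannot feed into those arguments.

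There is a second, independent gap: you never address the ``first difficulty'' the paper singles out, namely that the first-order terms in the standard boundary integral identity introduce derivatives of the Carleman phase into the integrand, which destroys the stationary-phase extraction at critical points even in the full-data setting. The paper's resolution is the conjugation by the non-vanishing solutions $F_{A_j}$ of $\bar\pl F_{A_j}=iA_jF_{A_j}$ and the resulting new boundary integral identity \eqref{new boundary integral equation} (Proposition \ref{new boundary integral id}), which places only $\bar\pl$ on each of $\til u_1,\til u_2$; the existence of the matched pair $F_{A_1},F_{A_2}$ with equal boundary values on $\partial M_0\setminus\Gamma_0$ is itself proved via the doubling (Lemma \ref{antimeromorphic statement}, Corollary \ref{same boundary condition for conjugation factor}). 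Simply ``testing against $\tau$-equivariant CGOs built as in \cite{gafa}'' does not supply this reduction and would leave uncontrolled first-order terms. A smaller but real issue: hypothesis \eqref{boundary normal assumption} controls $\iota_\nu(X_1-X_2)$, not $\iota_\nu X_j$ individually, so the reflected coefficients $\til X_j$ would jump across $E$ unless you first gauge each $X_j$ separately; as written your regularity claim for the reflected operators does not follow from \eqref{boundary normal assumption} alone.
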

Note that, unlike Theorem \ref{main thm}, we may assume without loss of generality in Theorem \ref{main thm, larger Gamma} that $\partial M_0\backslash \Gamma$ consists of a small line segment along the boundary. The fact that Theorem \ref{main thm} follows from Theorem \ref{main thm, larger Gamma} is a simple exercise in unique continuation and gauge transformation.

An approach to treat this problem in the case when $X_1=X_2=0$ was developed in \cite{duke}. The technique was based on ideas of  \cite{IUY} and \cite{bu} of constructing CGO vanishing on $\Gamma$ whose phase is stationary at a prescribed point. One then applies stationary phase expansion at the critical points to extract point-wise information on the coefficients. 

There are two difficulties when applying this technique to prove Theorem \ref{main thm, larger Gamma}. First, the presence of first order terms in the boundary integral identity causes derivatives of the phase function to appear in the integrand and thus prevent one from obtaining the desired information at the critical points of the phase function. Second, one needs to construction CGO with higher regularity via a "shifted" Carleman estimate. The standard methods of shifting loses track of the boundary structure (see e.g. \cite{DKSU}) and therefore it is not clear how one can construct CGO with $H^1_{scl}$ estimates and at the same time vanish on $\Gamma$. Chung in \cite{chung} resolved the "shifting" issue in $\R^n$ for $n\geq 3$ and our approach is partially inspired by his ideas. In the planar case, Imanuvilov-Uhlmann-Yamamoto in \cite{IUY2} overcame these difficulties by direct computation and our method, based more on geometry, differs significantly from their approach.

The first difficulty is resolved through the use of a new boundary integral identity:
\begin{prop}
\label{new boundary integral id}

Under the assumptions of \eqref{main assumption} and \eqref{boundary normal assumption}, if one sets $A_j := \pi_{0,1} X_j$, then there exists an open boundary component $\Gamma_0$ containing $\bar\Gamma$ with $\partial M_0 \backslash\bar\Gamma_0$ an open segment of $\partial M_0$, such that one can find non-vanishing functions $F_{A_j} \in W^{2,p}(M_0)\cap W^{4,p}_{loc}(M_0)$ solving
\begin{eqnarray}\label{boundary conditions for conjugation factor}F_{A_j}^{-1}\bar\pl F_{A_j} =i A_j,\ \ \ |F_{A_j}|\mid_{\Gamma_0} = 1\ \ j=1,2\ \ {\rm with}\ \ F_{A_1}\mid_{\partial M_0\backslash \Gamma_0} = F_{A_2}\mid_{\partial M_0\backslash \Gamma_0}.\end{eqnarray}
Furthermore, for any pair of $\{F_{A_1}, F_{A_2}\}$ satisfying \eqref{boundary conditions for conjugation factor} and solutions $u_j$ to
\[L_{X_j, V_j} u_j = 0\ \ \ \ u_j\mid_{\Gamma_0} =0\]
 one has
\begin{eqnarray}
\label{new boundary integral equation}
0=\int_{M_0} \langle (|F_{A_1}|^{-2} - |F_{A_2}|^{-2})\bar\pl \tilde u_1, \bar\pl \tilde u_2\rangle +  \frac{1}{2}\langle (Q_2 |F_{A_2}|^{2} -  Q_1 |F_{A_1}|^{2})\tilde u_1,\tilde u_2\rangle \end{eqnarray}
where $\tilde u_j = F_{A_j}u_j$ and $Q_j = *dX_j + V_j$.
\end{prop}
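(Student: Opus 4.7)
The plan is to establish the proposition in two stages: construct the conjugation factors $F_{A_j}$, then derive the integral identity.

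\emph{Construction of $F_{A_j}$.} Writing $F_{A_j} = e^{\Phi_j}$ reduces the equation $F_{A_j}^{-1}\bar\pl F_{A_j} = iA_j$ to the scalar $\bar\pl$-problem $\bar\pl \Phi_j = iA_j$, and the modulus normalization $|F_{A_j}| = 1$ on $\Gamma_0$ becomes the real boundary condition $\re \Phi_j|_{\Gamma_0}=0$. This mixed elliptic boundary value problem on the bordered Riemann surface $M_0$ is solvable by extending $A_j$ across an arc of $\pl M_0\setminus\bar\Gamma_0$ and inverting $\bar\pl$ via Hodge theory or a Cauchy-type kernel. To arrange the matching $F_{A_1}=F_{A_2}$ on $\pl M_0\setminus\Gamma_0$, I would exploit the gauge freedom of multiplying $F_{A_j}$ by any holomorphic function $G_j$ with $|G_j|=1$ on $\Gamma_0$, which preserves \eqref{boundary conditions for conjugation factor}. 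Matching then reduces to a Riemann--Hilbert problem for the ratio $G_1/G_2$ across the arc $\pl M_0\setminus\bar\Gamma_0$; this is where the Schwarz reflection principle of the paper's title enters, doubling $M_0$ across the arc so that the Riemann--Hilbert problem becomes a holomorphic extension problem on the doubled surface. The enlargement of $\Gamma$ to $\Gamma_0$ is imposed precisely so that $\pl M_0\setminus\bar\Gamma_0$ is a single smooth arc, making the doubling well-defined.

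\emph{The integral identity.} The analytic ingredient is the K\"ahler-type factorization on a Riemann surface,
\[
L_{X,V} \;=\; 2(\pl^{\bar A})^*\pl^{\bar A} + Q \;=\; 2(\bar\pl^A)^*\bar\pl^A + (V - *dX),\qquad Q = *dX + V,
\]
which follows from a direct isothermal coordinate computation. The defining equation of $F_{A_j}$ yields the conjugation relation $\bar\pl^{A_j}u_j = F_{A_j}^{-1}\bar\pl\til u_j$, which lets me rewrite $L_{X_j,V_j}u_j=0$ in $\til u_j$-variables with weights involving $|F_{A_j}|^{\pm 2}$. Testing the resulting weak form against a suitable combination built from $\til u_k$ and integrating by parts produces an identity of the form
\[
\int_{M_0}|F_{A_j}|^{-2}\langle\bar\pl\til u_1,\bar\pl\til u_2\rangle \;-\; \tfrac12\int_{M_0}Q_j|F_{A_j}|^2\,\til u_1\til u_2 \;=\; \mathcal{B}_j(u_1,u_2),
\]
where $\mathcal{B}_j$ is a boundary integral supported on $\pl M_0\setminus\Gamma_0$ (the $\Gamma_0$ contribution vanishes by $u_j|_{\Gamma_0}=0$). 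The Cauchy data hypothesis \eqref{main assumption}, combined with the matching $F_{A_1}=F_{A_2}$ on $\pl M_0\setminus\Gamma_0$, ensures $\mathcal{B}_1=\mathcal{B}_2$. Subtracting the $j=1$ and $j=2$ identities yields \eqref{new boundary integral equation}.

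\emph{Principal difficulty.} The hardest step is a careful accounting of the boundary terms. The first-order connection $iX$ in $L_{X,V}$ causes Green's identity to produce weighted boundary contributions that depend on the trace of $F_{A_j}$; it is precisely the combination of the unit-modulus condition on $\Gamma_0$ and the matching condition on $\pl M_0\setminus\Gamma_0$ in \eqref{boundary conditions for conjugation factor} that is tailored to force $\mathcal{B}_1 = \mathcal{B}_2$. Verifying that the interior $\pl F_{A_j}$ terms---which are not constrained by \eqref{boundary conditions for conjugation factor}---drop out of the final bulk identity is the most delicate point, and is what motivates the particular choice of boundary data on $F_{A_j}$.
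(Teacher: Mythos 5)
There is a genuine gap in the first stage of your proposal: you treat the matching condition $F_{A_1}\mid_{\partial M_0\backslash\Gamma_0} = F_{A_2}\mid_{\partial M_0\backslash\Gamma_0}$ as achievable by a soft Riemann--Hilbert argument, but this is precisely where the Cauchy data hypothesis \eqref{main assumption} must enter, and without it the matching is simply false. After fixing any solutions $F_{A_j}$ of $\bar\pl F_{A_j}=iA_jF_{A_j}$ with $|F_{A_j}|=1$ on $\Gamma_0$, the residual gauge freedom is multiplication by a nonvanishing holomorphic $G$ on $M_0$ with $|G|=1$ on $\Gamma_0$. To match on $\partial M_0\backslash\Gamma_0$ you would need such a $G$ with $G\mid_{\partial M_0\backslash\Gamma_0}=(F_{A_2}/F_{A_1})\mid_{\partial M_0\backslash\Gamma_0}$. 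This is not a Plemelj-type jump problem but a \emph{one-sided} boundary problem: you are prescribing $G$ on one arc and constraining its modulus on the complementary arc, and a generic smooth trace on $\partial M_0\backslash\Gamma_0$ admits no holomorphic continuation satisfying both. For arbitrary $X_1,X_2$ satisfying only \eqref{boundary normal assumption} there is no such $G$. The paper's Proposition \ref{boundary holomorphic} is exactly the statement that the ratio $e^{i(\alpha_1-\alpha_2)}\mid_{\partial M_0'\backslash\Gamma'}$ \emph{does} extend to a nonvanishing holomorphic function unitary on $\Gamma'$, and its proof (via Lemma \ref{antimeromorphic statement}) goes through CGO solutions, the boundary integral identity \eqref{boundary integral id in boundary holomorphic}, and the meromorphic boundary-value characterization on the doubled surface (Lemma \ref{perpendicular to all conjugate even}) --- all of which crucially use \eqref{main assumption}. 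Your plan postpones \eqref{main assumption} to the second stage, and this makes the first stage unprovable as written.

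A secondary inaccuracy: you say the doubling is performed ``across the arc $\partial M_0\setminus\bar\Gamma_0$.'' In the paper the reflection $R$ fixes $\partial M$ and the relevant extension is across $\Gamma_0$ (resp.\ $\Gamma'$), that is, precisely where the $\alpha_j$ are real and $|F_{A_j}|=1$. It is the unitarity on $\Gamma_0$ that makes $F_{1,2}$ (the reflected version of the ratio) a $W^{2,p}$ function on $\dot M^D$, which is the input to the meromorphic-extension lemma. You also implicitly need the corresponding antiholomorphic factors $F_{\bar A_j}$ to satisfy the analogous matching on $\partial M_0\backslash\Gamma_0$ (Corollary \ref{same boundary condition for conjugation factor}), since the paper's reduction to the first-order system \eqref{conjugated system} uses both $F_{A_j}$ and $F_{\bar A_j}$ to conjugate away the first-order terms; your sketch of the integral identity omits this. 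Your overall factorization of $L_{X,V}$ and the route to the bulk identity via integration by parts are reasonable and roughly parallel to the paper's first-order-system argument, so the second stage is salvageable once the first stage is repaired.
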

Note that as both solutions are differentiated only by $\bar\pl$ we can then construct CGO (in Section 5) which are compatible with this differential operator so that the difficulty of the phase function appearing in the integrand would not occur. Arriving at \eqref{new boundary integral equation} requires one to see how assumption \eqref{main assumption} leads to the existence of a holomorphic extension of the function ${F_{A_1}}{F_{A_2}}^{-1}\mid_{\partial M_0\backslash\Gamma}$ for any non-vanishing solutions of $F_{A_j}^{-1}\bar\pl F_{A_j} =i A_j$. This is achieved by considering the double of Riemann surfaces and exploit the symmetry of the holomorphic extension problem under reflection. 

The second difficulty, the one of "shifting" the Carleman estimate, will be treated again by using the reflection principle. In this case we double the bordered Riemann surface and extend the harmonic Carleman weight with reflection principle. On the doubled surface we "shift" the Carleman estimate with the semiclassical pseudodifferential operator $\langle hD\rangle^{-1}$ as in \cite{DKSU}. We then use symmetry to see that this shift operation on the doubled surface actually leaves a large portion of the original boundary intact.

In addition to highlighting the geometric nature of this problem, the approach outlines here allows one to extending the setting of \cite{IUY2} to general surfaces. Furthermore, the program described here can be applied to study a wide range of inverse problems involving the connection Laplacian. In a series of forthcoming articles we will use the approach outlined here to treat:
\begin{enumerate}
\item The partial Cauchy data problem for the Hodge Laplacian on surfaces (see \cite{chung salo tzou} for the higher dimensional case), \item The partial Cauchy data problem for Dirac systems (the full data case was considered in \cite{albin guillarmou tzou}), \item Inverse scattering on surfaces in the presence of magnetic potentials (the special case when $X_1 = X_2 =0$ was considered in \cite{GST}). 
\end{enumerate}
The systematic approach developed here will facilitate future discussions which naturally follow the identifiability result we prove - that of stability, analytic reconstruction, and numerical reconstruction.  
\end{section}
\begin{section}{Harmonic and Holomorphic Morse Functions on a Riemann Surface}

\subsection{Riemann surfaces}
We start by recalling few elementary definitions and results about Riemann surfaces, see for instance \cite{FK} for more details.
Let $(M,g)$ be a compact connected smooth Riemannian surface with boundary $\pl M$. The surface $M$ can be considered as 
a subset of a compact Riemannian surface, for instance by taking the double of $M$.

The conformal class of $g$ on the closed surface $M$ induces a structure of closed Riemann surface, i.e. a closed surface equipped with a complex structure via holomorphic charts $z_\alpha:U_{\alpha}\to \cc$. The Hodge star operator $\star$ acts on the cotangent bundle $T^*M$, its eigenvalues are 
$\pm i$ and the respective eigenspace $T_{1,0}^*M:=\ker (\star+i{\rm Id})$ and $T_{0,1}^*M:=\ker(\star -i{\rm Id})$
are sub-bundle of the complexified cotangent bundle $\cc T^*M$ and the splitting $\cc T^*M=T^*_{1,0}M\oplus T_{0,1}^*M$ holds as complex vector spaces.
Since $\star$ is conformally invariant on $1$-forms on $M$, the complex structure depends only on the conformal class of $g$.
In holomorphic coordinates $z=x+iy$ in a chart $U_\alpha$,
one has $\star(udx+vdy)=-vdx+udy$ and
\[T_{1,0}^*M|_{U_\alpha}\simeq \cc dz ,\quad T_{0,1}^*M|_{U_\alpha}\simeq \cc d\bar{z}  \]
where $dz=dx+idy$ and $d\bar{z}=dx-idy$. We define the natural projections induced by the splitting of $\cc T^*M$ 
\[\pi_{1,0}:\cc T^*M\to T_{1,0}^*M ,\quad \pi_{0,1}: \cc T^*M\to T_{0,1}^*M.\]
The exterior derivative $d$ defines the De Rham complex $0\to \Lambda^0\to\Lambda^1\to \Lambda^2\to 0$ where $\Lambda^k:=\Lambda^kT^*M$
denotes the real bundle of $k$-forms on $M$. Let us denote $\cc\Lambda^k$ the complexification of $\Lambda^k$, then
the $\pl$ and $\bar{\pl}$ operators can be defined as differential operators 
$\pl: \cc\Lambda^0\to T^*_{1,0}M$ and $\bar{\pl}:\cc\Lambda_0\to T_{0,1}^*M$ by 
\begin{equation}\label{defddbar}
\pl f:= \pi_{1,0}df ,\quad \bar{\pl}:=\pi_{0,1}df,
\end{equation}
they satisfy $d=\pl+\bar{\pl}$ and are expressed in holomorphic coordinates by
\[\pl f=\pl_zf\, dz ,\quad \bar{\pl}f=\pl_{\bar{z}}f \, d\bar{z}.\]  
with $\pl_z:=\demi(\pl_x-i\pl_y)$ and $\pl_{\bar{z}}:=\demi(\pl_x+i\pl_y)$.
Similarly, one can define the $\pl$ and $\bar{\pl}$ operators from $\cc \Lambda^1$ to $\cc \Lambda^2$ by setting 
\[\pl (\omega_{1,0}+\omega_{0,1}):= d\omega_{0,1}, \quad \bar{\pl}(\omega_{1,0}+\omega_{0,1}):=d\omega_{1,0}\]
if $\omega_{0,1}\in T_{0,1}^*M$ and $\omega_{1,0}\in T_{1,0}^*M$.
In coordinates this is simply
\[\pl(udz+vd\bar{z})=\pl v\wedge d\bar{z},\quad \bar{\pl}(udz+vd\bar{z})=\bar{\pl}u\wedge d{z}.\]
There is a natural operator, the Laplacian acting on functions and defined by 
\[\Delta f:= -2i\star \bar{\pl}\pl f =d^*d \]
where $d^*$ is the adjoint of $d$ through the metric $g$ and $\star$ is the Hodge star operator mapping 
$\Lambda^2$ to $\Lambda^0$ and induced by $g$ as well.

\subsection{Maslov Index and Boundary value problem for the $\overline\partial$ Operator}
In this subsection we consider the setting where $M$ is an oriented Riemann surface with boundary $\partial M$ and $M_0'$ is a submanifold of $M$ such that $\partial M\cap \partial M_0' \neq \emptyset$. Denote by $\Gamma_0' \subset \partial M$ an open subset of $\partial M$ which compactly contains $\partial M\cap \partial M_0'$.  We assume in addition that $\partial M \backslash \bar\Gamma_0'$ contains an open set.
 
Following \cite{mcduff} (see also \cite{duke}), we adopt the following notations:
let $E\to M$ be a complex line bundle with complex structure $J : E\to E$ and let $D : C^{\infty}(M,E) \to C^\infty(M,T^*_{0,1}\otimes E)$ be a Cauchy-Riemann operator with smooth coefficients on $M$, acting on sections of the bundle $E$. Observe that in the case when $E=M\x \cc$ is the trivial line bundle with the natural complex structure on $M$, then $D$ can be taken to be the operator $\overline\partial$ introduced in \eqref{defddbar}. 
For $q>1$, we define
\[D_F : W^{\ell,q}_F(M,E) \to W^{\ell-1,q}(M, T_{0,1}^*M\otimes E)\]
where $F\subset E\mid_{\pl M}$ is a totally real subbundle (i.e. a subbundle such that $JF \cap F$ is the zero section) 
and $D_F$ is the restriction of $D$ to the $L^q$-based Sobolev space with $\ell$ derivatives and boundary condition $F$
\[W^{\ell,q}_F(M,E) := \{\xi\in W^{\ell,q}(M,E) \mid \xi(\partial M) \subset F\}.\]
The boundary Maslov index for a totally real subbundle $F\subset E_{\pl M}$ of a complex vector bundle 
is defined in generality in Appendix C.3 of \cite{mcduff}, we only recall the definition in our setting 
\begin{definition}\label{maslovindexdef}
Let $E=M\x \cc$ and $\pl M=\sqcup_{j=1}^m \pl_i M$ be a disjoint union of $m$
circles. The boundary Maslov index $\mu(E,F)$ is the degree of the map $\rho \circ \Lambda:\pl M\to \pl M$ where 
\[\Lambda|_{\pl_i M}: S^1\simeq \pl_i M\to{\rm GL}(1,\cc)/{\rm GL}(1,\rr)\] 
is the natural map assigning to $z\in S^1$ the totally real subspace $F_z\subset \cc$,
where ${\rm GL}(1,\cc)/{\rm GL}(1,\rr)$ is the space of totally real subbundles of $\cc$, and 
$\rho: {\rm GL}(1,\cc)/{\rm GL}(1,\rr) \to S^1$ is defined by $\rho(A.{\rm GL}(1,\rr)):=A^2/|A|^2$.
\end{definition}

In this setting, we have the following boundary value Riemann-Roch theorem stated in \cite{mcduff}:
\begin{theorem}
\label{boundaryrr}
Let $E \to M$ be a complex line bundle over an oriented compact Riemann surface with boundary and $F\subset E\mid_{\pl M}$ be a totally real subbundle.  Let $D$ be a smooth Cauchy-Riemann operator on $E$ acting on $W^{\ell,q}(M,E)$ for some $q>1$ 	and $\ell\in\nn$. 
Then\\
1) The following operators are Fredholm 
\[D_F : W^{\ell,q}_F(M, E) \to W^{\ell-1,q}(M,T_{0,1}^*M\otimes E)\]
\[D_F^* : W^{\ell,q}_F(M, T_{0,1}^*M\otimes E) \to W^{\ell-1,q}(M, E).\]
2) The real Fredholm index of $D_F$ is given by
\[{\rm Ind}(D_F) = \chi(M) + \mu(E,F)\]
where $\chi(M)$ is the Euler characteristic of $M$ and $\mu(E,F)$ is the boundary Maslov index of the subbundle $F$.\\
3) If $\mu(E,F) <0$, then  $D_F$ is injective, while if  $\mu(E,F) + 2\chi(M) >0$ the operator $D_F$ is surjective.
\end{theorem}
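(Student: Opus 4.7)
The plan is to establish the three parts of the theorem in sequence, drawing on standard techniques from the analytic theory of Cauchy--Riemann operators on bordered Riemann surfaces, together with the homotopy theory of totally real boundary conditions.

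For part (1), the Fredholm statement, I would first verify that $D$ is a first-order elliptic operator (its principal symbol is the standard $\bar\partial$-symbol, up to twist by $E$) and that the totally real boundary condition $F\subset E|_{\partial M}$ satisfies the Shapiro--Lopatinski ellipticity condition. This is a local check in a boundary collar: after a trivialization of $E$ compatible with $F$, one rewrites $D_F$ as $\bar\partial$ acting on functions on the half-disk with boundary values lying in a real line in $\mathbb{C}$, and inspects the model problem on the half-plane to confirm that the boundary projection is an isomorphism on the appropriate symbol space. Once Shapiro--Lopatinski is in place, the standard elliptic boundary regularity theory on the $L^q$-Sobolev scale for $q>1$ (with the compact inclusion $W^{\ell,q} \hookrightarrow W^{\ell-1,q}$) furnishes parametrices and hence Fredholmness of $D_F$. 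The formal adjoint $D_F^*$ fits the same framework, with the adjoint totally real subbundle $F^\perp$ defined through the natural pairing, so it is Fredholm by the same argument.

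For part (2), the index formula, my strategy is a homotopy invariance argument. The Fredholm index ${\rm Ind}(D_F)$ depends only on the connected component of $(D,F)$ in the space of Cauchy--Riemann operators with totally real boundary conditions on $(E,F)$; since any two Cauchy--Riemann structures on a fixed line bundle are homotopic through Cauchy--Riemann operators (the space being affine modulo a zeroth order term) and since totally real subbundles of a fixed topological type form a connected space, both ${\rm Ind}(D_F)$ and the integer $\chi(M)+\mu(E,F)$ are invariants of the pair $(E,F)$ up to isomorphism. It therefore suffices to verify the identity in model cases. I would take as base case the closed disk $M = \mathbb{D}$ with the trivial line bundle $E = \mathbb{D}\times \mathbb{C}$, the standard $\bar\partial$, and the family of totally real boundary conditions $F_k = \{ e^{ik\theta/2}\mathbb{R}\}_{\theta\in S^1}$, whose Maslov index is $k$. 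Here the kernel and cokernel can be computed explicitly by Fourier series on the circle, yielding ${\rm Ind} = 1+k = \chi(\mathbb{D})+\mu(E,F_k)$. General $(M,E,F)$ are then obtained from disks by a gluing construction, and both sides of the index formula are additive under this gluing.

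For part (3), the vanishing statements, the key input is a positivity-of-intersection principle: a nonzero element $\xi$ of $\ker D_F$ is a solution of an elliptic first-order equation with totally real boundary values, and its interior zeros are isolated with positive local multiplicities that contribute $+2$ each to the winding of $\xi$ along $\partial M$ relative to $F$, while boundary zeros contribute nonnegatively. Consequently $\mu(E,F)$ equals a sum of nonnegative integers, so $\mu(E,F)<0$ forces $\ker D_F = 0$. Surjectivity when $\mu(E,F)+2\chi(M)>0$ is obtained by applying the injectivity part to $D_F^*$, using the fact that the annihilator boundary condition $F^\perp$ has Maslov index equal to $-\mu(E,F)-2\chi(M)$ after incorporating the canonical bundle factor arising from $T^*_{0,1}M$. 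The main obstacle will be the careful local normal form argument underlying this positivity statement, especially at boundary zeros where one must verify that the real structure indeed forces a positive Maslov contribution; a secondary point of care is tracking the adjoint conventions so that the sign in the duality formula between $\mu(E,F)$ and the Maslov index of the adjoint boundary condition comes out correctly.
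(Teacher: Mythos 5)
The paper does not prove Theorem~\ref{boundaryrr}: it is quoted directly from McDuff--Salamon (Appendix C.3 of \cite{mcduff}, the boundary Riemann--Roch theorem), so there is no in-paper proof to compare against; the relevant standard is whether your sketch tracks the proof in that reference. At a high level it does: elliptic boundary regularity (Shapiro--Lopatinski in a collar) for Fredholmness; homotopy invariance of the index plus a model computation; and a positivity-of-zeros argument for the injectivity/surjectivity dichotomy, with surjectivity deduced from injectivity of the adjoint via the identity $\mu(T^*_{0,1}M\otimes E, F^\perp) = -\mu(E,F)-2\chi(M)$. Your adjoint-Maslov bookkeeping is consistent with $\mathrm{Ind}(D_F^*)=-\mathrm{Ind}(D_F)$, so that part is sound.

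The one place I would push back is the index-additivity step. You propose to reduce to disks ``by a gluing construction'' under which ``both sides of the index formula are additive.'' For surfaces with boundary this is not automatic: cutting $M$ into disks creates new boundary arcs on which a totally real boundary condition must be chosen, and the Maslov index of $F$ is not simply the sum of the Maslov indices of the pieces without a careful matching convention; the Euler characteristic also changes nonadditively when you cut along arcs meeting $\partial M$. You would need to make the cut-and-paste rule precise and prove a gluing theorem for the analytic index that respects it. McDuff--Salamon avoid this by a different reduction: they double the bordered surface across $\partial M$ (exactly the construction this paper uses later in Section~\ref{sec:double}), transport $D_F$ to a Cauchy--Riemann operator on the closed double, and invoke the closed-surface Riemann--Roch theorem $\mathrm{ind}=\chi(M^D)+2c_1$; the Maslov index of $F$ is then read off as a first Chern number of the glued bundle. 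If you want a self-contained argument, the doubling route is cleaner and sidesteps the bookkeeping your gluing sketch would require. The remaining acknowledged gap, the local normal form at boundary zeros underlying part (3), is indeed the other technically demanding point; it rests on a Carleman-type unique continuation and a Taylor expansion of a solution near a boundary zero showing that the leading term is $c z^k$ with $c$ real relative to $F$, contributing $k\ge 0$ to the winding.
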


As an application, we obtain the following (here and in what follows, $H^m(M):=W^{m,2}(M)$):
\begin{prop}\label{surject}
(i) For $q>1$ and $k\in \nn_0$, there exists a bounded operator \[\bar\partial^{-1} : W^{k,q}(M,T^{*}_{0,1}M) \to \{u\in W^{k+1,q}(M) \mid u\mid_{\Gamma'_0} \in \R\}\] satisfying $\bar\partial \bar\partial^{-1} = Id$.\\
(ii) If $\chi \in C_0^\infty(M)$ is supported in a complex charts $U$ 
bi-holomorphic to a bounded open set $\Omega\subset \cc$ with complex coordinate $z$,  then as operators
\[\bar{\pl}^{-1}\chi ={\chi}' \bar{T} \chi+K\]
where $\chi' \in C_0^\infty(U)$ are such that $\chi'\chi=\chi$,  $K$ has a smooth 
kernel on $M\x M$ and $\bar{T}$ is given in the complex coordinate $z\in U\simeq \Omega$ by 
\[\bar{T}(fd\bar{z})=\frac{1}{\pi}\int_{\cc}\frac{f(z')}{z-z'}dz_1'dz_2'\]
where $dv_g(z)=\alpha^2(z)dz_1dz_2$ is the volume form of  $g$ in the chart.
\\
(iii) For $m> 1/2$, let $f\in H^{m}(\pl M)$ be a real valued function, then there exists a holomorphic function $v\in H^{m+\demi}(M)$ such that
${\rm Re}(v)|_{\Gamma_0'}=f$. Furthermore, $v$ can be chosen so that $\|v\|_{H^{m + \demi}(M)} \leq C_{m} \|f\|_{H^m(M)}$.\\
(iv) For $k\in\nn$ and $q>1$, the space of $W^{k,q}(M)$ holomorphic functions on $M$ which are real valued on $\Gamma_0'$ is infinite dimensional.
\end{prop}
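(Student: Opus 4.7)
The plan for (i) is to apply the boundary Riemann--Roch theorem (Theorem \ref{boundaryrr}) to the trivial bundle $E = M\times\cc$ with a carefully chosen totally real boundary condition $F\subset E|_{\pl M}$. I would take $F_z = \R\subset\cc$ for every $z\in\Gamma_0'$ and, on the open complement $\pl M\setminus\bar\Gamma_0'$, smoothly interpolate to $F_z = e^{i\theta(z)}\R$ with $\theta$ winding enough times to force $\mu(E,F) + 2\chi(M) > 0$. Theorem \ref{boundaryrr}(3) then makes $\bar\pl_F\colon W^{k+1,q}_F(M,E)\to W^{k,q}(M,T^*_{0,1}M)$ surjective, and any bounded right inverse defines $\bar\pl^{-1}$, whose image lies inside $W^{k+1,q}_F\subset\{u : u|_{\Gamma_0'}\in\R\}$ by construction. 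Part (ii) is then a standard $\bar\pl$-parametrix computation: in a holomorphic chart the Cauchy transform $\bar T$ is a fundamental solution on $\cc$, so $\chi'\bar T\chi$ is a local right inverse, and the error $K := \bar\pl^{-1}\chi - \chi'\bar T\chi$ satisfies $\bar\pl K$ supported on $\supp(\bar\pl\chi')\cap\supp(\chi) = \emptyset$ (using $\chi'\chi = \chi$); elliptic regularity of $\bar\pl$ then forces $K$ to be smoothing.

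For (iii) the plan is to reduce directly to (i). Pick any real extension $\tilde f \in H^{m+\demi}(M)$ of $f$ via the trace theorem and set
\[
v := \tilde f + i\,\bar\pl^{-1}(i\,\bar\pl\tilde f),
\]
where $\bar\pl^{-1}$ is the operator from (i), extended to the appropriate fractional Sobolev scale by interpolation. Then $\bar\pl v = \bar\pl\tilde f + i\cdot i\,\bar\pl\tilde f = 0$, so $v$ is holomorphic; and since part (i) guarantees $\bar\pl^{-1}(i\,\bar\pl\tilde f)|_{\Gamma_0'}\in\R$, the correction $i\,\bar\pl^{-1}(i\,\bar\pl\tilde f)$ is purely imaginary on $\Gamma_0'$, yielding $\re v|_{\Gamma_0'} = \tilde f|_{\Gamma_0'} = f$. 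The bound $\|v\|_{H^{m+\demi}(M)} \le C_m \|f\|_{H^m(\pl M)}$ follows from continuity of the trace/extension map and of $\bar\pl^{-1}$.

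Part (iv) is a consequence of (iii) together with a Fredholm/perturbation correction. Applied to a linearly independent sequence of real $f_n \in H^m(\pl M)$ supported in $\Gamma_0'$, (iii) produces holomorphic $v_n$ with $\re v_n|_{\Gamma_0'} = f_n$, and a second application of (iii) to absorb the residual $\im v_n|_{\Gamma_0'}$ (inverting a compact perturbation of the identity on $H^m(\Gamma_0')$) yields an infinite-dimensional family of holomorphic functions genuinely real-valued on $\Gamma_0'$; Sobolev embedding delivers the corresponding $W^{k,q}$ family. The only step that requires genuine care is the Maslov-index bookkeeping in (i), where $F$ must agree with $\R$ on $\Gamma_0'$ while winding sufficiently on the open complement to surpass the surjectivity threshold $\mu(E,F) + 2\chi(M) > 0$. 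The hypothesis that $\pl M\setminus\bar\Gamma_0'$ contains an open set is exactly what allows such a modification; once arranged, the remaining parts follow mechanically from (i) together with the Cauchy transform and the trace/extension theorem.
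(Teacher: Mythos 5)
Parts (i) and (iii) are essentially the paper's argument: in (i) you set up the totally real subbundle $F$ with $F=\R$ on $\Gamma_0'$ and winding on the open complement exactly as the paper does, and the formula $v=\tilde f+i\,\bar\pl^{-1}(i\,\bar\pl\tilde f)$ you write in (iii) is literally the paper's $v=iR+w$ with $R=\bar\pl^{-1}(i\bar\pl w)$.

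For (ii) you take a different route, but the last step is too quick. What is cheap to check is that $\bar\pl K=-(\bar\pl\chi')\bar T\chi$, whose Schwartz kernel lives on $\supp(\bar\pl\chi')\times\supp\chi$ and so avoids the diagonal, hence $\bar\pl K$ has a smooth kernel. Concluding that $K$ itself has a smooth kernel is not plain interior elliptic regularity: $K\omega$ inherits a boundary condition from $\bar\pl^{-1}$, so you would need the full elliptic boundary-regularity theory for $\bar\pl_F$, together with a parametrix argument to pass from smoothness of $K\omega$ for each fixed $\omega$ to smoothness of the kernel of $K$ jointly in both variables. The paper avoids all of this by computing $K$ explicitly: from $\bar\pl^{-1}\bar\pl=1-\Pi$ with $\Pi$ a finite-rank projection onto smooth holomorphic sections, one gets
\[\bar\pl^{-1}\chi=\chi'\bar T\chi-\Pi\chi'\bar T\chi-\bar\pl^{-1}[\bar\pl,\chi']\bar T\chi,\]
and the two correction terms are smoothing by inspection (finite rank in smooth functions, respectively $\bar\pl^{-1}$ applied to a smooth-kernel operator).

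The genuine gap is in (iv). You assert that the correction $f\mapsto f+\im w_f|_{\Gamma_0'}$ (with $w_f$ the holomorphic function from (iii) absorbing $\im v_f|_{\Gamma_0'}$) is the identity plus a compact operator on $H^m(\Gamma_0')$. There is no reason for this: $f\mapsto\im w_f|_{\Gamma_0'}$ is a composition of bounded maps with no gain in Sobolev order — holomorphic extension followed by taking a boundary trace is an order-zero, Hilbert-transform-type operation — so it is not compact, and without compactness the Fredholm alternative does not apply. Nothing then prevents the corrected functions from collapsing to a finite-dimensional family. The paper's argument is both simpler and complete, and it is a free corollary of your own work in (i): with the same subbundle $F$, once $2\chi(M)+2L>0$ Theorem~\ref{boundaryrr} makes $D_F$ surjective and so $\dim\ker D_F=\chi(M)+\mu(E,F)=\chi(M)+2L$. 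Every element of $\ker D_F$ is a $W^{k,q}$ holomorphic function which is real on $\Gamma_0'$, precisely because $F|_{\Gamma_0'}=\R$. Letting $L\to\infty$ exhibits subspaces of arbitrarily large dimension, hence the space in question is infinite-dimensional. There is nothing to invert; the index count you already did does the job.
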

\noindent{\bf Proof}. (i) Let $L\in\nn$ be arbitrary large and let us identify the boundary as a disjoint union of circles $\partial M= \coprod_{i = 1}^m \pl_iM$ where each $\pl_iM \simeq S^1$. Since $\Gamma_0'$ can be chosen so that $\partial M \backslash\Gamma_0'$ is as small as we like, it is sufficient to assume that $\partial M \backslash\Gamma_0'$  is a connected non-empty open segment of $\pl_1M = S^1$, and which can thus be defined in a coordinate $\theta$ (respecting the orientation of the boundary) by 
$\partial M \backslash\Gamma_0'= \{\theta \in S^1 \mid 0< \theta< 2\pi/k\}$ for some integer $k$.
Define the totally real subbundle of $F\subset E|_{\partial M} = \coprod_{j = 1}^m (\pl_jM \times \C)$ by the following: on 
$\pl_1M \simeq S^1$ parametrized by $\theta \in [0,2\pi]$, define $F_\theta = e^{ia(\theta)}\R\subset \cc$, 
where $a: [0,2\pi] \to \R$ is a smooth nondecreasing function such that $a(\theta) = 0$ in a neighbourhood $[0,\eps]$ of $0$, $a(2\pi/k) = 2L\pi$ 
for some $L\in\nn$, and $a(\theta) = 2L\pi$ for all $\theta > 2\pi/k$. In particular $F_z=\rr$ is constant for $z\notin \partial M \backslash\Gamma_0' $. 
For the rest of $\pl_2M,.., \pl_m M$, we  just let $F|_{\pl_iM} = S^1 \times \R$. 
The map $\Lambda$ in Definition \ref{maslovindexdef} is then given on $\pl_1M$ by $\Lambda(e^{i\theta})=
e^{ia(\theta)}{\rm GL}(1,\rr)$ and on $\pl_2M,\dots,\pl_mM$ by $\Lambda(e^{i\theta})=
e^{i\theta}{\rm GL}(1,\rr)$, therefore the Maslov index $\mu(E,F)$ is given by the degree of the map 
$e^{i\theta}\to e^{2ia(\theta)}$  on $S^1$, and this is given by $(a(2\pi)-a(0))/2\pi=2L$.
By theorem \ref{boundaryrr}, $D_F$ is surjective if $2\chi(M)+2L>0$. Since $L$ can be taken 
as large as we want this establishes the solvability assertion of (i).

To obtain the estimate, we fix $L$ large enough so that $2\chi(M) + 2L >0$ and consider the splitting given by $W^{k+1,q}(M) = \ker D_F + (\ker D_F)^{\perp}$. By taking a projection one sees that for all $\omega\in W^{k,q}(M)$ there exists a unique element $u\in (\ker D_F)^\perp$ such that $\bar\partial u = \omega$. Therefore we conclude that $D_F: (\ker D_F)^\perp \to W^{k,q}(M,T^{*}_{0,1}M)$ is a linear bijection and the uniform boundedness principle gives the desired estimate.\\

(ii) Observe that $\bar{\pl}^{-1}\bar{\pl}-1$ maps $W^{k,p}_F(M)$ into $\ker \bar{\pl}\cap W^{k,p}_F(M)$ which is a finite dimensional space
spanned by some smooth functions $\psi_1,\dots \psi_n$ (by elliptic regularity) on $M$. Assuming that $(\psi_j)_j$ is an orthonormal basis in $L^2$, this implies that, on $W^{1,2}_F(M)$
\[\bar{\pl}^{-1}\bar{\pl}=1-\Pi \, \textrm{ where }\, \Pi= \sum_{k=1}^n\psi_k\cjg\cdot,\psi_k\cjd_{L^2(M)}.\]
Now we also have
\[\bar{\pl}\chi'\bar{T}\chi= \chi+[\bar{\pl},\chi']\bar{T}\chi\]
and the last operator on the right has a smooth kernel in view of $\chi\nabla\chi'=0$ 
and the fact that $T$ has a smooth kernel outside the diagonal $z=z'$. 
Now since $\chi'\in C_0^\infty(M)\subset W^{1,2}_F(M)$, we can multiply by $\bar{\pl}^{-1}$ on the left of the last identity 
and obtain 
\[\bar{\pl}^{-1}\chi=\chi'\bar{T}\chi-\Pi\chi'\bar{T}\chi- \bar\pl^{-1}[\bar{\pl},\chi']\bar{T}\chi.\]  
The last two operator on the right have a smooth kernel on $M\x M$, in view 
of the smoothness of $\psi_k$ and the kernel of $[\bar{\pl},\chi']\bar{T}\chi$, 
and since $\bar\pl^{-1}$ maps $C_0^{\infty}(M,T_{0,1}^*M)$ to $C^{\infty}(M)$.\\

(iii) Let $w\in H^{m+\demi}(M)$ be a real function with boundary value $f$ on $\pl M$, then by (i) there exists $R\in H^{m+1/2}(M)$ with $\|R\|_{H^{m+\demi}(M)} \leq C \|w\|_{H^{m+\demi}(M)} \leq C \|f\|_{H^m(\partial M)}$ such that $i\bar{\pl}R=-\bar{\pl}w$ and $R$ purely real on $\Gamma_0'$, thus $v:=iR+w$ is holomorphic such that ${\rm Re}(v)=f$ on $\Gamma_0'$.\\ 

(iv) Taking the subbundle $F$ as in the proof of (i), we have that $\dim \ker D_F=\chi(M)+2L$ if $L$ satisfies $2\chi(M)+2L>0$, 
and since $L$ can be taken as large as we like, this concludes the proof. 
\qed

\begin{lemma}
\label{control the zero}
Let $\{p_0, p_1,..,p_n\}\subset M$ be a set of $n+1$ disjoint points. Let $c_1,\dots, c_K\in\cc$, $N\in\nn$, 
and let $z$ be a complex coordinate near $p_0$ such that $p_0=\{z=0\}$. 
Then if $p_0\in{\rm int}(M)$, there exists a holomorphic function $f$ on $M$ with zeros of order at least $N$ at each $p_j$,
such that $f$ is real on $\Gamma_0'$ and $f(z)=c_0 + c_1z +...+ c_K z^K+ O(|z|^{K+1})$ in the coordinate $z$. If 
$p_0\in \pl M$, the same is true except that $f$ is not necessarily real on $\Gamma_0'$.
\end{lemma}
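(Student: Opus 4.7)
The plan is to use the boundary Riemann--Roch theorem (Theorem \ref{boundaryrr}) together with a twist by the divisor of prescribed vanishings, and to conclude surjectivity of the jet map by a dimension count.

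Assume first $p_0 \in {\rm int}(M)$. Fix $L \in \nn$ (to be chosen large) and let $F = F_L \subset E|_{\partial M}$ be the totally real subbundle of the trivial bundle $E = M \times \cc$ constructed in the proof of Proposition \ref{surject}(i), so that $F_L|_{\Gamma_0'} = \rr$ and $\mu(E,F_L) = 2L$. The kernel of the associated Cauchy--Riemann operator $D_{F_L}$ consists of holomorphic functions on $M$ which are real on $\Gamma_0'$. Let $D := (K+1)p_0 + N(p_1 + \cdots + p_n)$ and consider the line bundle $\mc{O}(-D)$, whose holomorphic sections are precisely those holomorphic functions on $M$ vanishing to order $\geq K+1$ at $p_0$ and $\geq N$ at each $p_j$ with $1 \leq j \leq n$. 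Since $\supp D \subset {\rm int}(M)$, the natural inclusion $\mc{O}(-D) \hookrightarrow \mc{O}$ is an isomorphism over $\partial M$, so $F_L$ lifts canonically to a totally real subbundle of $\mc{O}(-D)|_{\partial M}$. The Maslov index of this pair is computed via Definition \ref{maslovindexdef} applied to a holomorphic trivializing section of $\mc{O}(-D)$ with divisor exactly $D$, which by the argument principle has winding $+\deg D$ around $\partial M$; this gives $\mu(\mc{O}(-D),F_L) = 2L - 2T$ where $T := K+1+nN$.

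Applying Theorem \ref{boundaryrr}, whenever $L > T - \chi(M)$ the surjectivity criterion $\mu + 2\chi(M) > 0$ is satisfied for both $D_{F_L}$ on $E$ and on $\mc{O}(-D)$, so the real-dimensional kernels have dimensions $\chi(M)+2L$ and $\chi(M)+2L-2T$ respectively. The $\rr$-linear jet map
\[
J : \ker D_{F_L} \longrightarrow \cc^{K+1} \oplus \bigoplus_{j=1}^n \cc^N,
\]
extracting the $K$-jet at $p_0$ and the $(N-1)$-jet at each $p_j$, has kernel equal to $\ker D_{F_L}$ on $\mc{O}(-D)$; hence
\[
\dim_\rr {\rm Range}(J) = (\chi(M)+2L) - (\chi(M)+2L-2T) = 2T,
\]
matching the real dimension of the target. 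Thus $J$ is $\rr$-surjective, and any $f \in \ker D_{F_L}$ with $J(f) = (c_0,c_1,\ldots,c_K;0,\ldots,0)$ is the desired holomorphic function.

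If $p_0 \in \partial M$, the reality condition at $p_0$ is incompatible with prescribing arbitrary complex $c_k$, so it is dropped. The same argument goes through using classical Riemann--Roch on the closed double $\hat M$: a divisor of sufficiently large degree supported outside $M \cup \{p_0,\ldots,p_n\}$ furnishes a space of global meromorphic functions on $\hat M$ whose restricted jet map at $\{p_0,\ldots,p_n\}$ is surjective, producing the required $f$ holomorphic on $M$. The main technical obstacle is justifying the Maslov index identity $\mu(\mc{O}(-D),F_L) = \mu(E,F_L) - 2\deg D$ for interior divisors; once this is established, everything else is a short dimension count on top of the machinery already developed in Proposition \ref{surject}.
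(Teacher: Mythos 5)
Your approach is genuinely different from the paper's, and in the purely interior case it is both correct and cleaner. The paper first reduces, by induction on $K$ and linearity, to prescribing only the top coefficient $c_K$ with $c_0=\dots=c_{K-1}=0$; it then finds (by linear algebra on the jet maps and the dimension count $\dim\ker D_F = \chi(M)+2L$) a nonzero $u\in\ker D_F$ vanishing to high order at every $p_j$, and corrects its leading coefficient at $p_0$ by multiplying with a meromorphic factor $R_K+r_K$ whose $\bar\pl$--equation is solved via Proposition \ref{surject}(i). You instead twist the trivial bundle by the divisor $D=(K+1)p_0+N(p_1+\dots+p_n)$, apply Theorem \ref{boundaryrr} to both $E$ and $\mc{O}(-D)$ to get exact kernel dimensions once $L$ is large enough to force surjectivity, and conclude by rank--nullity that the joint jet map is onto $\cc^{K+1}\oplus\bigoplus\cc^N$. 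That avoids the induction entirely and handles all $K+1$ coefficients at once; the price is that you must invoke the Maslov index twist formula $\mu(\mc{O}(-D),F_L)=\mu(E,F_L)-2\deg D$, which you correctly flag as the key technical input (it is Theorem C.3.5 / Appendix C of McDuff--Salamon, so this is a citable fact rather than a real gap, but it should be cited).

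The more substantive issue is the hypothesis $\supp D\subset{\rm int}(M)$ on which your whole computation rests. The lemma places no such restriction on $p_1,\dots,p_n$, and the paper does invoke it with boundary points (e.g.\ the critical points of $\Phi$, some of which may lie on $\pl M_0$ by Proposition \ref{criticalpoints}). If $p_j\in\pl M$ the line bundle $\mc{O}(-p_j)$ does not restrict canonically to $E|_{\pl M}$, so $F_L$ does not lift and the twist formula does not apply as stated; moreover, when $p_j\in\Gamma_0'$ the reality constraint forces the $N$-jet at $p_j$ to land in $\rr^N$ rather than $\cc^N$, so the count ``vanishing to order $N$ costs $2N$ real dimensions'' is off by a factor of two there. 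The paper's proof sidesteps all of this because the jet map is simply a linear map into $\cc^{(n+1)N}$ and the argument only needs a nonzero element in its kernel, regardless of where the $p_j$ sit. You can repair your argument either by first enlarging $M$ to a slightly larger surface $M'$ so that all of $\{p_0,\dots,p_n\}$ become interior (which is also how the paper handles the $p_0\in\pl M$ case --- that is simpler and more uniform than passing to the closed double, which creates an extra symmetry constraint you would then have to break), or by splitting $D$ into interior and boundary parts and redoing the index bookkeeping for each; in its present form, however, the dimension count does not yield the lemma as stated.
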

\noindent{\bf Proof}.
First, using linear combinations and induction on $K$, it suffices to prove the Lemma for any $K$ and $c_0=\dots=c_{K-1}=0$, which we now show.  
Consider the subbundle $F$ as in the proof of (i) in Proposition \ref{surject}. 
The Maslov index $\mu(E,F)$ is  given by $2L$ and so for each $N\in\nn$, one can take $L$ large enough to have $\mu(F,E) + 2\chi(M) \geq 2N(1+n)$. Therefore by Theorem \ref{boundaryrr} the dimension of the kernel of $\overline\partial_F$ will be greater than $2(n+1)N$. 
Now, since for each $p_j$ and complex coordinate $z_j$ near $p_j$, 
the map $u\to (u(p_j),\pl_{z_j}u(p_j),\dots,\pl_{z_j}^{N-1}u(p_j))\in \cc^{N}$ is linear, 
this implies that there exists a non-zero element $u\in \ker D_F$ which has zeros of order at least $N$ 
at all $p_j$.

First, assume that $p_0\in {\rm int}(M)$ and we want the desired Taylor expansion at $p_0$ in the coordinate $z$.  
In the coordinate $z$, one has $u(z) = \alpha z^M+O(|z|^{M+1})$ for some $\alpha\not=0$ and $M\geq N$. 
Define the function $r_K(z) = \chi(z)\frac{c_K}{\alpha} z^{-M+K}$ 
where $\chi(z)$ is a smooth cut-off function supported near $p_0$ and
which is $1$ near $p_0=\{z=0\}$. Since $M \geq N> 1$, this function has a pole at $p_0$ and trivially extends smoothly to 
$M\backslash \{p_0\}$, which we still call $r_K$. Observe that the function is holomorphic in a neighbourhood of $p_0$ but not at $p_0$ where it is only meromorphic, so that in $M\setminus \{p_0\}$, $\overline\partial r_K$ is a smooth and compactly supported section of $T^*_{0,1}M$ 
and therefore trivially extends smoothly to $M$ (by setting its value to be $0$ at $p_0$) to a one form denoted $\omega_K$. 
By the surjectivity assertion in Corollary \ref{surject}, 
there exists a smooth function $R_K$ satisfying $\overline\partial R_K = - \omega_K$ and that $R_K|_{\Gamma_0'} \in \R$. 
We now have that $R_K + r_K$ is a holomorphic function on $M\backslash\{p_0\}$ 
meromorphic with a pole of order $M-K$ at $p_0$, and in coordinate $z$ one has $z^{M-K}(R_K(z)+r_K(z))= c_K+O(|z|)$. 
Setting $f_K= u(R_K + r_K)$, we have the desired holomorphic function. Note that $f$ also vanish to order $N$ at all 
$p_1,\dots,p_n$ since $u$ does. This achieves the proof.

Now, if $p_0\in \pl M$ we can consider a slightly larger manifold $M'$ containing $M$ and we apply the the result above. \qed\\
We conclude this subsection with the following estimate for the operator $\bar\partial^{-1}e^{2i\psi/h}$.    
\begin{lemma}\label{estimate1}
Let $U$ be an open subset compactly contained in $M$ and for $q,p\in [1,\infty]$. Let $\psi$ be a real valued smooth Morse function on $M$ and let $\bar{\pl}^{-1}_\psi:=\bar{\pl}^{-1}e^{2i\psi/h}$ where $\bar{\pl}^{-1}$ 
is the right inverse of $\bar{\pl}:W^{1,p}(M)\to L^{p}(T^*_{0,1}M)$ constructed in Proposition \ref{surject}. 
Let $q\in(1,\infty)$ and $p>2$,  then there exists $C>0$ independent of $h$ such that for all 
$\omega\in W^{1,p}_0(U,T^*_{0,1}M)$  
\begin{equation}\label{q<2} 
||\bar{\pl}^{-1}_\psi \omega||_{L^q(M)}\leq Ch^{2/3}||\omega||_{W^{1,p}(M,T^*_{0,1}M)} \, \, \,{\rm if }\, 1 \leq q<2 
\end{equation}
\begin{equation}\label{q>=2}
||\bar{\pl}^{-1}_\psi \omega||_{L^q(M)}\leq Ch^{1/q}||\omega||_{W^{1,p}(M,T^*_{0,1}M)} \,\,\, {\rm if }\, 2\leq q\leq p.
\end{equation}
There exists $\eps>0$ and $C>0$ such that for all $\omega\in W_c^{1,p}(M,T^*_{0,1}M)$
\begin{equation}\label{q=2}
||\bar{\pl}^{-1}_\psi \omega||_{L^2(M)}\leq Ch^{\demi+\eps}||\omega||_{W^{1,p}(M,T^*_{0,1}M)}. 
\end{equation}

\end{lemma}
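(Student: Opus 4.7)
The plan is to reduce to a local computation via Proposition \ref{surject}(ii) and then handle the resulting oscillatory Cauchy transform by a scale decomposition around the diagonal and around the Morse critical points of $\psi$. Using a partition of unity subordinate to a finite cover of $\supp(\omega)$ by holomorphic charts, I would replace $\bar\pl^{-1}$ by $\chi'\bar T\chi + K$ in each chart, where $K$ has a smooth kernel on $M\x M$. The contribution of $K(e^{2i\psi/h}\omega)$ is a classical 2D oscillatory integral with smooth kernel and Morse phase $\psi$; standard non-stationary phase away from critical points combined with 2D stationary phase at each critical point yields an $O(h)$ bound in $L^\infty$, which is negligible compared to the claimed exponents. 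The remaining task is then to estimate
\[
u(z)=\bar T\bigl(\chi\, e^{2i\psi/h} f\bigr)(z) = \frac{1}{\pi}\int \frac{e^{2i\psi(z')/h}\,\chi(z')f(z')}{z-z'}\,dz'_1 dz'_2,
\]
where $\omega = f\,d\bar z$, with $f\in W^{1,p}$ supported in $\Omega\Subset\cc$.

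For the principal part, I would split the $z'$-integration into three regions depending on scales $h^{\alpha}$ and $h^{1/2}$: (a) the near-diagonal region $|z-z'|<h^{\alpha}$, where the Cauchy kernel is singular; (b) the union of balls of radius $\sim h^{1/2}$ around the finitely many Morse critical points of $\psi$ in $\supp\chi$, where the oscillation is not effective; and (c) the complement, where $|\nabla\psi|\gtrsim 1$ and $|z-z'|\gtrsim h^{\alpha}$. In region (a), Hölder's inequality with the $L^\infty$ bound on $f$ (which follows from $W^{1,p}\hookrightarrow C^0$ for $p>2$) gives a contribution controlled by $h^{2\alpha/p'}\|f\|_{L^\infty}$. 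In region (b), after a change of variables $z'=p_k+\sqrt h\, w$, the Jacobian produces a factor $h$, and the resulting integral in $w$ is uniformly bounded in $z$. In region (c), one integration by parts in $z'$ using the identity $e^{2i\psi/h}=\tfrac{h}{2i}\tfrac{1}{|\pl\psi|^2}\langle \overline{\pl\psi},\bar\pl\rangle e^{2i\psi/h}$ gains a factor of $h$ at the expense of one $W^{1,p}$ derivative of $f$ and a worsening of the singularity of $1/(z-z')$ by one power, which remains integrable in two dimensions. Summing the three contributions and optimizing $\alpha$ (roughly $\alpha=1/3$ for \eqref{q<2}, $\alpha=1/2$ for \eqref{q>=2}) yields the stated $L^q$ bounds.

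The most delicate step is the borderline case \eqref{q=2}, where the naive argument gives only $h^{1/2}$ and I need the extra gain $h^{\epsilon}$. This is where the Morse hypothesis enters in a nontrivial way: in region (b) one has to compute the $L^2$-norm in $z$ of the model integral $\int_{|w|\leq 1} e^{2iQ(w)}/(z - p_k-\sqrt h w)\,dA(w)$, which by Plancherel on the Fourier-transform side (or by a direct Van der Corput estimate for a quadratic phase against the Cauchy kernel) exhibits a logarithmic smallness at the critical scale $\sqrt h$, producing the sought $h^{\epsilon}$ improvement. I expect this coordination of the Cauchy singularity with the Morse resonance at the scale $h^{1/2}$ to be the main technical obstacle; regions (a) and (c) are standard, but region (b) at $q=2$ requires the sharp quadratic-phase analysis to upgrade the exponent from $1/2$ to $1/2+\epsilon$.
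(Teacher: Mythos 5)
Your overall strategy (localize via Proposition \ref{surject}(ii), split off the smoothing part $K$, then estimate the singular Cauchy transform against an oscillatory factor) is the right one, but the decomposition you propose has a concrete arithmetical flaw in region (a) that breaks the claimed exponent for $q<2$, and you introduce unnecessary complications for $q=2$.

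The problem is in the near-diagonal region $|z-z'|<h^\alpha$. Whichever way you H\"older it, the contribution from this region is $O(h^{\alpha})\|f\|_{L^\infty}$ pointwise in $z$ (hence in $L^q$ over a compact domain), not $O(h^{2\alpha/p'})$. Indeed, the crude bound gives $\int_{|z-z'|<h^\alpha}|z-z'|^{-1}\,dz' \sim h^\alpha$, and H\"older with exponents $(p,p')$ gives $\|f\|_{L^p(B_{h^\alpha})}\,\|\,|z-\cdot|^{-1}\|_{L^{p'}(B_{h^\alpha})}\sim h^{2\alpha/p}\cdot h^{\alpha(2/p'-1)} = h^{\alpha}$; the two exponents always sum to $\alpha$. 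With your choice $\alpha=1/3$, region (a) therefore contributes $h^{1/3}$, which is worse than the required $h^{2/3}$, so the scheme as written fails to give \eqref{q<2}. One can salvage it by taking $\alpha\geq 2/3$, but then you discover the near-diagonal cutoff was never doing any work: the Cauchy kernel is already in $L^q$ (for $q<2$) locally uniformly in $\xi$, and Minkowski's inequality handles the diagonal with no cutoff at all. This is precisely what the paper does. The only cutoff that actually needs optimizing is the one around the critical points of $\psi$; for $q<2$ its radius is $\delta = h^{1/3}$ (the region's \emph{measure} is $\delta^2=h^{2/3}$, which after Minkowski is what produces the $h^{2/3}$, balancing the $h\delta^{-1}$ from integration by parts outside), and for $q\geq 2$ it is $\delta=h^{1/2}$ (using $L^q$-boundedness of $T$ rather than Minkowski, giving $\delta^{2/q}=h^{1/q}$). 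Your scheme has these two scales fixed and swapped: the critical-point radius is frozen at $h^{1/2}$ and the variable one is placed on the diagonal, where it accomplishes nothing.

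Separately, you identify the borderline case $q=2$ as the main technical obstacle and propose a direct Van der Corput/Plancherel argument at the critical scale $\sqrt h$. This is not needed: \eqref{q=2} follows immediately from \eqref{q<2} and \eqref{q>=2} by interpolation, since $h^{2/3}$ and $h^{1/q}|_{q=2+\delta}$ both beat $h^{1/2}$ strictly. The extra $\epsilon$ is handed to you by the interpolation parameter; no fine analysis of the quadratic phase against the Cauchy singularity is required. (Such an analysis would in fact give a much stronger $L^2$ bound, closer to $h$ up to logarithms, but the Lemma does not need it.)
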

\begin{proof}
Observe that the estimate \eqref{q=2} is a direct corollary of \eqref{q>=2} and \eqref{q<2} by using interpolation.
We recall the Sobolev embedding $W^{1,p}(M)\subset C^\alpha(M)$ for 
$\alpha\leq 1-2/p$ if $p>2$, and we shall denote by $\bar T$ the Cauchy-Riemann inverse of $\pl_{\bar{z}}$ in $\cc$:
\[\bar T(fd\bar z):=\frac{1}{\pi}\int_{\cc}\frac{f(\xi)}{z-\xi}d\xi_1d\xi_2\]
where $\xi=\xi_1+i\xi_2$. If $\Omega, \Omega'\subset \cc$ are bounded open sets, then 
the operator $\indic_{\Omega'}T$ maps $L^p(\Omega)$ to $L^p(\Omega')$. 
Since $\omega$ is  compactly supported in a chart $U$ biholomorphic to a bounded domain $\Omega\subset \cc$, 
 and since the estimates will be localized, we can assume with no loss of generality that $\psi$ has only one critical point, say $z_0\in \Omega$ (in the chart).  
The expression of $\bar{\pl}^{-1}_\psi (fd\bar{z})$ in complex local coordinates in the chart $\Omega$ satisfies 
\[ \bar{\pl}^{-1}_\psi (f(z)d\bar{z})= \chi(z)T(e^{-2i\psi/h}f) + K(e^{-2i\psi/h}fd\bar{z})\]
where $K$ is an operator with smooth kernel and $\chi\in C_0^\infty(\cc)$ is identically $1$ on $U$.\\

Let us first prove \eqref{q<2}. Let $\chi_\delta\in C_0^\infty(\cc)$ be a function which is equal to $1$ for $|z-z_0|>2\delta$ and 
to $0$ in $|z-z_0|\leq \delta$, where  $\delta>0$ is a parameter that will be chosen later (it will depend on $h$).
Using Minkowski inequality, one can write when $q<2$
\begin{equation}\label{1-varphi}
\begin{split}
||\chi T((1-\chi_\delta)e^{-2i\psi/h}f)||_{L^q(\cc)} 
\leq & \int_{\Omega}\Big|\Big|\frac{\chi(\cdot)}{|\cdot -\xi|}\Big|\Big|_{L^q(\cc)}|(1-\chi_\delta(\xi))f(\xi)| d\xi_1d\xi_2\\
\leq & C||f||_{L^\infty}\int_{\Omega}|(1-\chi_\delta(\xi))| d\xi_1d\xi_2
\leq C\delta^2||f||_{L^\infty}.
\end{split}\end{equation}
On the support of $\chi_\delta$, we observe that since $\chi_\delta=0$ near $z_0$, we can use 
\[T(e^{-2i\psi/h}\chi_\delta f)=\demi ih[e^{-2i\psi/h}\frac{\chi_\delta f}{\bar{\pl}\psi}-T(e^{-2i\psi/h}\bar{\pl}(\frac{\chi_\delta f}{\bar{\pl}\psi}))]\] 
and the boundedness of $T$ on $L^q$ to deduce that for any $q<2$
\begin{equation}\label{varphi}
\begin{split}
||\chi T(\chi_\delta e^{-2i\psi/h}f)||_{L^q(\cc)} 
\leq & Ch \Big(||\frac{\chi_\delta f}{\bar{\pl}\psi}||_{L^q} +
||\frac{f\bar{\pl}\chi_\delta}{\bar{\pl}\psi}||_{L^q}+||\frac{\chi_\delta \bar{\pl}f}{\bar{\pl}\psi}||_{L^q}+
||\frac{f\chi_\delta}{(\bar{\pl}\psi)^2}||_{L^q}
\Big).
\end{split}\end{equation}
The first term is clearly bounded by $\delta^{-1}\|f\|_{L^\infty}$ due to the fact that $\psi$ is Morse. For the last term, observe that since $\psi$ is Morse, $\frac{1}{|\partial \psi|} \leq \frac{c}{|z-z_0|}$ near $z_0$, therefore
\[||\frac{f\chi_\delta}{(\bar{\pl}\psi)^2}||_{L^q} \leq C\|f\|_{L^\infty} (\int_\delta^1 r^{1-2q}dr)^{1/q} \leq C \delta^{\frac{2}{q} -2}\|f\|_{L^\infty}.\]
The second term can be bounded by $||\frac{f\bar{\pl}\chi_\delta}{\bar{\pl}\psi}||_{L^q}\leq\|f\|_{L^\infty} ||\frac{\bar{\pl}\chi_\delta}{\bar{\pl}\psi}||_{L^q}$. Observe that while $\|\frac{\bar{\pl}\chi_\delta}{\bar{\pl}\psi}\|_{L^\infty}$ grows like $\delta^{-2}$, $\bar{\pl}\chi_\delta$ is only supported in a neighbourhood of radius $2\delta$. Therefore we obtain
\[||\frac{f\bar{\pl}\chi_\delta}{\bar{\pl}\psi}||_{L^q} \leq \delta^{2/q -2} \|f\|_{L^\infty}.\]
The third term can be estimated by
\[||\frac{\chi_\delta \bar{\pl}f}{\bar{\pl}\psi}||_{L^q}\leq C||\bar{\pl}f||_{L^p} ||\frac{\chi_\delta}{\bar{\pl}\psi}||_{L^\infty}\leq 
C\delta^{-1}||\bar{\pl}f||_{L^p}. \] 
Combining these four estimates with \eqref{varphi} we obtain
\[||\chi T(\chi_\delta e^{-2i\psi/h}f)||_{L^q(\cc)}\leq h\|f\|_{W^{1,p}}(\delta^{-1} + \delta^{2/q -2}).\] 
Combining this and \eqref{1-varphi} and optimizing by taking $\delta=h^{1/3}$, we deduce that 
\begin{equation}
\label{singular part q<2}
||\chi T( e^{-2i\psi/h}f)||_{L^q(\cc)}\leq h^{2/3}\|f\|_{W^{1,p}}
\end{equation}
if $q<2$.
We now move on to the smoothing part given by $K(e^{-2i\psi/h}f)$. Take $\chi$ to be a compactly supported function in $\Omega$ such that it is equal to $1$ on the support of $f$, we see that $K(e^{2i\psi/h}f) = K(e^{-2i\psi/h}(f - \chi f(z_0)) + f(z_0)K(e^{-2i\psi/h}\chi)$. By applying stationary phase, we easily see that $\|f(z_0)K(e^{-2i\psi/h}\chi)\|_{L^q} \leq Ch \|f\|_{C^0}$ for any $q\in [1,\infty]$. 
For the first term, we write $\til{f}:=f-\chi f(z_0)$ and we integrate by parts to get, for some smoothing operator $K'$ 
\[K(e^{-2i\psi/h}\til{f}) = hK'(e^{-2i\psi/h}\til{f}) +\frac{h}{2i} K\Big(e^{-2i\psi/h}\pl_z\Big(\frac{\til{f}}{\pl_z\psi}\Big)\Big).\]
By the fact that $K$ and $K'$ are smoothing, we see that for all $k\in \nn$
\[\|K(e^{2i\psi/h}\til{f})\|_{C^k} \leq h C\Big(\|f\|_{L^\infty} + \Big\|\pl_z\Big(\frac{\til{f}}{\pl_z\psi}\Big)\Big\|_{L^1}\Big)\]
Using the fact that $\psi$ is Morse, the Sobolev embedding $W^{1,p}\subset C^{\alpha}$ for $\alpha=1-2/p$ 
and  $\til{f}(z_0)=0$, we can estimate the last term by $C \|f\|_{W^{1,p}}$ if $p>2$. Therefore,
\begin{eqnarray}
\label{smoothing estimate}
\|K(e^{2i\psi/h}f)\|_{L^q} \leq C h\|f\|_{W^{1,p}}
\end{eqnarray}
for any $q\in [1,\infty]$ and $p>2$. Combining \eqref{smoothing estimate} and \eqref{singular part q<2} we see that \eqref{q<2} is established.\\

Let us now turn our attention to the case when $\infty>q\geq 2$, one can use the boundedness of $T$ on $L^q$ and thus 
\begin{equation}\label{casq>2}
||\chi T((1-\chi_\delta)e^{-2i\psi/h}f)||_{L^q(\cc)}\leq  ||(1-\chi_\delta)e^{-2i\psi/h}f||_{L^q(\Omega)}\leq C\delta^{\frac{2}{q}}||f||_{L^\infty}.
\end{equation}
Now since $\chi_\delta=0$ near $z_0$, we can use 
\[T(e^{-2i\psi/h}\chi_\delta f)=
\demi ih[e^{-2i\psi/h}\frac{\chi_\delta f}{\pl_{\bar{z}}\psi}-T(e^{-2i\psi/h}\pl_{\bar{z}}(\frac{\chi_\delta f}{\pl_{\bar{z}}\psi}))]\] 
and the boundedness of $T$ on $L^q$ to deduce that for any $q\leq p$, \eqref{varphi} holds again with all the terms satisfying the same estimates as before so that 
\[\|T(e^{-2i\psi/h}\chi_\delta f)\|_{L^q} \leq Ch\|f\|_{W^{1,p}}(\delta^{2/q -2} + \delta^{-1}) \leq Ch\delta^{2/q-2}\|f\|_{W^{1,p}} \]
since now $q \geq 2$. Now combine the above estimate with \eqref{casq>2} and take $\delta = h^\demi$ we get
\[\|T(e^{-2i\psi/h} f)\|_{L^q} \leq h^{1/q} \|f\|_{W^{1,p}}\] 
for $2 \leq q\leq p$. The smoothing operator $K$ is controlled by \eqref{smoothing estimate} for all $q\in [1,\infty]$ and therefore we obtain \eqref{q>=2}.
\end{proof}

\subsection{Morse holomorphic functions with prescribed critical points} 
The main result of this section is the following 
\begin{proposition}
\label{criticalpoints}
Let $\hat p$ be an interior point of $M$ and $\epsilon >0$ small. 
Then there exists a holomorphic function $\Phi$ on $M$ which is Morse on $M$ (up to the boundary) and real valued on $\Gamma_0'$, 
which has a critical point $p'$ at distance less than $\eps$ from $\hat p$ and such that 
${\rm Im}(\Phi(p'))\not=0$. 
\end{proposition}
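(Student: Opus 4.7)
The plan is to build $\Phi$ in two stages. First, use Lemma~\ref{control the zero} to construct a seed holomorphic function $\Phi_0$ that is real on $\Gamma_0'$, has a non-degenerate critical point at $\hat p$, and satisfies $\im \Phi_0(\hat p)\neq 0$. Second, perturb $\Phi_0$ inside an infinite dimensional family of admissible corrections to kill the (potentially degenerate) critical points elsewhere on $M$, while preserving the local data at $\hat p$ up to a small displacement.

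For the seed, let $z$ be a holomorphic chart centered at $\hat p$ and apply Lemma~\ref{control the zero} with $p_0=\hat p$, no auxiliary vanishing points, $K=2$, and Taylor data $(c_0,c_1,c_2)=(i,0,1)$. This produces $\Phi_0$ holomorphic on $M$, real on $\Gamma_0'$, with $\Phi_0(z)=i+z^2+O(|z|^3)$; hence $\pl\Phi_0(\hat p)=0$, $\pl^2\Phi_0(\hat p)\neq 0$, and $\im\Phi_0(\hat p)=1$. Let $\mc H_3$ denote the space of holomorphic functions on $M$ that are real on $\Gamma_0'$ and vanish to order $\geq 3$ at $\hat p$; this is infinite dimensional by Proposition~\ref{surject}(iv) combined with Lemma~\ref{control the zero}. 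For finitely many $\eta_1,\dots,\eta_N\in\mc H_3$ and $t=(t_k)\in\R^N$ near $0$, set
\[ \Phi_t := \Phi_0+\sum_{k=1}^N t_k\eta_k. \]
Because each $\eta_k$ vanishes to order $\geq 3$ at $\hat p$, the implicit function theorem applied to $\pl\Phi_t=0$ yields a Morse critical point $p'(t)$ of $\Phi_t$ with $p'(t)\to\hat p$ and $\im\Phi_t(p'(t))\to 1$ as $t\to 0$, so $|p'(t)-\hat p|<\eps$ and $\im\Phi_t(p'(t))\neq 0$ for all sufficiently small $t$.

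To force Morseness on all of $M$, I would invoke parametric transversality (Sard--Smale) applied to
\[ F:M\times\R^N\to T^*_{1,0}M,\qquad F(p,t):=\pl\Phi_t(p). \]
If $F$ is transverse to the zero section, then for almost every small $t$ the section $p\mapsto \pl\Phi_t(p)$ is itself transverse to $0$, which is precisely the Morse condition on $\Phi_t$. Transversality of $F$ at a point $(p,0)$ reduces to the real linear map $t\mapsto\sum_k t_k\pl\eta_k(p)$ being surjective onto $T^*_{1,0}M|_p\simeq\cc$, i.e.\ to the existence of two elements $\eta_{k_i}\in\mc H_3$ whose values $\pl\eta_{k_1}(p),\pl\eta_{k_2}(p)$ are $\R$-linearly independent.

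The principal obstacle is exhibiting a single finite family $\{\eta_k\}\subset\mc H_3$ realizing this real surjectivity at every $p\in M$ simultaneously. I would handle this by reapplying Lemma~\ref{control the zero}: for each interior $p\neq\hat p$, take $p_0=p$ and treat $\hat p$ as an auxiliary point with vanishing order $\geq 3$, which supplies elements of $\mc H_3$ with arbitrary prescribed first jet at $p$; boundary points are dealt with via the slight enlargement of $M$ used at the end of the proof of Lemma~\ref{control the zero}. Continuity of $\pl\eta$ and compactness of $M$ then reduce the pointwise requirement to finitely many covering neighborhoods, producing the needed finite family $\eta_1,\dots,\eta_N$. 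A Sard--Smale generic small $t$ yields a Morse $\Phi:=\Phi_t$ with the required critical point $p'=p'(t)$ near $\hat p$ and $\im\Phi(p')\neq 0$.
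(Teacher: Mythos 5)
Your parametric transversality strategy is in the same spirit as the paper's: the paper runs Uhlenbeck's theorem (Theorem \ref{transversality}) with the infinite-dimensional space $\mc H_{\tilde\Gamma_0}$ as parameter, while you use a finite-dimensional slice inside it; by compactness these reduce to the same pointwise surjectivity check, and your seed construction together with the implicit-function-theorem tracking of the critical point near $\hat p$ is sound. The genuine gap is at $\Gamma_0'$, where the surjectivity you require is structurally impossible. Any $\eta$ holomorphic on $M$ and real on $\Gamma_0'$ satisfies $\pl_z\eta(p)\in\R$ for every $p\in\Gamma_0'$: in a boundary chart with $\Gamma_0'=\{y=0\}$, writing $\eta=a+ib$ with $b|_{y=0}=0$, Cauchy--Riemann gives $a_y=-b_x=0$ on $y=0$, so $\pl_z\eta=a_x$ there. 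Hence the image of $t\mapsto\sum_k t_k\,\pl\eta_k(p)$ sits in the real line $\R\,dz\subsetneq T^*_{1,0}M|_p$ at every $p\in\Gamma_0'$; the map $F$ cannot be transverse to the zero section at such points, and Sard--Smale tells you nothing about degenerate critical points on $\Gamma_0'$. The enlargement trick you invoke covers interior points and, after extending $M$, points of $\pl M\setminus\bar\Gamma_0'$, but not $\Gamma_0'$ itself --- Lemma \ref{control the zero} explicitly gives up the reality constraint when $p_0$ is a boundary point.

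This is not a fixable technicality within your setup: along $\Gamma_0'$ the section $\pl\Phi_t$ is confined to a real sub-line-bundle of $T^*_{1,0}M$, so boundary critical points are zeros of a real scalar function on the one-dimensional $\Gamma_0'$, and the right genericity condition is that these zeros be simple --- a 1D-to-1D transversality problem, not the 2D one your $F$ encodes. This is precisely why the paper splits the analysis: Lemma \ref{degeneracy condition} identifies non-degeneracy of a boundary critical point with $\pl_\tau\pl_\nu u\neq 0$, and Lemmas \ref{nonvanishing normal derivative}, \ref{mixed partial non-vanishing}, \ref{boundarymorse} run a second transversality argument for the map $b(p,u)=(p,\pl_\nu u)$ valued in the real line bundle $N^*\tilde\Gamma_0$, with Corollary \ref{boundarymorse dense} intersecting the two residual sets. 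To repair your proposal you need that separate boundary transversality, and (as in the final paragraph of the paper's proof) to work on a slightly larger surface $\mc O\supset M$ so that $\pl M\setminus\bar\Gamma_0'$ becomes interior and critical points cannot accumulate at $\pl M$.
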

Let $\mc{O}$ be a connected open set of $M^D$ such that $\bar{\mc{O}}$ is a smooth surface with boundary, with $\bar{M}\subset \bar{\mc{O}}\subset M^D$ and $\Gamma_0'\subset\pl\bar{\mc{O}}$.
Fix $k>2$ a large integer, we denote by $C^k(\bar{\mc{O}})$ the Banach space of $C^k$ real valued functions on $\bar{\mc{O}}$. 
Then the set of harmonic functions on $\bar{\mc{O}}$ which are in the Banach space
$C^{k}(\bar{\mc{O}})$ (and smooth in $\mc{O}$ by elliptic regularity) 
is the kernel of the continuous map $\Delta:C^k(\bar{\mc{O}})\to C^{k-2}(\bar{\mc{O}})$, 
and so it is a Banach subspace of $C^k(\bar{\mc{O}})$. 
The set $\mc{H}\subset C^k(\bar{\mc{O}})$ of harmonic functions $u$ in $C^k(\bar{\mc{O}})$ such there exists 
$v\in C^{k}(\bar{\mc{O}})$ harmonic with $u+iv$ holomorphic on $\mc{O}$ is a Banach subspace of $C^{k}(\bar{\mc{O}})$ of 
finite codimension. Indeed, let $\{\gamma_1,..,\gamma_N\}$ be a homology basis for $\mc{O}$, then
\[\mc{H}=\ker L , \textrm{ with } L: \ker\Delta\cap C^k(\bar{\mc{O}})\to \cc^N \textrm{ defined by }
L(u):=\Big(\frac{1}{\pi i}\int_{\gamma_j}\pl u\Big)_{j=1,\dots,N}.\]
For all $\tilde \Gamma_0\subset \pl \cal O$ such that the complement of $\tilde\Gamma_0$ contains an open subset, 
we define 
\[\mc{H}_{\tilde\Gamma_0} := \{u \in\mc{H} ; u|_{\tilde\Gamma_0} =0\}.\]
We now show
\begin{lemma}\label{morsedense}
The set of functions $u\in\mc{H}_{\tilde\Gamma_0}$ which are Morse in $\mc{O}$ 
is residual (i.e. a countable intersection of open dense sets) 
in $\mc{H}_{\tilde\Gamma_0}$ with respect to the $C^k(\bar{\mc{O}})$ topology.
\end{lemma}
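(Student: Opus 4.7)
The statement is a standard ``Morse functions are generic'' result, and I would prove it by the classical transversality argument combined with Sard--Smale. First exhaust $\mathcal{O}$ by an increasing sequence of compacta $K_n \subset \mathcal{O}$ with $\bigcup_n K_n = \mathcal{O}$, and set
\[
\mathcal{M}_n := \{u \in \mathcal{H}_{\tilde\Gamma_0} : \text{every critical point of } u \text{ in } K_n \text{ is non-degenerate}\}.
\]
A function $u \in \mathcal{H}_{\tilde\Gamma_0}$ is Morse on $\mathcal{O}$ exactly when $u \in \bigcap_n \mathcal{M}_n$, so it suffices to show each $\mathcal{M}_n$ is open and dense in the $C^k(\bar{\mathcal{O}})$ topology.

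Openness of $\mathcal{M}_n$ is standard: non-degeneracy of critical points on the compact $K_n$ is stable under small $C^2$-perturbations (apply the implicit function theorem to $du$), and since $k>2$ we have $C^k\hookrightarrow C^2$. For density, I would apply the parametric transversality theorem to the evaluation
\[
F:\mathcal{H}_{\tilde\Gamma_0}\times\mathcal{O}\longrightarrow T^*\mathcal{O},\qquad F(u,p)=du(p).
\]
If $F$ is transverse to the zero section of $T^*\mathcal{O}$, then by Sard--Smale a residual set of $u\in\mathcal{H}_{\tilde\Gamma_0}$ has the property that $p\mapsto du(p)$ is transverse to the zero section, which is precisely the non-degeneracy of the Hessian at every critical point, i.e.\ $u$ is Morse. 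Restricting to $K_n$ then yields density of $\mathcal{M}_n$.

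Transversality of $F$ is the crux. At a point $(u_0,p_0)$ with $du_0(p_0)=0$, the linearization sends $(v,\xi)\in\mathcal{H}_{\tilde\Gamma_0}\times T_{p_0}\mathcal{O}$ to $dv(p_0)+\operatorname{Hess}(u_0)(p_0)\xi\in T^*_{p_0}\mathcal{O}\cong\mathbb{R}^2$. Since the Hessian may be degenerate, I would need the map $v\mapsto dv(p_0)$ to already surject onto $T^*_{p_0}\mathcal{O}$. This is where the rich supply of holomorphic functions from Lemma \ref{control the zero} comes in: applied on $\mathcal{O}$, with $p_0$ an interior point, it produces a holomorphic function with arbitrarily prescribed first-order jet at $p_0$ and a prescribed boundary reality condition; multiplying by appropriate elements of $\{1,i\}$ and taking real parts yields two elements $v_1,v_2\in\mathcal{H}_{\tilde\Gamma_0}$ whose differentials span $T^*_{p_0}\mathcal{O}$.

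The main obstacle I anticipate is the mismatch between the boundary condition defining $\mathcal{H}_{\tilde\Gamma_0}$ (the real part of the holomorphic function vanishes on $\tilde\Gamma_0$) and the boundary condition of Lemma \ref{control the zero} (the holomorphic function itself is real on $\Gamma_0'$). These two are interchanged by the map $f\mapsto if$, so applying the lemma on $\mathcal{O}$ with $\tilde\Gamma_0$ playing the role of $\Gamma_0'$ and then multiplying by $-i$ produces functions whose \emph{real parts} vanish on $\tilde\Gamma_0$, yielding genuine elements of $\mathcal{H}_{\tilde\Gamma_0}$. Once this bookkeeping is in place, the transversality argument goes through unchanged and the proof is complete.
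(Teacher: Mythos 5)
Your proposal takes essentially the same route as the paper: both establish transversality of the parametric map $(u,p)\mapsto du(p)$ at zero of $T^*\mathcal{O}$ by checking that $v\mapsto dv(p)$ surjects onto $T^*_p\mathcal{O}$ from $\mathcal{H}_{\tilde\Gamma_0}$, using Lemma \ref{control the zero} to produce the two required functions (the paper takes imaginary parts of holomorphic functions that are real on $\tilde\Gamma_0$, which is the same bookkeeping as your multiplication by $\pm i$), and then invoke a Sard--Smale/parametric transversality theorem. The paper cites Uhlenbeck's transversality theorem directly rather than writing out the exhaustion-by-compacta step, but that exhaustion is precisely what underlies Uhlenbeck's statement, so the arguments are the same in substance.
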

\noindent{\bf Proof}. We use an argument very similar to those used by Uhlenbeck \cite{Uh}.
We start by defining $m: \mc{O}\times \mc{H}_{\tilde\Gamma_0}\to T^*\mc{O}$ by $(p,u) \mapsto (p,du(p))\in T_p^*\mc{O}$. 
This is clearly a smooth map, linear in the second variable, moreover $m_u:=m(.,u)=(\cdot, du(\cdot))$ is  
Fredholm since $\mc{O}$ is finite dimensional. The map $u$ is a Morse function if and only if 
$m_u$ is transverse to the zero section, denoted $T_0^*\mc{O}$, of $T^*\mc{O}$, ie. if 
\[\textrm{Image}(D_{p}m_u)+T_{m_u(p)}(T_0^*\mc{O})=T_{m_u(p)}(T^*\mc{O}),\quad \forall p\in \mc{O} \textrm{ such that }m_u(p)=(p,0),\]
which is equivalent to the fact that the Hessian of $u$ at critical points is 
non-degenerate (see for instance Lemma 2.8 of \cite{Uh}). 
We recall the following transversality theorem (\cite[Th.2]{Uh}):
\begin{theorem}\label{transversality}
Let $m : X\times \mc{H}_{\tilde\Gamma_0} \to W$ be a $C^k$ map, where $X$, $\mc{H}_{\tilde\Gamma_0}$, and $W$ are separable Banach manifolds 
with $W$ and $X$ of finite dimension. Let $W'\subset W$ be a submanifold such that $k>\max(1,\dim X-\dim W+\dim W')$.
If $m$ is transverse to $W'$  then the set 
$\{u\in \mc{H}_{\tilde\Gamma_0}; m_u \textrm{ is transverse to } W'\}$ is dense in $\mc{H}_{\tilde\Gamma_0}$, more precisely 
it is a residual set. 
\end{theorem}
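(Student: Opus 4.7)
The plan is to deduce Theorem~\ref{transversality} from the Sard--Smale theorem applied to the projection from the universal preimage $m^{-1}(W')$ to the parameter space $\mc{H}_{\tilde\Gamma_0}$. The underlying idea is that $u\in\mc{H}_{\tilde\Gamma_0}$ is a value for which $m_u$ is transverse to $W'$ precisely when it is a regular value of this projection; Sard--Smale then gives that such $u$ form a residual set.

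First I would show that $Z:=m^{-1}(W')$ is a $C^k$ Banach submanifold of $X\times \mc{H}_{\tilde\Gamma_0}$ of codimension $\dim W-\dim W'$. This is a direct application of the implicit function theorem in Banach spaces: the hypothesis that $m$ is transverse to $W'$ means that at every $(p,u)\in Z$ the composition of $Dm_{(p,u)}$ with the quotient projection $T_{m(p,u)}W\to T_{m(p,u)}W/T_{m(p,u)}W'$ is surjective. Since $W$ and $W'$ are finite dimensional, this quotient is finite dimensional, so the kernel of the composition splits in the product Banach structure, giving $Z$ its submanifold structure with the claimed codimension and $C^k$ regularity.

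Next I would consider the projection $\pi:Z\to\mc{H}_{\tilde\Gamma_0}$ defined by $\pi(p,u)=u$. A short linear-algebra computation at $(p,u)\in Z$ identifies $\ker D\pi_{(p,u)}$ with the set of $v\in T_pX$ such that $Dm_u(v)\in T_{m(p,u)}W'$, while the cokernel is isomorphic to the obstruction for $m_u$ to be transverse to $W'$ at $p$, namely the cokernel of $T_pX\to T_{m(p,u)}W/T_{m(p,u)}W'$ induced by $Dm_u$. Both have dimension bounded by $\dim X$ and $\dim W-\dim W'$ respectively, so $\pi$ is Fredholm of index $\dim X-\dim W+\dim W'$. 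Moreover one sees directly from this that $u$ is a regular value of $\pi$ (i.e.\ $D\pi$ is surjective on every fiber of $\pi^{-1}(u)$) if and only if $m_u\pitchfork W'$ in the sense of transversality at every $p$ with $m_u(p)\in W'$.

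With these preparations in place, I would invoke the Sard--Smale theorem: a $C^k$ Fredholm map $\pi$ between second-countable Banach manifolds has a residual set of regular values provided $k>\max(0,\mathrm{ind}\,\pi)$. Here $\mathrm{ind}\,\pi=\dim X-\dim W+\dim W'$, so the hypothesis $k>\max(1,\dim X-\dim W+\dim W')$ is exactly what is needed (the ``1'' absorbs the case when the index is $0$ or negative, where one still needs $C^1$ regularity to speak of critical points). The regular value set thus obtained coincides with $\{u\in\mc{H}_{\tilde\Gamma_0}:m_u\pitchfork W'\}$, which proves the theorem. The main obstacles are technical rather than conceptual: one must carefully verify the $C^k$ Banach submanifold structure of $Z$ using that the transversality occurs in a finite-codimensional direction (so the splitting is automatic), and one must check the Fredholm index computation by matching kernel and cokernel of $D\pi$ to those of the transversality obstruction map for $m_u$; otherwise the proof reduces to a clean application of Sard--Smale.
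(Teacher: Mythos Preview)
Your proposal is correct and follows the standard Sard--Smale argument that underlies parametric transversality results of this type. Note, however, that the paper does not actually prove this theorem: it is quoted as a known result from Uhlenbeck's paper \cite{Uh} (specifically Theorem~2 there), so there is no ``paper's own proof'' to compare against. Your outline---constructing the universal preimage $Z=m^{-1}(W')$ as a $C^k$ Banach submanifold, identifying regular values of the projection $\pi:Z\to\mc{H}_{\tilde\Gamma_0}$ with parameters $u$ for which $m_u\pitchfork W'$, and invoking Sard--Smale with the Fredholm index $\dim X-\dim W+\dim W'$---is exactly how such theorems are established in the literature, including in Uhlenbeck's original work.
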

We want to apply it with $X:=\mc{O}$, $W:=T^*\mc{O}$ and $W':=T^*_0\mc{O}$, and the map $m$ is defined above. 
We have thus proved Lemma \ref{morsedense} if one can show that $m$ is transverse to $W'$. 
Let $(p,u)$ such that $m(p,u)=(p,0)\in W'$. Then identifying $T_{(p,0)}(T^*\mc{O})$ with $T_p\mc{O}\oplus T^*_p\mc{O}$, one has
\[D_{(p,u)}m(z,v)=(z,dv(p)+{\rm Hess}_p(u)z)\]
where ${\rm Hess}_pu$ is the Hessian of $u$ at the point $p$, viewed as a linear map from $T_p\mc{O}$ to $T^*_p\mc{O}$. 
To prove that $m$ is transverse to $W'$
we need to show that $(z,v)\to (z, dv(p)+{\rm Hess}_p(u)z)$ is onto from $T_p\mc{O}\oplus \mc{H}_{\tilde\Gamma_0}$ to $T_p\mc{O}\oplus T^*_p\mc{O}$, which 
is realized for instance if the map $v\to dv(p)$ from $\mc{H}_{\tilde\Gamma_0}$ to  $T_p^*\mc{O}$ is onto.
But from Lemma \ref{control the zero}, we know that there exist holomorphic functions $v$ and $\tilde v$ on $\mc{O}$  such that $v$ and $\tilde v$ are purely real on $\tilde\Gamma_0$. Clearly the imaginary parts of $v$ and $\tilde v$ belong to $\mc{H}_{\tilde\Gamma_0}$. Furthermore, for a given complex coordinate $z$ near $p=\{z=0\}$, we can arrange them to have series expansion $v(z)=z+ O(|z|^2)$ and ${\tilde v}(z)=iz + O(|z|^2)$ around the point $p$. We see, by coordinate computation of the exterior derivative of ${\rm Im}(v)$ and ${\rm Im}(\til{v})$, that 
 $d\, {\rm Im}(v)(p)$ and $d \,{\rm Im}(\tilde v)(p)$ are linearly independent at the point $p$. This shows 
 our claim and ends the proof of Lemma \ref{morsedense} by using Theorem \ref{transversality}.\qed\\
We now proceed to show that the set of all functions $u\in \mc{H}_{\tilde\Gamma_0}$ such that $u$ has no degenerate critical points on $\tilde\Gamma_0$ is also residual.

\begin{lemma}
\label{nonvanishing normal derivative}
For all $p\in \tilde\Gamma_0$ and $k\in\nn$, there exists a holomorphic function $u\in C^{k}(\bar{\mc{O}})$, such that ${\rm Im}(u)|_{\tilde\Gamma_0}=0$  and 
$\pl u(p) \neq 0$.
\end{lemma}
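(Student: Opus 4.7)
The plan is to mimic the meromorphic construction of Lemma \ref{control the zero}, adapted to the boundary point $p \in \tilde\Gamma_0$ while preserving reality on $\tilde\Gamma_0$. First, applying Proposition \ref{surject}(iv) with $\mc{O}$ and $\tilde\Gamma_0$ in place of $M$ and $\Gamma_0'$ (the argument goes through unchanged), I would obtain a nontrivial holomorphic function $v \in C^{k}(\bar{\mc{O}})$ with $v|_{\tilde\Gamma_0} \in \R$. In a boundary-flattening chart $U$ centered at $p$ in which $\tilde\Gamma_0 \cap U = \{z \in \R\}$, local Schwarz reflection extends $v$ holomorphically to a full neighborhood of $p$, so its order of vanishing at $p$ is a finite integer $M \geq 0$. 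Since $v(p) \in \R$, subtracting $v(p)$ from $v$ if needed, I may assume $M \geq 1$, and the Schwarz symmetry then forces the expansion $v(z) = \alpha z^{M} + O(|z|^{M+1})$ with $\alpha \in \R \setminus \{0\}$. If $M = 1$ one sets $u := v$ and $\pl u(p) = \alpha\,dz \neq 0$, so the substantive case is $M \geq 2$.

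Following the template of Lemma \ref{control the zero} with $K = 1$, I would let $\chi$ be a smooth cutoff equal to $1$ near $p$ and supported in $U$, define $r(z) := \alpha^{-1} \chi(z) z^{-M+1}$ extended by zero to $\bar{\mc{O}} \setminus \{p\}$, and set $\omega := \bar\partial r$, which is smooth, compactly supported, and vanishes identically near $p$. Proposition \ref{surject}(i) then yields $R$ solving $\bar\partial R = -\omega$ on $\mc{O}$ with $R|_{\tilde\Gamma_0} \in \R$, so $r + R$ is holomorphic on $\mc{O} \setminus \{p\}$, real on $\tilde\Gamma_0$, and meromorphic at $p$ with pole of order $M - 1$. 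The candidate function is $u := v \cdot (r + R)$: the order-$M$ zero of $v$ cancels the order-$(M-1)$ pole of $r + R$, producing a holomorphic function on $\mc{O}$ with local expansion
\[u(z) = \alpha z^{M} \cdot \alpha^{-1} z^{-M+1} + O(|z|^{2}) = z + O(|z|^{2}),\]
so that $\pl u(p) = dz \neq 0$; reality on $\tilde\Gamma_0$ follows from $u$ being a product of two real-valued factors there, and $C^{k}$ regularity is obtained by choosing the Sobolev indices in Proposition \ref{surject}(i) large enough.

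The key point requiring care, and what distinguishes the present argument from that of Lemma \ref{control the zero} at boundary points (where the final sentence of its proof was forced to concede that the constructed function was "not necessarily real on $\Gamma_0'$"), is that $r$ itself be real on $\tilde\Gamma_0$. This works precisely because the boundary-flattening coordinate at $p$ makes $z^{-M+1}$ automatically real on $\tilde\Gamma_0 \cap U$, and the leading coefficient $\alpha$ is already real from the Schwarz reflection of $v$. The remaining reality condition, on $R$, is guaranteed by Proposition \ref{surject}(i), whose right inverse $\bar\partial^{-1}$ by construction lands in functions that are real on $\tilde\Gamma_0$.
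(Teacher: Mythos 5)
Your proof is correct and takes essentially the same route as the paper: the core construction $u = v(r+R)$, built from a holomorphic $v$ real on $\tilde\Gamma_0$ vanishing at $p$ to order $M$, a singular factor $r(z) = \chi(z)z^{-M+1}$ in boundary-flattening coordinates, and a corrector $R$ from $\bar\partial^{-1}$ with real boundary trace, is exactly the paper's, and your observation that the reality constraint survives at $p\in\tilde\Gamma_0$ because $z^{-M+1}$ is automatically real on the flattened boundary while the leading coefficient $\alpha$ is real by Schwarz reflection correctly identifies the key point. The single cosmetic difference is how $v$ is produced (Proposition \ref{surject}(iv) plus subtraction of $v(p)$ versus a direct invocation of Lemma \ref{control the zero}), and the one small fix you should make is to insist on a nonconstant rather than merely nontrivial $v$, so that $v - v(p)$ is not identically zero; the infinite-dimensionality in Proposition \ref{surject}(iv) supplies nonconstant choices.
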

\noindent{\bf Proof}. The proof is quite similar to that of Lemma \ref{control the zero}. By Lemma \ref{control the zero}, we can choose a holomorphic function $v\in C^k(\bar{\mc{O}})$ such that $v(p)=0$ and ${\rm Im}(v)|_{\tilde\Gamma_0}=0$, then either $\pl v(p)\not=0$ and we are done, or $\pl v(p)=0$. 
Assume now the second case and let $M\in\nn$ be the order of $p$ as a zero of $v$. By Riemann mapping theorem, there is a conformal
mapping from a neighbourhood $U_p$ of $p$ in $\bar{\mc{O}}$ to a neighbourhood $\{|z|<\eps, {\rm Im}(z)\geq 0\}$ of the 
real line ${\rm Im}(z)=0$ in $\cc$, and one can assume that $p=\{z=0\}$ in these complex coordinates. 
Take $r(z)=\chi(z)z^{-M+1}$ where $\chi\in C_0^\infty(|z|\leq \eps)$ is a real valued function with $\chi(z)=1$ in $\{|z|<\eps/2\}$. Then 
$\bar{\pl} r$ 
vanishes in the pointed disc $0<|z|<\eps/2$ and it is a compactly supported smooth section of $T^*_{1,0}\bar{\mc{O}}$ outside, it can thus be 
extended trivially to a smooth section of $T^*_{1,0}\bar{\mc{O}}$ denoted by $\omega$. 
We can then use (i) of Corollary \ref{surject}:
there is a function $R$ such that $\bar{\pl} R=-\omega$ and ${\rm Im}(R)|_{\tilde\Gamma_0}=0$, and
so $\bar{\pl}(R+r)=0$ in $\mc{O}\setminus \{p\}$ and $R+r$ is real valued on $\tilde\Gamma_0$ (remark that $r$ is real valued on $\tilde\Gamma_0$)
and has a pole at $p$ of order exactly $M-1$. We conclude that $u:=v(R+r)$ satisfies the desired properties, it 
vanishes at $p$ but with non zero complex derivative at $p$. 
\qed

\begin{lemma}
\label{degeneracy condition}
Let $\tilde\Gamma_0\subset \partial \mc{O}$ be an open set of the boundary. Let $\phi : \mc{O} \to \R$ be a harmonic function with $\phi|_{\tilde\Gamma_0} = 0$. Let $p\in\tilde\Gamma_0$ be a critical point of $\phi$, then it is nondegenerate 
if and only if $\partial_\tau\partial_\nu u \neq 0$ where $\pl_\tau$ and $\pl_\nu$ denote respectively the tangential and normal 
derivatives along the boundary.
\end{lemma}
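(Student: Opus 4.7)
My plan is to work in local isothermal coordinates adapted to the boundary and reduce the Hessian calculation to a simple $2\times 2$ determinant.

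First I would choose local isothermal coordinates $(x,y)$ near $p$ with $\tilde\Gamma_0$ corresponding locally to $\{y=0\}$, $\mc{O}$ to $\{y>0\}$, and $p$ corresponding to the origin. In such coordinates the Riemannian metric is conformally Euclidean, so $\Delta \phi = e^{-2\rho}(\pl_x^2 + \pl_y^2)\phi$, and the harmonicity of $\phi$ becomes $(\pl_x^2+\pl_y^2)\phi = 0$. Moreover $\pl_x$ agrees up to a positive conformal factor with the tangential derivative $\pl_\tau$ and $\pl_y$ agrees up to the same factor with $\pl_\nu$; since we only care about vanishing/nonvanishing these factors are harmless.

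The key observation is that the boundary condition $\phi(x,0)\equiv 0$ forces all purely tangential derivatives to vanish on $\tilde\Gamma_0$:
\begin{equation*}
\pl_x^k \phi(x,0) = 0, \qquad k \geq 0.
\end{equation*}
In particular $\pl_x\phi(p)=0$, so saying that $p$ is a critical point of $\phi$ (restricted to $\bar{\mc{O}}$, say) is exactly the statement $\pl_y\phi(p)=0$. Next I would compute the Hessian of $\phi$ at $p$. From the boundary vanishing, $\pl_x^2\phi(p)=0$. From harmonicity, $\pl_y^2\phi(p) = -\pl_x^2\phi(p) = 0$. The off-diagonal entry is $\pl_x\pl_y\phi(p)$, which is (up to the positive conformal factor) exactly $\pl_\tau\pl_\nu\phi(p)$.

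Therefore the Hessian matrix at $p$ in these coordinates is
\begin{equation*}
\operatorname{Hess}\phi(p) = \begin{pmatrix} 0 & \pl_x\pl_y\phi(p) \\ \pl_x\pl_y\phi(p) & 0\end{pmatrix},
\end{equation*}
whose determinant is $-(\pl_x\pl_y\phi(p))^2$. Since nondegeneracy of the Hessian at a critical point is a coordinate invariant notion, $p$ is nondegenerate if and only if this determinant is nonzero, i.e.\ if and only if $\pl_\tau\pl_\nu\phi(p)\neq 0$. The one mildly subtle point in this argument is verifying that the transverse derivative $\pl_\tau\pl_\nu\phi$ at a boundary point is well defined, but this is clear because $\pl_\nu\phi$ is smooth up to the boundary and is then differentiated along the boundary curve. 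Aside from this observation the proof is a direct two-dimensional calculation, so I do not foresee a genuine obstacle.
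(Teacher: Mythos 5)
Your proof is correct and is essentially the same as the paper's: both straighten the boundary via a conformal (Riemann) map to a half-disc, use the boundary vanishing of $\phi$ to kill $\partial_\tau^2\phi$, invoke harmonicity $(\partial_x^2+\partial_y^2)\phi=0$ to kill $\partial_\nu^2\phi$, and then read nondegeneracy of the Hessian off the remaining off-diagonal entry $\partial_\tau\partial_\nu\phi(p)$.
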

\noindent{\bf Proof}. By Riemman mapping theorem, there is a conformal transformation mapping a neighbourhood of $p$ in $\bar{\mc{O}}$
to a half-disc $D:=\{|z|<\eps,{\rm Im}(z)\geq 0\}$ and $\pl\bar{\mc{O}}=\{{\rm Im}(z)=0\}$ near $p$. Denoting $z=x+iy$, one has
$(\pl_x^2+\pl_y^2)\phi=0$ in $D$ and $\pl^2_x\phi|_{y=0}=0$, which implies $\pl_y^2\phi(p)=0$. Since $\pl_\nu=e^{f}\pl_y$ and 
$\pl_\tau=e^{f}\pl_x$ for some smooth function $f$, and since $d\phi(p)=0$, the conclusion is then straightforward.
\qed\\
Let $N^{*}\pl \bar{\mc{O}}$ be the conormal-bundle of $\pl\bar{\mc{O}}$ and $N^*\tilde\Gamma_0$ be the restriction of this bundle to $\tilde\Gamma_0$. Denote the zero sections of these bundles respectively by $N^*_0\pl\bar{ \mc{O}}$ and $N^{*}_0\tilde\Gamma_0$. 
We now define the map
\[b : \tilde\Gamma_0 \times \mc{H}_{\tilde\Gamma_0}  \to N^{*}\tilde\Gamma_0, \quad b(p,u):=(p,\pl_\nu u).\] 
For a fixed $u\in \mc{H}_{\tilde\Gamma_0}$, we also define  $b_u(\cdot) := b(\cdot, u)$. Simple computations yield the
\begin{lemma}
\label{mixed partial non-vanishing}
Suppose that $p\in\tilde\Gamma_0$ is such that $\partial_\nu u(p) = 0$, then $\partial_\tau\partial_\nu u(p)\neq 0$ if and only if
\[{\rm Image}(D_pb_u) + T_{(p,0)}(N_0^{*}\tilde\Gamma_0) = T_{(p,0)}(N^{*}\tilde\Gamma_0).\]
\end{lemma}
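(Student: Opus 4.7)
The plan is to unwind the definitions and do a direct local-coordinate computation; there is no genuine obstacle here since both sides of the equivalence reduce to the non-vanishing of the same scalar.

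First I would identify $T_{(p,0)}(N^{*}\tilde\Gamma_0)$ carefully. Since $N^{*}\tilde\Gamma_0$ is a real line bundle over the one-dimensional manifold $\tilde\Gamma_0$, it has total dimension $2$, and at any point of the zero section there is a canonical splitting
\[
T_{(p,0)}(N^{*}\tilde\Gamma_0)\;\cong\;T_p\tilde\Gamma_0\oplus N^{*}_p\tilde\Gamma_0,
\]
the first summand being the horizontal (zero-section) direction and the second being the vertical (fiber) direction. Under this splitting, $T_{(p,0)}(N_0^{*}\tilde\Gamma_0)=T_p\tilde\Gamma_0\oplus\{0\}$. Hence, modding out by $T_{(p,0)}(N_0^{*}\tilde\Gamma_0)$ reduces the transversality condition in the statement to the following: the \emph{vertical} component of $D_p b_u$ must span $N^{*}_p\tilde\Gamma_0$.

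Next I compute $D_p b_u$ in local coordinates. Using the conformal coordinate $z=x+iy$ near $p$ furnished by the Riemann mapping theorem (as in the proof of Lemma \ref{degeneracy condition}), one can arrange $\tilde\Gamma_0=\{y=0\}$ near $p$, $p=\{z=0\}$, $\partial_\tau=e^{f}\partial_x$ and $\partial_\nu=e^{f}\partial_y$ for a smooth $f$. The map $b_u$ reads, in the trivialization $N^{*}\tilde\Gamma_0\simeq \tilde\Gamma_0\times\mathbb{R}\cdot dy$,
\[
b_u(x)=\bigl(x,\;\partial_\nu u(x,0)\,dy\bigr).
\]
Since $\partial_\nu u(p)=0$ by hypothesis, differentiating at $p$ gives
\[
D_p b_u(\partial_x)=\bigl(\partial_x,\;\partial_x\partial_\nu u(p)\,dy\bigr)\in T_p\tilde\Gamma_0\oplus N^{*}_p\tilde\Gamma_0,
\]
and the vertical component is $\partial_x\partial_\nu u(p)\,dy$, which is a non-zero multiple of $\partial_\tau\partial_\nu u(p)\,dy$.

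Finally I conclude: the sum $\mathrm{Image}(D_p b_u)+T_{(p,0)}(N_0^{*}\tilde\Gamma_0)$ fills all of $T_{(p,0)}(N^{*}\tilde\Gamma_0)$ if and only if $D_p b_u$ has non-trivial vertical component, i.e.\ precisely when $\partial_\tau\partial_\nu u(p)\neq 0$. This proves both implications simultaneously and completes the proof.
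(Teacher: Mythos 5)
Your proof is correct and follows essentially the same route as the paper's: both rely on the canonical splitting $T_{(p,0)}(N^{*}\tilde\Gamma_0)\simeq T_p\tilde\Gamma_0\oplus N^{*}_p\tilde\Gamma_0$ and the identification of the vertical component of $D_pb_u$ with $\partial_\tau\partial_\nu u(p)$. Your local-coordinate derivation of the formula $w\mapsto(w,\partial_\tau\partial_\nu u(p)w)$ is a small amplification of what the paper simply asserts.
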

\noindent{\bf Proof}. This can be seen by the fact that for all $p\in \tilde\Gamma_0$ such that $b_u(p) = (p,0)$, 
\[D_pb_u : T_p\tilde\Gamma_0\to T_{(p,0)}(N^{*}\tilde\Gamma_0) \simeq T_p\tilde\Gamma_0 \oplus N^{*}_p\tilde\Gamma_0\] 
is given by $w\mapsto (w, \partial_\tau\partial_\nu u(p)w)$.\qed\\

At a point $(p,u)$ such that $b(p,u) = 0$, a simple computation yields that the differential $D_{(p,u)}b : T_p\tilde\Gamma_0 \times \mc{H}_{\tilde\Gamma_0} \to T_{(p,\pl_\nu u(p))}(N^{*}\tilde\Gamma_0)$ is given by $(w,u') \mapsto (w, \partial_\tau\partial_\nu u(p)w + \partial_\nu u'(p))$. This observation combined with Lemma \ref{nonvanishing normal derivative} shows that for all $(p,u) \in \tilde\Gamma_0\times \mc{H}_{\tilde\Gamma_0}$ such that $b(p,u) = (p,0)$, $b$ is transverse to $N^{*}_0\tilde\Gamma_0$ at $(p,0)$. Now we can apply Theorem \ref{transversality} with $X = \tilde\Gamma_0$, $W = N^{*}\tilde\Gamma_0$ and $W' = N^{*}_0\tilde\Gamma_0$ we see that the set $\{u\in \mc{H}_{\tilde\Gamma_0}; b_u \textrm{ is transverse to }N^{*}_0\tilde\Gamma_0\}$ is residual in $\mc{H}_{\tilde\Gamma_0}$. In view of Lemmas \ref{degeneracy condition}, we deduce the 
\begin{lemma}
\label{boundarymorse}
The set of functions $u\in \mc{H}_{\tilde\Gamma_0}$ such that $u$ has no degenerate critical point on $\tilde\Gamma_0$ is residual in $\mc{H}_{\tilde\Gamma_0}$.
\end{lemma}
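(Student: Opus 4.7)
The plan is to deduce Lemma \ref{boundarymorse} as a direct application of the transversality theorem \ref{transversality} to the evaluation map $b:\tilde\Gamma_0\times \mc{H}_{\tilde\Gamma_0}\to N^*\tilde\Gamma_0$, combining Lemmas \ref{degeneracy condition}, \ref{nonvanishing normal derivative}, and \ref{mixed partial non-vanishing} that have been set up for exactly this purpose.

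First I would translate the statement into a transversality condition. Since any $u\in\mc{H}_{\tilde\Gamma_0}$ vanishes on $\tilde\Gamma_0$, its tangential derivative $\pl_\tau u$ also vanishes on $\tilde\Gamma_0$, so the critical points of $u$ lying on $\tilde\Gamma_0$ are precisely the points $p$ where $\pl_\nu u(p)=0$, i.e.\ where $b_u(p)\in N_0^*\tilde\Gamma_0$. By Lemma \ref{degeneracy condition} such a critical point is nondegenerate if and only if $\pl_\tau\pl_\nu u(p)\neq 0$, and by Lemma \ref{mixed partial non-vanishing} the latter holds if and only if $b_u$ is transverse to $N_0^*\tilde\Gamma_0$ at $p$. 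Hence it suffices to prove that the set
\[\mc{R}:=\{u\in \mc{H}_{\tilde\Gamma_0}\mid b_u \text{ is transverse to } N_0^*\tilde\Gamma_0\}\]
is residual in $\mc{H}_{\tilde\Gamma_0}$.

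Next I would verify the hypothesis of Theorem \ref{transversality}, that $b$ itself is transverse to $N^*_0\tilde\Gamma_0$. Using the differential computed right before the statement of the lemma, at a pair $(p,u)$ with $b(p,u)=(p,0)$ one has
\[D_{(p,u)}b(w,u')=(w,\;\pl_\tau\pl_\nu u(p)\,w+\pl_\nu u'(p)),\]
so transversality follows once the evaluation map $\mc{H}_{\tilde\Gamma_0}\to N^*_p\tilde\Gamma_0$, $u'\mapsto \pl_\nu u'(p)$, is surjective. Lemma \ref{nonvanishing normal derivative} supplies a holomorphic $v\in C^k(\bar{\mc{O}})$ with ${\rm Im}(v)|_{\tilde\Gamma_0}=0$ and $\pl v(p)\neq 0$; setting $u':={\rm Im}(v)\in\mc{H}_{\tilde\Gamma_0}$ and using that $u'|_{\tilde\Gamma_0}=0$ forces $\pl_\tau u'(p)=0$, so that $du'(p)$ is normal and nonzero (by the Cauchy-Riemann equations applied to $v$ at $p$). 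Scaling $u'$ gives any desired value of $\pl_\nu u'(p)$, so $b$ is transverse to $N^*_0\tilde\Gamma_0$ globally.

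Finally I would apply Theorem \ref{transversality} with $X=\tilde\Gamma_0$, $W=N^*\tilde\Gamma_0$, $W'=N^*_0\tilde\Gamma_0$; the required inequality $k>\max(1,\dim X-\dim W+\dim W')=\max(1,0)=1$ holds since we have fixed $k>2$. The theorem then asserts that $\{u\in\mc{H}_{\tilde\Gamma_0}\mid b_u\text{ transverse to }N_0^*\tilde\Gamma_0\}=\mc{R}$ is residual, completing the proof. The only delicate point I foresee is the Cauchy-Riemann calculation ensuring that the holomorphic function furnished by Lemma \ref{nonvanishing normal derivative} produces a harmonic $u'\in\mc{H}_{\tilde\Gamma_0}$ whose normal derivative at $p$ is nonzero; everything else is bookkeeping of the three preceding lemmas.
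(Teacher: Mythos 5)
Your proof is correct and follows essentially the same route as the paper: the paper's argument is precisely the paragraph preceding the lemma, which computes $D_{(p,u)}b$, invokes Lemma \ref{nonvanishing normal derivative} to get transversality of $b$ to $N^*_0\tilde\Gamma_0$, applies Theorem \ref{transversality}, and concludes via Lemma \ref{degeneracy condition} (and \ref{mixed partial non-vanishing}). You have merely supplied a few details the paper leaves implicit, namely the Cauchy--Riemann check that ${\rm Im}(v)$ has nonvanishing normal derivative and the dimension-count hypothesis of the transversality theorem.
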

Observing the general fact that finite intersection of residual sets remains residual, the combination of Lemma \ref{boundarymorse} and Lemma \ref{morsedense} yields
\begin{coro}
\label{boundarymorse dense}
The set of functions $u\in\mc{H}_{\tilde\Gamma_0}$ which are Morse in $\mc{O}$ and have no degenerate critical points on $\tilde\Gamma_0$ 
is residual in $\mc{H}_{\tilde\Gamma_0}$ with respect to the $C^k(\bar{\mc{O}})$ topology. In particular, it is dense.
\end{coro}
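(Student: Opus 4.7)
\medskip
\noindent\textbf{Proof proposal for Corollary \ref{boundarymorse dense}.} The plan is to simply combine the two residuality statements already established, namely Lemma \ref{morsedense} (interior Morse condition) and Lemma \ref{boundarymorse} (non-degeneracy of the critical points that happen to lie on $\tilde\Gamma_0$), and invoke the Baire category theorem to conclude density.

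First, I would note that $\mathcal{H}_{\tilde\Gamma_0}$ is a closed subspace of the Banach space $C^k(\bar{\mathcal{O}})$: it is cut out by the continuous linear conditions $\Delta u=0$, $L(u)=0$, and $u\mid_{\tilde\Gamma_0}=0$. Being a closed subspace of a Banach space, $\mathcal{H}_{\tilde\Gamma_0}$ is itself a complete metric space, hence a Baire space. This is the only structural input needed beyond the two lemmas.

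Next, let
\[
A:=\{u\in \mathcal{H}_{\tilde\Gamma_0} : u \text{ is Morse in } \mathcal{O}\},\qquad B:=\{u\in \mathcal{H}_{\tilde\Gamma_0}: u \text{ has no degenerate critical point on } \tilde\Gamma_0\}.
\]
By Lemma \ref{morsedense}, $A$ is residual in $\mathcal{H}_{\tilde\Gamma_0}$; by Lemma \ref{boundarymorse}, $B$ is residual in $\mathcal{H}_{\tilde\Gamma_0}$. Writing $A=\bigcap_{n} U_n$ and $B=\bigcap_n V_n$ with each $U_n, V_n$ open and dense, the intersection $A\cap B=\bigcap_n(U_n\cap V_n)$ is again a countable intersection of open dense sets, hence residual. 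The set in the statement of the corollary is exactly $A\cap B$, so it is residual. Finally, by the Baire category theorem applied to the Baire space $\mathcal{H}_{\tilde\Gamma_0}$, every residual set is dense, which yields the last sentence.

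The only possible subtlety, and hence the ``main obstacle,'' is to ensure that residuality is genuinely preserved under our use of ambient topology: one must verify that the two lemmas produce residual subsets of the \emph{same} space $\mathcal{H}_{\tilde\Gamma_0}$ (which they do, by inspection of their statements), and that $\mathcal{H}_{\tilde\Gamma_0}$ is Baire in the relative $C^k$ topology. Both are essentially bookkeeping; no new analytic input beyond Lemmas \ref{morsedense} and \ref{boundarymorse} is required.
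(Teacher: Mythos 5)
Your proof is correct and matches the paper's own argument: the paper dispatches the corollary in one line by noting that a finite intersection of residual sets is residual, combining Lemma \ref{morsedense} with Lemma \ref{boundarymorse}, and then invoking density of residual sets in a Baire space. Your slightly more explicit discussion of why $\mc{H}_{\tilde\Gamma_0}$ is Baire (closed subspace of $C^k(\bar{\mc{O}})$) is a harmless elaboration of the same reasoning.
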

We are now in a position to give a proof of the main proposition of this section.\\\\
\noindent{\bf Proof of Proposition \ref{criticalpoints}}. As explained above, choose $\mc{O}$ in such a way that $\bar{\mc{O}}$ is a smooth surface with boundary, containing $M$, that $\Gamma_0'\subset\partial \mc{O}$ and $\mc{O}$ contains $\partial M \backslash \overline{\Gamma_0'}$. Let $\tilde\Gamma_0$ be an open subset of the boundary of $\bar{\mc{O}}$ such that the closure of $\Gamma_0'$ is contained in $\tilde\Gamma_0$ and $\partial\bar{\mc{O}}\backslash\overline{\tilde\Gamma_0}\not=\emptyset$. Let $\hat p$ be an interior point of $M$. By lemma \ref{control the zero}, there exists a holomorphic function $f = u + i v$ on $\bar{\mc{O}}$ such that $f$ is purely real on $\tilde\Gamma_0$, $v(\hat p)= 1$, and $df(\hat p) = 0$ (thus $v\in \mc{H}_{\tilde\Gamma_0}$).

By Corollary \ref{boundarymorse dense}, there exist a sequence $(v_j)_j$ of Morse functions $v_j \in \mc{H}_{\tilde\Gamma_0} $ 
such that $v_j\to v$ in $C^k(M)$ for any fixed $k$ large. By Cauchy integral formula, there exist harmonic conjugates $u_j$ of $v_j$ such that $f_j := u_j + i v_j \to f$ in $C^k(M)$. Let $\eps>0$ be small and let $U\subset\mc{O}$ 
be a neighbourhood containing $p$ and no other critical points of $f$,
and with boundary a smooth circle of radius $\eps$. In complex local coordinates near $\hat p$, 
we can identify $\pl f$ and $\pl f_j$ to holomorphic functions on an open set of $\cc$.
Then by Rouche's theorem, it is clear that $\pl{f_j}$ has precisely one zero in $U$ and $v_j$ never vanishes in $U$ if $j$ is large enough.

Fix $\Phi$ to be one of the $f_j$ for $j$ large enough. By construction, $\Phi$ is Morse in $\mc{O}$ and has no degenerate critical points on $\overline{\Gamma_0'} \subset\tilde\Gamma_0$. 
We notice that, since the imaginary part of $\Phi$ vanishes on all of $\tilde\Gamma_0$, it is clear from
the reflection principle applied after using the Riemann mapping theorem (as in the proof of Lemma \ref{degeneracy condition})
that no point on $\overline{\Gamma_0'}\subset \tilde\Gamma_0$ can be an accumulation point for critical points. Now $\partial M \backslash \overline{\Gamma_0'}$ is contained in the interior of $\mc{O}$ and therefore no points on  $\partial M \backslash \overline{\Gamma_0'}$ can be an accumulation point of critical points. Since $\Phi$ is Morse in the interior of $\mc{O}$, there are no degenerate critical points on $\partial M \backslash \overline{\Gamma_0'}$. This ends the proof.\qed

\begin{subsection}{Doubling of Riemann Surfaces}
\label{sec:double}
We describe the construction of a double of a bordered Riemann surface outlined in \cite{FK}. Let $M$ and $M'$ be two copies of a bordered Riemann surface. We construct the closed surface $M^D := \overline{M \cup M'}$ by identifying points $p\in \partial M$ with its copy $p'\in \partial M'$. We take in the interior of $M$ the existing holomorphic coordinates while on $M'$ the holomorphic coordinates are precisely the complex conjugate of those on $M$. To construct coordinate charts along the boundary $\partial M$, if $U$ is a small neighbourhood in $M^D$ containing $p\in \partial M$ such that $U \cap \partial M$ is an open segment we take a holomorphic chart which maps $U\cap M$ conformally to the upper half plane such that $U\cap \partial M$ is mapped to a segment of the real axis. We can then apply the reflection principle to obtain a holomorphic coordinate chart around $p\in \partial M$. 

Let $M$ be a bordered Riemann surface which is isometric to the flat cylinder $([0,\epsilon] \times S^1, dt^2 + d\theta^2)$ near each of its boundary components. If $q_0\in \partial M$, define $M'_0\subset M$ by removing a small interior closed half-disk around $q_0$ of radius $\delta >0$ and let $\Gamma'$ be defined by ${\Gamma'} := \partial M_0' \cap \partial M$. If one denote by $\dot M^D :=M^D\backslash \bar B_\delta(q_0)$ with $B_\delta(q_0) := \{p\in M^D\mid d(p,q_0) <\delta\}$, then one has that $M_0' = \dot M^D\cap M$. That is, $M_0'$ is half of the surface obtained by removing a whole disk from $M^D$.

On every doubled Riemann surface $M^D$ there exists an anti-conformal involution $R$ satisfying $R(M) = M'$ and is the identity on the boundary $\partial M$. Since the metric $g$ on $M$ is assumed to be of the form $dt^2 + d\theta^2$ near $\partial M$, it extends smoothly to a metric on $M^D$ by the relation $R^*g = g$. It is easily checked that if $\Phi$ is a holomorphic function on $M_0'$ satisfying the boundary condition $\Phi\mid_{\Gamma'} \in \R$, then $\Phi$ extends to be a holomorphic function on $\dot M^D$ by the relation ${(R^* \Phi)} = \bar\Phi$. Similarly, if $\eta$ is a holomorphic 1-form with boundary condition $\iota^*_{\partial M_0'}\eta \mid_{\Gamma'} \in \R$, then $\eta$ extends to be a holomorphically to $\dot M^D$ by the relation $R^*\eta = \bar \eta$.

Conversely, if $\Phi$ is a holomorphic function on $\dot M^D$, we say it is conjugate even/odd if ${(R^* \Phi)} = \pm\bar\Phi$ and we adopt the same terminology for holomorphic forms. It is easily seen that the set of even holomorphic functions/1-forms are precisely the reflected ones described above.

\end{subsection}

\begin{subsection}{Boundary Values of Meromorphic Functions}
In this section we characterize the boundary value of holomorphic/meromorphic functions on the surface $\dot M^D$. These characterizations will be useful in boundary identification and in proving Proposition \ref{new boundary integral id}. We begin by stating a well-understood orthogonality condition for boundary values of holomorphic functions (see eg. \cite{gafa}). 
\begin{proposition}
\label{boundary value of holomorphic}
Let $f\in W^{2-\frac{1}{p},p}(\partial \dot M^D)$ be a complex valued function. Then $f$ is the restriction of a holomorphic function which is differentiable up to the boundary if and only if
\[\int_{\partial \dot M^D} f i_{\partial \dot M^D}^*\eta = 0\]
for all 1-forms $\eta\in C^{\infty}(\dot M^D ; T^*_{1,0}\dot M^D)$ satisfying $\bar\partial\eta = 0$.
\end{proposition}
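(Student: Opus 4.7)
The plan is to prove both implications via Stokes' theorem, with the converse reducing to a Dirichlet problem for $\bar\partial$ on the bordered Riemann surface $\dot M^D$.

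For the forward direction, suppose $f = F|_{\partial \dot M^D}$ with $F$ holomorphic and smooth up to the boundary. Given any $(1,0)$-form $\eta$ with $\bar\partial \eta = 0$, the product $F\eta$ is a $(1,0)$-form satisfying $d(F\eta) = \bar\partial(F\eta) = 0$, since $(2,0)$-forms on a Riemann surface automatically vanish and both factors are $\bar\partial$-closed. Stokes' theorem then gives $\int_{\partial \dot M^D} f\, i^*_{\partial \dot M^D}\eta = \int_{\dot M^D} d(F\eta) = 0$.

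For the converse, I would first extend $f$ to a function $F_0 \in W^{2,p}(\dot M^D)$ using a bounded trace-inverse operator. Applying Stokes to $F_0\eta$ then yields
\[\int_{\partial \dot M^D} f\, i^*_{\partial \dot M^D}\eta = \int_{\dot M^D} \bar\partial F_0 \wedge \eta\]
for every holomorphic $(1,0)$-form $\eta$. Thus the hypothesis is equivalent to the statement that $\omega := \bar\partial F_0 \in W^{1,p}(\dot M^D; T^*_{0,1}\dot M^D)$ pairs trivially, via the wedge integral, with every holomorphic $(1,0)$-form. I would then invoke a $\bar\partial$-Dirichlet solvability statement to produce $G \in W^{2,p}(\dot M^D)$ with $\bar\partial G = \omega$ and $G|_{\partial \dot M^D} = 0$. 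Integration by parts shows that for $u \in W^{1,p}_0(\dot M^D)$ and smooth $(1,0)$-forms $\alpha$, one has $\int_{\dot M^D} \bar\partial u \wedge \alpha = -\int_{\dot M^D} u\, \bar\partial \alpha$, so the $L^2$-adjoint of the Dirichlet $\bar\partial$ operator on functions is the $\bar\partial$ operator on $(1,0)$-forms, whose kernel is exactly the (finite-dimensional) space of holomorphic $(1,0)$-forms. Standard Fredholm theory then identifies the above orthogonality as the precise solvability criterion. Setting $F := F_0 - G$ gives a holomorphic function with $F|_{\partial \dot M^D} = f$, and elliptic regularity for $\bar\partial$ combined with $f \in W^{2-1/p,p}(\partial \dot M^D)$ forces $F \in W^{2,p}(\dot M^D)$, so $F$ is differentiable up to the boundary.

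The main obstacle is the Dirichlet $\bar\partial$-solvability with sharp Sobolev regularity: the condition $G|_{\partial \dot M^D} = 0$ corresponds to a rank-zero subbundle of $E|_{\partial \dot M^D}$, which is not totally real, so Theorem \ref{boundaryrr} does not apply verbatim. I would handle this either by approximating the Dirichlet condition with a sequence of totally real subbundles $F_n = e^{in\theta}\R$ whose Maslov indices tend to $-\infty$ (using the orthogonality hypothesis to control the obstructions uniformly), or by appealing directly to the Hodge decomposition on bordered Riemann surfaces, where the duality described above makes the cokernel identification transparent. Once the solvability is established, everything else is bookkeeping.
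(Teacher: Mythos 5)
Your forward direction is correct: it is the standard Stokes argument. The converse contains a genuine gap, and it lies precisely in the step you flag as ``the main obstacle.'' The concrete error is the assertion that the kernel of $\bar\partial$ on $(1,0)$-forms --- equivalently, the space of holomorphic $(1,0)$-forms on $\dot M^D$ smooth up to the boundary --- is finite-dimensional. On a bordered Riemann surface this space is \emph{infinite}-dimensional (already on the disc one has $z^n\,dz$ for every $n\ge 0$), so the Dirichlet problem $\bar\partial G=\omega$, $G|_{\partial \dot M^D}=0$ is not Fredholm: the boundary condition is over-determined, the cokernel is infinite-dimensional, and ``standard Fredholm theory'' does not apply. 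Both of your proposed work-arounds inherit this. Approximating by totally real subbundles with Maslov index $\to-\infty$ yields a sequence of Fredholm operators whose cokernel dimensions grow without bound (Theorem~\ref{boundaryrr} even tells you surjectivity fails once $\mu(E,F)+2\chi\le 0$), and you have not explained how to pass to a limit that forces $G|_{\partial\dot M^D}=0$; the appeal to a Hodge decomposition gestures at the right object but presupposes exactly the structure that needs to be proved.

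What the argument actually needs in place of Fredholmness is a \emph{closed-range} statement: the operator $\bar\partial : H^1_0(\dot M^D)\to L^2(\dot M^D;T^*_{0,1}\dot M^D)$ is injective with closed range. This follows from the Poincar\'e inequality on $H^1_0$ together with the identity $\|\partial u\|_{L^2}=\|\bar\partial u\|_{L^2}$ for $u\in H^1_0$ (integrate $du\wedge d\bar u=(|u_z|^2-|u_{\bar z}|^2)\,dz\wedge d\bar z$ and use $u|_{\partial\dot M^D}=0$), which gives $\|u\|_{H^1}\le C\|\bar\partial u\|_{L^2}$. Once closed range is in hand, your integration-by-parts duality correctly identifies the orthogonal complement of the range as the (infinite-dimensional) space of conjugates of holomorphic $(1,0)$-forms, and the solvability criterion --- hence the proposition --- follows; higher regularity of $F=F_0-G$ is then a consequence of $F$ being holomorphic with boundary trace $f\in W^{2-1/p,p}(\partial\dot M^D)$. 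So your reduction and duality computation are sound, but the solvability lemma \emph{is} the content of the proposition, and the justification you give for it --- finite-dimensional cokernel plus Fredholm theory --- is wrong and must be replaced by a coercivity/closed-range argument or by a Cauchy-kernel/Plemelj construction on the surface. (The paper itself only cites this fact to \cite{gafa} and gives no proof, so there is no in-paper argument to compare against.)
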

We would like to generalize this statement to that of meromorphic functions with prescribed poles of certain order. As such we consider the following
\begin{lemma}
\label{meromorphic condition 1}
Let $\{p_0,.. p_N\} \subset \dot M^D\cup \partial \dot M^D$ be a discrete set of points. If $f\in W^{2-\frac{1}{p},p}(\partial \dot M^D)$ is a complex valued function satisfying
\[\int_{\partial \dot M^D} f i_{\partial \dot M^D}^*\eta = 0\]
for all holomorphic 1-forms $\eta\in C^{\infty}(\dot M^D ; T^*_{1,0}\dot M^D)$ with the property $\eta(p_j) = 0$ to $k$-th order, 
then $f$ is the restriction of a meromorphic function which is smooth up to the boundary and whose only poles lie in the interior points $\{p_0,..,p_N\} \cap \dot M^D$. Furthermore the poles are of order at most $k$.
\end{lemma}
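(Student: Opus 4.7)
My strategy is to reduce the claim to Proposition \ref{boundary value of holomorphic}: I will construct a meromorphic function $g$ on $\dot M^D$ whose poles are at the interior points $\{p_j\}\cap\dot M^D$, of order at most $k$, and such that $f-g|_{\partial\dot M^D}$ satisfies the orthogonality hypothesis of Proposition \ref{boundary value of holomorphic}. Then $f-g|_{\partial\dot M^D}$ will extend to a holomorphic function $h$ on $\dot M^D$, smooth up to the boundary, and $F:=g+h$ will be the desired meromorphic extension of $f$.

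The first step is to construct, for each interior $p_j\in\dot M^D$ and each integer $m\in\{1,\dots,k\}$, a meromorphic function $\phi_{j,m}$ on $\dot M^D$, smooth up to the boundary, with principal part $z_j^{-m}$ at $p_j$ in a local coordinate $z_j$ and holomorphic elsewhere. This is a Mittag--Leffler-type construction in exactly the same spirit as in the proof of Lemma \ref{control the zero}: starting from the local model $\chi\, z_j^{-m}$ with a cutoff $\chi$ supported near $p_j$, the form $\bar\partial(\chi\, z_j^{-m})$ extends by zero to a smooth $(0,1)$-form $\omega_{j,m}$ on $\dot M^D$, and Proposition \ref{surject} produces $R_{j,m}$ solving $\bar\partial R_{j,m}=-\omega_{j,m}$, so that $\phi_{j,m}:=\chi\, z_j^{-m}+R_{j,m}$ has the required singular part.

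Next I apply Stokes' theorem on $\dot M^D$ minus small disks around the interior $p_j$'s to obtain, for any $g$ in the span of the $\phi_{j,m}$ plus holomorphic functions, and any holomorphic 1-form $\eta$ on $\dot M^D$ smooth up to the boundary,
\[
\int_{\partial\dot M^D} g\,\iota^*\eta \;=\; 2\pi i\sum_{p_j\in\dot M^D}\operatorname{Res}_{p_j}(g\eta).
\]
In particular, both $\Phi_f(\eta):=\int_{\partial\dot M^D}f\,\iota^*\eta$ (by hypothesis) and $\Phi_{g|_\partial}$ vanish on the subspace $\mathcal{E}_k$ of holomorphic 1-forms vanishing to order $k$ at every $p_j$, hence descend to linear functionals on the finite-dimensional quotient $\mathcal{E}/\mathcal{E}_k$. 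The local residue pairing $(b_{-k},\dots,b_{-1})\leftrightarrow(a_0,\dots,a_{k-1})$ between principal parts of $g$ and jets of $\eta$ at each interior $p_j$ is non-degenerate (its matrix is anti-diagonal), so after choosing coefficients $c_{j,m}$ appropriately, $g:=\sum_{j,m}c_{j,m}\phi_{j,m}$ can be arranged to satisfy $\Phi_{g|_\partial}=\Phi_f$. With this matching achieved, $f-g|_{\partial\dot M^D}$ satisfies the hypothesis of Proposition \ref{boundary value of holomorphic} and extends holomorphically to $h$, completing the construction of $F=g+h$.

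The main obstacle is the mismatch in domains of dependence: the image of $g\mapsto\Phi_{g|_\partial}$ in $(\mathcal{E}/\mathcal{E}_k)^*$ depends on $\eta$ only through its $(k-1)$-jets at the \emph{interior} $p_j$'s, whereas $\Phi_f$ could a priori depend also on jets at boundary $p_j$'s. I expect this mismatch to be eliminated by the Sobolev regularity $f\in W^{2-1/p,p}(\partial\dot M^D)$ via a local analysis near each boundary $p_j$: after conformally straightening the boundary (as in the proof of Lemma \ref{degeneracy condition}) and performing a Fourier/tail computation (the model case being that on the unit disk, where the constraint forces only finitely many negative Fourier modes to survive and the regularity kills the constant tail), the regularity of $f$ forces the boundary-point components of $\Phi_f$ to vanish, so $\Phi_f$ genuinely lies in the image of the residue map and the matching step can be carried out.
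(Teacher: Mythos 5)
Your approach is genuinely different from the paper's. The paper's proof proceeds by \emph{multiplication}: it picks a holomorphic function $a$ on $\dot M^D$, smooth up to the boundary, vanishing to exactly order $k$ at each $p_j$ and nowhere else on $\{p_j\}$; then $a\eta$ runs over (a large subspace of) the admissible $\eta$'s, so the hypothesis says that $af$ is orthogonal to all holomorphic $1$-forms, hence (by Proposition \ref{boundary value of holomorphic}) $af$ extends to a holomorphic $G_a$, and $f=G_a/a$. A second choice $a'$ with no extra common interior zeros kills the extraneous poles. Your proposal instead proceeds by \emph{subtraction}: build a meromorphic $g$ with poles only at the interior $p_j$'s that matches $\Phi_f$ on $\mathcal{E}/\mathcal{E}_k$, so that $f-g\mid_\partial$ meets the hypothesis of Proposition \ref{boundary value of holomorphic} directly. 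The subtraction picture is in some ways tidier (you never create extraneous poles, so no second function $a'$ is needed), but it forces you to confront the ``boundary-jet'' issue head on, which the paper's argument handles silently.

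The gap you flag at the end is real and is not a technicality you can postpone. Your strategy only works if $\Phi_f$, viewed as a functional on $\mathcal{E}/\mathcal{E}_k$, is supported on the \emph{interior} jet components, because $\Phi_{g\mid_\partial}$ (being a sum of residues at interior poles) can never produce anything in the boundary-jet directions; without that vanishing the matching step literally cannot be carried out, and $f-g\mid_\partial$ will still fail the hypothesis of Proposition \ref{boundary value of holomorphic}. The claim is true, and it is exactly the point where the regularity $f\in W^{2-1/p,p}(\partial\dot M^D)\hookrightarrow C^0(\partial\dot M^D)$ enters: but ``I expect this to be eliminated by a Fourier/tail computation'' is a placeholder, not a proof. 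For comparison, in the paper's argument the same fact appears painlessly: since $a$ vanishes to order $k$ at a boundary $p_j$ and $f$ is continuous, $G_a\mid_\partial=af\mid_\partial$ vanishes to order $k$ there, hence so does the holomorphic function $G_a$ itself (a holomorphic function $C^k$ up to the boundary whose boundary trace vanishes to order $k$ at a boundary point vanishes to order $k$ there), and therefore $G_a/a$ is automatically regular at boundary $p_j$'s. If you want to run your subtraction argument, this is the step you need to isolate and prove: one clean route is to first carry out the paper's multiplication trick with one function $a$ purely to establish that the boundary-jet components of $\Phi_f$ vanish, and only then do your residue matching --- but at that point you have essentially re-derived the paper's proof and your construction of the $\phi_{j,m}$'s becomes redundant.

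One smaller point: when you invoke ``the local residue pairing $\dots$ is non-degenerate (its matrix is anti-diagonal)'' you are implicitly assuming that the jet evaluation map $\mathcal{E}\to\bigoplus_j J^{k-1}_{p_j}$ is surjective, i.e.\ that holomorphic $1$-forms smooth up to the boundary can realize arbitrary $(k-1)$-jets at the $p_j$'s. That is true here and follows from a version of Lemma \ref{control the zero} for $1$-forms (or from the Maslov-index argument in Theorem \ref{boundaryrr}), but you should say so, since without it the ``can be arranged'' in your matching step is not justified.
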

\noindent{\bf Proof.} Let $a$ be a holomorphic function which is smooth up to the boundary with isolated zeros on $\dot M^D\cup \partial \dot M^D$ such that $a$ vanishes to exactly $k$-th order at $\{p_0,.., p_N\}$. Such functions can be constructed by compactly embedding $\dot M^D$ into a slightly larger surface with boundaries and apply Lemma \ref{control the zero}. If $f\in W^{2-\frac{1}{p},p}(\partial \dot M^D)$ is a complex function satisfying the hypothesis then one has 
\[\int_{\partial \dot M^D} (af) i_{\partial \dot M^D}^*\eta = 0\]
for all holomorphic 1-forms $\eta$. By Proposition \ref{boundary value of holomorphic} we have that $af \in W^{2-\frac{1}{p},p}(\partial \dot M^D)$ extends to a holomorphic function which we denote by $G_a $. 
Clearly, \[f = \frac{G_a}{a}\mid_{\partial \dot M^D} \in W^{2-\frac{1}{p},p}(\partial \dot M^D)\] is the restriction of the meromorphic function $\frac{G_a}{a}$ and since the zeros of $a$ are isolated, this meromorphic function is continuous up to the boundary. As such, the singularities of $\frac{G_a}{a}$ are precisely the interior zeros of $a$.

Let us now consider another holomorphic function $a'$ with isolated zeroes vanishing exactly to $k$-th order at $\{p_0,.., p_N\}$. By using Lemma \ref{control the zero}, we may construct $a'$ in such a way that $a$ and $a'$ do not have common zeroes in the interior other than $\{p_0,.., p_N\}\cap \dot M^D$. We repeat the above argument for $a'$ to show that $f =\frac{G_{a'}}{a'}\mid_{\partial \dot M^D}  \in W^{2-\frac{1}{p},p}(\partial \dot M^D)$ for some holomorphic function $G_{a'}$.

Unique continuation for meromorphic functions forces the identity $\frac{G_{a'}}{a'} = \frac{G_a}{a}$. The fact that the only common interior zeroes for $a$ and $a'$ are $\{p_0,.., p_N\}\cap M_0'$ ensures that they are the only poles and that they are of order at most $k$. Thus we conclude that $f$ extends to a meromorphic function differentiable up to the boundary whose only poles are $\{p_0,.., p_N\} \cap \dot M^D$ of degree at most $k$.\qed

Observe that if $R$ is the involution defined in Section \ref{sec:double}, then every holomorphic function $\Phi$ and 1-form $\eta$ can be decomposed into their conjugate even and odd part by writing \[\Phi = \frac{\Phi +\overline{(R^* \Phi)}}{2} + \frac{\Phi - \overline{(R^* \Phi)}}{2}\ \  {\rm and}\ \  \eta = \frac{\eta +\overline{(R^* \eta)}}{2} + \frac{\eta - \overline{(R^* \eta)}}{2}.\] As one can transform between conjugate even and odd functions via multiplication with $i\in \C$, one has that a smooth function $f\in W^{2-\frac{1}{p},p}(\partial \dot M^D)$ satisfies $\int_{\partial \dot M^D} f \iota_{\partial\dot M^D}\eta = 0$ for all conjugate {\em even} holomorphic 1-forms vanishing to $k$-th order at $\{p_1,..,p_N, R(p_1),.., R(p_N)\} \subset \dot M^D\cup \partial \dot M^D$ iff 
$\int_{\partial \dot M^D} f \iota_{\partial\dot M^D}\eta = 0$ for all holomorphic 1-forms vanishing to $k$-th order at the same points.

This discussion combined with Lemma \ref{meromorphic condition 1} gives the following condition for being the boundary value of a meromorphic function on $\dot M^D$
\begin{lemma}
\label{perpendicular to all conjugate even}
Let $f\in W^{2-\frac{1}{p},p}(\partial \dot M^D)$ and $\{p_1,..,p_N, R(p_1),..,R(p_N)\}$ be a discrete set of points in $\dot M^D\cup \partial \dot M^D$. The function $f$ is the boundary value of a meromorphic function in $\dot M^D$ with poles at $\{p_1,..,p_N, R(p_1),..,R(p_N)\}\cap \dot M^D$ of at most order $k$ if $\int_{\partial \dot M^D} f \iota^*_{\partial\dot M^D}\eta = 0$ for all conjugate even holomorphic 1-forms $\eta$ vanishing to order $k$ at $\{p_1,..,p_N, R(p_1),..,R(p_N)\}$.
\end{lemma}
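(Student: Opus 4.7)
The plan is to deduce the lemma directly from Lemma \ref{meromorphic condition 1} by exploiting the symmetry already foreshadowed in the paragraph preceding the statement. Setting $\mathcal{P}:=\{p_1,\dots,p_N,R(p_1),\dots,R(p_N)\}$, which is $R$-invariant by construction, the heart of the argument will be to show that the orthogonality of $f$ against conjugate even holomorphic 1-forms vanishing to order $k$ on $\mathcal{P}$ is in fact equivalent to its orthogonality against \emph{all} holomorphic 1-forms with the same vanishing condition. Once this equivalence is in hand, Lemma \ref{meromorphic condition 1} applied with pole set $\mathcal{P}$ will yield the desired meromorphic extension of $f$, with poles of order at most $k$ confined to the interior points of $\mathcal{P}$.

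To establish the equivalence, I would take an arbitrary holomorphic $(1,0)$-form $\eta$ vanishing to order $k$ at every point of $\mathcal{P}$ and split it as
\[\eta = \eta_e + \eta_o, \qquad \eta_e := \tfrac{1}{2}(\eta + \overline{R^*\eta}), \qquad \eta_o := \tfrac{1}{2}(\eta - \overline{R^*\eta}).\]
Since $R$ is anti-conformal, $R^*$ swaps $T^*_{1,0}$ with $T^*_{0,1}$ and subsequent conjugation swaps them back, so $\overline{R^*\eta}$ remains a $(1,0)$-form; the identity $R^*\bar\partial = \partial R^*$ together with a further conjugation then shows it is holomorphic. Working in local coordinates where $R$ has the form $z\mapsto \bar z$, if $\eta = h(z)\,dz$ vanishes to order $k$ at $p$, then $\overline{R^*\eta} = \overline{h(\bar z)}\,dz$ vanishes to order $k$ at $R(p)$; since $\mathcal{P}$ is $R$-invariant this guarantees $\overline{R^*\eta}$ still vanishes to order $k$ on all of $\mathcal{P}$. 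Consequently both $\eta_e$ and $\eta_o$ are holomorphic 1-forms vanishing to order $k$ on $\mathcal{P}$, one conjugate even and the other conjugate odd.

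Next I would use the observation that multiplication by $i$ interchanges the two parities: if $R^*\omega = -\overline{\omega}$ then $R^*(i\omega) = -i\overline{\omega} = \overline{i\omega}$, so $i\eta_o$ is itself conjugate even. Feeding the two conjugate even forms $\eta_e$ and $i\eta_o$ into the hypothesis of the lemma yields
\[\int_{\partial \dot M^D} f\,\iota^*_{\partial \dot M^D}\eta_e = 0 \qquad \text{and} \qquad \int_{\partial \dot M^D} f\,\iota^*_{\partial \dot M^D}(i\eta_o) = 0,\]
and dividing the second identity by $i$ and then summing gives $\int_{\partial \dot M^D} f\,\iota^*_{\partial \dot M^D}\eta = 0$ for every holomorphic 1-form $\eta$ vanishing to order $k$ on $\mathcal{P}$. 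Lemma \ref{meromorphic condition 1} with pole set $\mathcal{P}$ will then finish the proof. The only serious check in all of this is verifying that pullback by the anti-holomorphic $R$ followed by complex conjugation preserves holomorphicity and order of vanishing, modulo the $R$-action on $\mathcal{P}$; this is a brief local computation, and no genuine analytic obstacle appears.
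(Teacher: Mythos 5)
Your proposal is correct and follows essentially the same route as the paper: the paper's (quite terse) justification is exactly the conjugate even/odd decomposition together with the observation that multiplication by $i$ swaps parity, reducing orthogonality against conjugate even forms to orthogonality against all holomorphic forms vanishing to order $k$ on the $R$-invariant set, after which Lemma \ref{meromorphic condition 1} applies. Your version simply spells out the local-coordinate check that $\overline{R^*\eta}$ is a holomorphic $(1,0)$-form and that the vanishing order is transported from $p$ to $R(p)$, which the paper leaves implicit.
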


\end{subsection}
\end{section}


\begin{section}{Shifted Carleman Estimates and $H^1$ Solvability}
In this section, we prove a shifted Carleman estimate on a Riemann surface using harmonic Morse weights. The estimate will have boundary conditions similar to the ones established in \cite{chung}.  We show the following estimate for $M_0'$, $M$, and $\Gamma'$ described in Section \ref{sec:double}:
\begin{proposition}
\label{shifted carleman estimate with boundary}
Let $\varphi: M\to\R$ be a $C^k(M)$ harmonic Morse function for $k$ large such that $\partial_\nu \varphi \mid_{\Gamma'_0} = 0$ for some open subset $\Gamma_0'\subset\partial M$ compactly containing $\Gamma'$.  For all $X\in W^{1,\infty}(M)$, $V\in L^\infty(M)$ there exists $h_0>0$ such that for all $u \in C^\infty_0(M_0')$ and $h \leq h_0$ we have
\[\|e^{-\varphi/h}h^2 L_{X,V} e^{\varphi/h} u\|_{H^{-1}_{scl}(M)} \geq Ch\big( \sqrt{h} \|u\| + \| d\varphi u\| \big)\] 
\end{proposition}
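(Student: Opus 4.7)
My plan is to pass to the closed doubled surface $M^D$ via the reflection principle, prove a standard Carleman estimate there, and then upgrade the $L^2$ left-hand side to the $H^{-1}_{scl}$ norm by a shift argument using the globally defined semiclassical smoother $\langle hD\rangle^{-1}$ on $M^D$. For the setup: because $\varphi$ is harmonic on $M$ with $\pl_\nu\varphi|_{\Gamma_0'}=0$, the Neumann reflection principle extends it to a harmonic function $\til\varphi$ on an open neighborhood $\mc{O}$ of $\bar M$ in $M^D$ satisfying $R^*\til\varphi=\til\varphi$; since $\Gamma'$ is compactly contained in $\Gamma_0'$ and $\varphi$ is Morse on $M$, $\til\varphi$ is $C^k$ and Morse throughout $\mc{O}$. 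I would extend $\til\varphi$ smoothly (not harmonically) to the rest of $M^D$ and extend $X,V$ preserving their $W^{1,\infty}, L^\infty$ bounds. Since $u\in C_0^\infty(M_0')$ vanishes in a neighborhood of $\pl M_0'$, its zero-extension $\til u$ across $\Gamma'$ lies in $C_0^\infty(M^D)$ with $\supp(\til u)\subset M_0'\subset\mc{O}$, so $\til\varphi$ is harmonic on $\supp(\til u)$.

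The second step is the unshifted Carleman estimate on the closed surface:
\[\|P_{\til\varphi} w\|_{L^2(M^D)}^2\geq Ch^2\bigl(h\|w\|^2+\|d\til\varphi\cdot w\|^2+\|h\nabla w\|^2\bigr)\]
for $w\in C_0^\infty(M^D)$ supported in a neighborhood where $\til\varphi$ is harmonic and Morse, where $P_{\til\varphi}:=e^{-\til\varphi/h}h^2 L_{X,V}e^{\til\varphi/h}$. The proof uses the factorization in local holomorphic coordinates
\[e^{-\til\varphi/h}(-h^2\Delta_g)e^{\til\varphi/h}=-4e^{-2\sigma}(h\pl_z+\til\varphi_z)(h\pl_{\bar z}+\til\varphi_{\bar z}),\]
valid because $\pl\bar\pl\til\varphi=0$, reducing the task to an integration by parts for each first-order factor; the cross terms involving $\pl\bar\pl\til\varphi$ vanish, producing $\|d\til\varphi\cdot w\|^2$ alongside a positive $h^2\|\bar\pl w\|^2$ term, and the $h^3\|w\|^2$ gain comes from a local analysis on balls of radius $\sqrt h$ around the Morse critical points of $\til\varphi$, where $|d\til\varphi|^2$ alone degenerates. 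The first-order $X$-terms and potential $V$ contribute perturbations absorbed for $h$ small.

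The third step is the shift to $H^{-1}_{scl}$. Because $M^D$ is closed, $\langle hD\rangle^{-1}$ is a globally defined semiclassical pseudodifferential operator of order $-1$; following \cite{chung}, I apply the basic estimate to $v=\langle hD\rangle^{-1}\til u$ and use the identity
\[\langle hD\rangle^{-1}P_{\til\varphi}\til u=P_{\til\varphi}v-[\langle hD\rangle^{-1},P_{\til\varphi}]\til u.\]
The principal part $-h^2\Delta_g$ contributes only a lower-order Poisson bracket to the commutator, and the first-order (in $hD$) part of $P_{\til\varphi}$ produces a commutator of semiclassical order $-1$ with prefactor $h$; careful symbol calculus then gives $\|[\langle hD\rangle^{-1},P_{\til\varphi}]\til u\|_{L^2}\leq Ch\|\til u\|_{H^{-1}_{scl}}$. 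On the other side, the basic estimate applied to $v$ together with the identifications $\|v\|_{L^2}=\|\til u\|_{H^{-1}_{scl}}$ and $\|h\nabla v\|^2+\|v\|^2=\|\til u\|_{L^2}^2$ recovers a lower bound for $\|\langle hD\rangle^{-1}P_{\til\varphi}\til u\|_{L^2}=\|P_{\til\varphi}\til u\|_{H^{-1}_{scl}(M^D)}$ in terms of $\sqrt h\|\til u\|+\|d\til\varphi\til u\|$. Absorbing the commutator for $h$ small yields the shifted estimate on $M^D$, and since $\supp(\til u)\subset M_0'\subset M$ the $H^{-1}_{scl}(M^D)$ and $H^{-1}_{scl}(M)$ norms of $P_{\til\varphi}\til u$ are comparable, completing the proof.

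The principal obstacle is controlling the commutator $[\langle hD\rangle^{-1},P_{\til\varphi}]$ tightly enough not to overwhelm the $h^{3/2}$ term. The reflection construction is essential here: on $M$ alone, the nonlocal operator $\langle hD\rangle^{-1}$ would generate boundary distributional contributions near $\Gamma_0'$ that are difficult to manage, whereas on the closed $M^D$ it is a clean PDO with transparent symbol calculus. The symmetry $R^*\til\varphi=\til\varphi$ together with pseudolocality of $\langle hD\rangle^{-1}$ confines commutator errors to an $h$-neighborhood of $\supp(\til u)\subset M_0'$, preserving the required boundary behavior on the line segment $\pl M\setminus\Gamma_0'$, which is the sense in which the shift ``leaves a large portion of the original boundary intact.''
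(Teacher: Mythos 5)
Your overall scheme---double the surface, prove an $L^2$ Carleman estimate on the closed double, shift it via $\langle hD\rangle^{-1}$, then transfer the result back to $M$---matches the paper's. But there is a genuine gap in the transfer-back step, and it originates in your choice of zero extension rather than odd reflection.

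You extend $u\in C^\infty_0(M_0')$ by zero across $\Gamma'$ to get $\tilde u\in C^\infty_0(M^D)$ with $\supp\tilde u\subset M_0'\subset M$, and after proving a shifted estimate on $M^D$ you claim that $\|P_{\tilde\varphi}\tilde u\|_{H^{-1}_{scl}(M^D)}$ and $\|P_{\tilde\varphi}\tilde u\|_{H^{-1}_{scl}(M)}$ are comparable because the support lies inside $M$. The direction you need is $\|f\|_{H^{-1}_{scl}(M)}\geq c\,\|f\|_{H^{-1}_{scl}(M^D)}$ for $f=P_{\tilde\varphi}\tilde u$. To prove it by cutoff one picks $\chi\in C_0^\infty(M^{\rm int})$ with $\chi\equiv 1$ on $\supp f$ and estimates, for any $v\in H^1(M^D)$, that $\|\chi v\|_{H^1_{scl}(M)}\leq (1+h\|d\chi\|_\infty)\,\|v\|_{H^1_{scl}(M^D)}$. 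This is fine for a fixed $u$, but the constant $\|d\chi\|_\infty$ blows up as $\supp u$ approaches $\Gamma'\subset\partial M$, and Proposition \ref{shifted carleman estimate with boundary} requires $h_0$ and $C$ uniform over all $u\in C^\infty_0(M_0')$. So the comparability you invoke does not hold with uniform constants, and the argument fails precisely for the $u$'s one most cares about (those concentrating near $\Gamma'$).

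The paper's resolution is the \emph{odd} reflection: $\tilde u$ is chosen so that $R^*\tilde u=-\tilde u$, and its support straddles $\Gamma'$, living in both $M$ and $M'$. Since $\tilde\varphi_\epsilon$ is even, $P_{\tilde\varphi_\epsilon}\tilde u$ is odd, and Lemma \ref{reflected norm} gives the exact identity $\|P_{\tilde\varphi_\epsilon}\tilde u\|_{H^{-1}_{scl}(M^D)}=\sqrt 2\,\|P_{\tilde\varphi_\epsilon}\tilde u\|_{H^{-1}_{scl}(M)}$, with a constant that never degrades. The mechanism in the proof of that lemma is the point your argument misses: for odd $f$, the maximizing test function in the $H^{-1}_{scl}(M^D)$ duality can be replaced by its odd part $\hat v^-$, which vanishes identically on the fixed-point set $\partial M$ of $R$ and therefore restricts to $H^1_0(M)$ --- the exact space dual to $H^{-1}_{scl}(M)$. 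Parity does for you, uniformly, what a cutoff cannot. A secondary, repairable point: you apply the unshifted Carleman estimate directly to $\langle hD\rangle^{-1}\tilde u$, but this smoothed function is no longer compactly supported in the region where $\tilde\varphi$ is harmonic Morse; the paper applies the estimate instead to $\chi\langle hD\rangle^{-1}\tilde u$ for a cutoff $\chi$ supported in $M^D_\delta$ and explicitly absorbs the commutator $[h^2\Delta_g,\chi]$, and you should too.
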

Note that since $\Delta_{e^Kg} = e^{K} \Delta_g$ it suffices to prove Proposition \ref{shifted carleman estimate with boundary} for a conformal representative of $g$ which is isometric to the flat cylinder near $\partial M$. The important feature in Proposition \ref{shifted carleman estimate with boundary} is that $\Gamma'$ is the common boundary component of $M_0'$ and $M$. This allows us to deduce the following semiclassical solvability while controlling the solution on a part of the boundary.
\begin{coro}
\label{H1 solvability}
Let $\varphi$ be as in Proposition \ref{shifted carleman estimate with boundary}. Then for all $f\in L^2(M_0')$ there exists a solution $u\in H_0^1(M)$ of the boundary value problem 
\[e^{-\varphi/h} L_{X,V} e^{\varphi/h} u = f\ \text{in}\ M_0',\ u\mid_{\Gamma'} = 0,\]
satisfying the estimate $\|u\| + \|h du\| \leq \sqrt{h} \|f\|$.
\end{coro}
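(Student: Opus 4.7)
The plan is to obtain solvability from Proposition \ref{shifted carleman estimate with boundary} via a standard Hahn--Banach duality argument. Write $P_\varphi := e^{-\varphi/h}h^2 L_{X,V} e^{\varphi/h}$. Since the connection 1-form $X$ is real, one checks directly that $L_{X,V}^* = L_{X,\bar V}$, so the formal $L^2$-adjoint is $P_\varphi^* = e^{\varphi/h} h^2 L_{X,\bar V} e^{-\varphi/h}$, i.e., a conjugated magnetic Schr\"odinger operator of the same form but with harmonic Morse weight $-\varphi$ (whose normal derivative still vanishes on $\Gamma_0'$) and with $L^\infty$ potential $\bar V$ in place of $V$. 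Applying Proposition \ref{shifted carleman estimate with boundary} in this setting, and retaining only the $\sqrt h\,\|v\|$ part of the right hand side, yields the adjoint Carleman estimate
\[
\|P_\varphi^* v\|_{H^{-1}_{scl}(M)} \,\geq\, C h^{3/2}\|v\|_{L^2(M)}, \qquad v\in C_0^\infty(M_0').
\]

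Given $f\in L^2(M_0')$, extend by zero to $M$ and define a linear functional on the subspace $\mathcal{V} := \{P_\varphi^* v : v\in C_0^\infty(M_0')\}\subset H^{-1}_{scl}(M)$ by $\ell(P_\varphi^* v) := h^2\langle v,f\rangle_{L^2(M)}$; this is well defined because the adjoint Carleman estimate forces $P_\varphi^*$ to be injective on $C_0^\infty(M_0')$. Cauchy--Schwarz combined with the Carleman estimate then gives
\[
|\ell(P_\varphi^* v)| \,\leq\, h^2\|v\|_{L^2}\|f\|_{L^2} \,\leq\, C^{-1} h^{1/2}\|P_\varphi^* v\|_{H^{-1}_{scl}(M)} \|f\|_{L^2},
\]
so by Hahn--Banach $\ell$ extends to a continuous functional on $H^{-1}_{scl}(M)$ of norm at most $C^{-1}\sqrt h\,\|f\|_{L^2}$. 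Using the duality between $H^{-1}_{scl}(M)$ and $H^1_{0,scl}(M)$, this produces $u\in H_0^1(M)$ representing $\ell$ with $\|u\|_{L^2} + \|h\,du\|_{L^2} \leq \sqrt h\,\|f\|_{L^2}$ (up to absorbing constants into the choice of $h_0$).

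Unravelling the definition, $\langle u, P_\varphi^* v\rangle = h^2\langle v,f\rangle$ for every $v\in C_0^\infty(M_0')\subset C_0^\infty(M)$; since the left hand side equals the distributional pairing $\langle P_\varphi u, v\rangle$, this is exactly the weak form of the equation $e^{-\varphi/h} L_{X,V} e^{\varphi/h} u = f$ in $M_0'$. The boundary condition $u|_{\Gamma'} = 0$ is automatic from $u\in H_0^1(M)$ together with the inclusion $\Gamma' = \pl M_0'\cap\pl M\subset \pl M$; no boundary restriction is imposed on $u$ along the interior arc $\pl M_0' \setminus \Gamma'$, consistent with the fact that the PDE is only asserted on the open set $M_0'$. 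The main delicacy is ensuring that testing against $v \in C_0^\infty(M_0')$ (rather than the larger $C_0^\infty(M)$) is compatible with the duality pairing, but this is exactly what Proposition \ref{shifted carleman estimate with boundary} is tailored for: its hypothesis that the test functions vanish on all of $\pl M_0'$ is what makes the integration-by-parts identity $\langle u, P_\varphi^* v\rangle = \langle P_\varphi u, v\rangle$ hold without extraneous boundary terms from the interior arc.
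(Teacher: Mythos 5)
Your proof is correct and is precisely the standard Hahn--Banach / duality deduction of semiclassical solvability from a Carleman estimate; the paper states Corollary \ref{H1 solvability} as an immediate consequence of Proposition \ref{shifted carleman estimate with boundary} without writing out the argument, so yours fills in exactly what is expected. The one design feature worth emphasizing (which your argument uses implicitly but your final paragraph phrases a bit obliquely) is that Proposition \ref{shifted carleman estimate with boundary} measures $P_\varphi^* v$ in $H^{-1}_{scl}(M)$ while testing over $v\in C^\infty_0(M_0')$: it is the choice of $H^{-1}_{scl}(M)$ (dual to $H^1_{0,scl}(M)$) rather than $H^{-1}_{scl}(M_0')$ that makes the representing element land in $H^1_0(M)$ --- so $u$ vanishes on $\partial M\supset\Gamma'$ but is left free along the interior arc $\partial M_0'\setminus\Gamma'$, which is exactly the boundary control the corollary asserts; had the estimate been posed in $H^{-1}_{scl}(M_0')$ the same duality would have forced $u\in H^1_0(M_0')$ and imposed an unwanted vanishing condition on the interior arc. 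The minor point about $\ell(P_\varphi^*v)=h^2\langle v,f\rangle$ versus $h^2\langle f,v\rangle$ is just a conjugation convention and does not affect the argument.
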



We start the proof by modifying the weight as follows:
Let $\Gamma_0'\subset \partial M$ be an open subset compactly containing $\Gamma'$ so that $\partial M \backslash \Gamma_0'$ contains on open subset. If $\varphi_0:=\varphi: M \to \R$ is a real valued harmonic Morse function with critical points $\{p_1,\dots,p_N\}$ in $M \cup \partial M$ and $\partial_\nu\varphi_0 \mid_{\Gamma_0'} =0$, we 
let $\varphi_j:M \to \R$ be harmonic functions with boundary condition $\partial_\nu\varphi_j \mid_{\Gamma_0'} =0$ such that $p_j$ is not a critical point of $\varphi_j$ for $j = 1,\dots,N$. Their existence is ensured by  Lemma \ref{control the zero}.
For all $\epsilon >0$, we define the {\em convexified} weight $\varphi_{\eps} := \varphi - \frac{h}{2\epsilon}(\sum_{j = 0}^N|\varphi_j|^2)$. By Lemma \ref{nonvanishing normal derivative} we can choose $\varphi_j$ such that $\partial_\nu\varphi_j = 0$ on $\Gamma_0'$. 

As the normal derivatives of $\varphi_j$ along $\Gamma_0'$ all vanish, the even extensions of $\varphi_j$ to the double $M^D$ (which we denote again by $\varphi_j$) are harmonic on some connected bordered surface $M^D_\delta\subset M^D$ which compactly contains $\dot M^D$. We note that if $\varphi_0$ is Morse on $M\cup \partial M$, then its extension is Morse on $M^D_\delta$.
\begin{subsection}{Shifted Estimate on $M^D$} In this section let $M$, $M^D$, and the metric $g$ be as described in the construction given in Section \ref{sec:double}. We prove in the setting the following estimate:
\begin{proposition}
\label{shifted carleman}
There exists an $h_0>0$ such that for all $h\in(0,h_0)$ and $u\in C_0^\infty(\dot M^D)$ we have
\begin{equation}
\begin{gathered}
\|e^{-\varphi_{\epsilon}/h}h^2 \Delta_g e^{\varphi_\eps/h} u\|_{H^{-1}_{scl}(M^D)} \geq \frac{Ch}{\eps}\big( \sqrt{h} \|u\| + \| d\varphi u\| + \| d\varphi_\eps u\|+ \|h du\|_{H^{-1}_{scl}(M^D))}\big) 
\end{gathered}
\end{equation}
\end{proposition}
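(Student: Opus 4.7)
Because $u\in C_0^\infty(\dot M^D)\subset C_0^\infty(M^D)$ and $M^D$ is a \emph{closed} surface, integration by parts produces no boundary terms and globally defined semiclassical pseudodifferential operators such as $\Lambda:=\langle hD\rangle^{-1}$ are at my disposal. My strategy is a two-step argument: first I will prove an unshifted $L^2$ Carleman estimate via the positive-commutator method with convexification, then shift to the $H^{-1}_{scl}$ scale using $\Lambda$ in the style of Dos Santos Ferreira--Kenig--Sj\"ostrand--Uhlmann and Chung.

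\textbf{Step 1 (unshifted $L^2$ Carleman).} Set $P_\eps := e^{-\varphi_\eps/h}h^2\Delta_g e^{\varphi_\eps/h}$ and decompose $P_\eps = A + iB$ into its formally self-adjoint and anti-self-adjoint parts. Since $\varphi$ and each $\varphi_j$ are harmonic on $M^D_\delta\supset\dot M^D$, one has $\Delta\varphi_\eps = -\tfrac{h}{\eps}\sum_j|d\varphi_j|^2$, giving $A = h^2\Delta_g - |d\varphi_\eps|^2$ (plus real zero-order terms) while $iB$ is the first-order anti-self-adjoint piece. The basic identity
\[ \|P_\eps u\|_{L^2(M^D)}^2 = \|Au\|^2 + \|Bu\|^2 + (i[A,B]u,u)_{L^2}, \]
together with a direct symbolic computation yielding
\[ i[A,B] \;\gtrsim\; \tfrac{h^3}{\eps}\bigl(|d\varphi_\eps|^2 + |d\varphi|^2 + \sum_{j=1}^N |d\varphi_j|^2\bigr) \]
modulo second-order terms absorbable into $\|Au\|^2$, produces the unshifted estimate. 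Crucially, by the construction in the paragraph preceding the proposition, each $p_j$ is noncritical for $\varphi_j$, so $|d\varphi|^2 + \sum_j|d\varphi_j|^2 \ge c>0$ uniformly on $\overline{\dot M^D}$, which is exactly what is needed to extract the $\sqrt{h}\|u\|$ gain. The ellipticity of $A$ contributes, after absorption, the $\|hdu\|_{L^2}$ term.

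\textbf{Step 2 (shift to $H^{-1}_{scl}$).} Write $\|P_\eps u\|_{H^{-1}_{scl}} = \|\Lambda P_\eps u\|_{L^2}$ and use $\Lambda P_\eps = P_\eps\Lambda + [\Lambda, P_\eps]$, where $[\Lambda, P_\eps]$ is a zero-order semiclassical $\Psi$DO of size $O(h)$ on $L^2$. Applying Step 1 to $v:=\Lambda u\in C^\infty(M^D)$ gives a lower bound involving $\|v\|$, $\|d\varphi v\|$, $\|d\varphi_\eps v\|$ and $\|hdv\|_{L^2}$; propagating these back to $u$ via the commutator estimates $[\Lambda, d\varphi]=O(h)$ and $[\Lambda, hd]=O(h)$ produces the desired norms on the right-hand side, with $O(h)\|u\|_{L^2}$ commutator losses that are absorbable into the leading $\tfrac{h\sqrt h}{\eps}\|u\|_{L^2}$ term whenever $h\ll \eps$.

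\textbf{Main obstacle.} The delicate point is Step 2. Because $\Lambda$ is nonlocal, $v = \Lambda u$ is not compactly supported in $\dot M^D$, which is exactly the reason for passing to the closed double $M^D$ and for extending the weight $\varphi_\eps$ evenly (so that the convexifiers $\varphi_j$ remain smooth and harmonic across $\pl M$); without the even extension, $\Lambda$ would generate uncontrollable contributions near the original boundary. The other bookkeeping nuisance is verifying that the commutator losses are truly $O(h)$ rather than $O(1)$, which relies on the $W^{1,\infty}$ regularity of $X$ and $V$ (to treat the lower-order pieces of $L_{X,V}$ as perturbations in the companion estimate of Proposition~\ref{shifted carleman estimate with boundary}) and on taking $k$ large enough to manage the symbolic expansions of the $\Psi$DO's. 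Once this is in place, the combination of the convexity-generated positivity and the elliptic regularity from $\|Au\|^2$ yields the claimed inequality.
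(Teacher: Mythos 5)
Your overall strategy (an unshifted $L^2$ Carleman estimate followed by a shift to $H^{-1}_{scl}$ via $\langle hD\rangle^{-1}$) is the same as the paper's; the paper simply cites the $L^2$ estimate from Lemma 3.2 of \cite{surface full data} instead of rederiving it by the positive-commutator method. The substantive gap is in your Step 2, where you apply the $L^2$ estimate directly to $v:=\langle hD\rangle^{-1}u$. That is not legitimate here, because the $L^2$ Carleman estimate does \emph{not} hold for arbitrary $v\in C^\infty(M^D)$ — it holds only for $v$ compactly supported in the region $M^D_\delta$ where the extended weight and its convexifiers are actually smooth and harmonic. Recall that the even extension of $\varphi_j$ across $\partial M$ is smooth and harmonic only across $\Gamma_0'$ (where $\partial_\nu\varphi_j=0$ by construction); across $\partial M\setminus\Gamma_0'$ the even extension has a jump in normal derivative, so it fails to be $C^1$, let alone harmonic. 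This is precisely why the paper works on the bordered piece $M^D_\delta\subsetneq M^D$, which compactly contains $\dot M^D$ but excludes a neighborhood of that bad segment. Since $\langle hD\rangle^{-1}u$ is a nonlocal smoothing of $u$, its support fills all of $M^D$, so you cannot feed it into the $L^2$ estimate as is; the ``positivity'' you invoke from $|d\varphi|^2+\sum_j|d\varphi_j|^2\geq c>0$ is also only established on $\overline{\dot M^D}$, not globally on the double.

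The paper resolves this by introducing a cutoff $\chi\in C_0^\infty(M^D_\delta)$ with $\chi=1$ on $\dot M^D$ and applying the $L^2$ estimate to $\chi\langle hD\rangle^{-1}u$ instead. This generates an extra commutator term $e^{-\varphi_\eps/h}[h^2\Delta_g,\chi]e^{\varphi_\eps/h}\langle hD\rangle^{-1}u$, which your proposal never addresses. Controlling it is not automatic: $[h^2\Delta_g,\chi]$ is a first-order semiclassical operator supported in $\mathrm{supp}(d\chi)\subset M^D_\delta\setminus\dot M^D$, disjoint from $\mathrm{supp}(u)$, but it acts on the nonlocal $\langle hD\rangle^{-1}u$, not on $u$. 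One has to verify (as the paper does) that this term is bounded by $h^2\|u\|+h\|d\varphi_\eps u\|+h\|hdu\|_{H^{-1}_{scl}(M^D)}$, which can be absorbed into the right-hand side because of the extra $1/\eps$ factor there. Your ``main obstacle'' paragraph shows you sensed the nonlocality issue, but the proposed remedy (``passing to the closed double and extending evenly'') is insufficient on its own: the even extension only helps across $\Gamma_0'$, and the remaining portion of $\partial M$ forces the cutoff and its attendant commutator analysis.
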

\noindent{\bf Proof.} By Lemma 3.2 of \cite{surface full data} one has the $L^2$ Carleman estimate
\[ \| e^{-\varphi_\eps/h}h^2\Delta_g e^{\varphi_\eps/h} u \| \geq \frac{Ch}{\eps} \Big( \sqrt{h} \|u\| + \| d\varphi u\| + \| d\varphi_\eps u\|+ \|h du\|\Big)\]
for all $u \in C_0^\infty(M^D_\delta)$. Now let $\chi\in C^\infty_0(M^D_\delta)$ be a cutoff so that $\chi =1$ on $\dot M^D$ and apply the above inequality to $\chi \langle hD\rangle^{-1} u$ for $u\in C^\infty_0(\dot M^D)$ where $ \langle hD\rangle^{-1}$ is the elliptic semiclassical pseudodifferential operator obtained by quantizing the symbol $\langle \xi \rangle ^{-1} := (1 + |\xi|^2_g)^{-1/2} \in S^{-1}(T^*M^D)$. Standard commutator calculus yields that
\begin{eqnarray}\label{L2 inequality}\nonumber
\| e^{-\varphi_\eps/h}h^2\Delta_g e^{\varphi_\eps/h} u \|_{H^{-1}_{scl}(M)} + \|e^{-\varphi_\eps/h}[h^2 \Delta_g, \chi] e^{\varphi_\eps/h}\langle hD\rangle^{-1}u\|+ \| \chi [e^{-\varphi_\eps/h}h^2\Delta_ge^{\varphi_\eps/h}, \langle hD\rangle^{-1}] u\|\\ \geq \frac{Ch}{\eps}\big(\sqrt{h} \|u\| + \| u d\phi_{\eps}\| + \| u d\phi\| + \| hdu\|_{H^{-1}_{scl}(M^D)}\big)
\end{eqnarray}
We compute the second term directly to obtain 
\[\|e^{-\varphi_\eps/h}[h^2 \Delta_g, \chi] e^{\varphi_\eps/h}\langle hD\rangle^{-1}u\| \leq h^2 \|u\| + h \|ud\varphi_{\eps}\| + h\|hdu\|_{H^{-1}_{scl}(M^D)}\]
and see that it can therefore be absorbed into the right side of inequality \eqref{L2 inequality}. Similarly if we write $e^{-\varphi_\eps/h}h^2\Delta_ge^{\varphi_\eps/h} = A + iB$ where \[Au = h^2 \Delta_gu - |d\varphi_\eps|^2u,\ \ iBu = {\rm div}_g (ud\varphi_\eps) + \langle d\varphi_\eps, du\rangle,\] we see that the third term on the left side of \eqref{L2 inequality} can be written as
\[[e^{-\varphi_\eps/h}h^2\Delta_ge^{\varphi_\eps/h}, \langle hD\rangle^{-1}] =h Op_h(\{a + ib, \langle \xi\rangle^{-1}\}) + h^2 Op_h(S^{-1}(T^*M^D))\]
which leads to the estimate
\[ \| \chi[A+iB,\langle hD\rangle^{-1}] u\| \leq h \|hdu\|_{H^{-1}_{scl}(M^D)} + h^2 \| u\|\]
and therefore can again be absorbed into the right side of inequality \eqref{L2 inequality}.


\qed

\end{subsection}

\begin{subsection}{Reflection Argument}
In this section we apply a reflection argument to prove Proposition \ref{shifted carleman estimate with boundary}. We first prove the estimate for the special case when $X=V=0$.
\begin{lemma}
\label{shifted CE with convexified weight}
For all $u\in C^\infty_0(M_0')$ we have that 
\[ \|e^{-\varphi_\epsilon/h} h^2\Delta_g e^{\varphi_\epsilon/h}u\|_{H^{-1}_{scl}(M)} \geq C \frac{h}{\epsilon} ( \sqrt{h} \|u\| + \|d\varphi  u\| + \|d\varphi_\epsilon  u\|+ \|hd \tilde u\|_{H^{-1}_{scl}(M^D)}).\] 
\end{lemma}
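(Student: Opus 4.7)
My plan is to reduce the estimate to Proposition \ref{shifted carleman} by doubling across $\Gamma'$. The key point is that for $u\in C^\infty_0(M_0')$, the zero extension to $\dot M^D$ is automatically smooth and compactly supported, because $u$ vanishes in a neighborhood of the boundary component $\Gamma'\subset \partial M_0' \cap \partial M$. On the other side, the convexified weight $\varphi_\eps$ is available on the enlarged surface $M^D_\delta \supset \dot M^D$: each $\varphi_j$ extends harmonically across $\Gamma_0' \supset \Gamma'$ by even reflection, which is precisely what the boundary condition $\partial_\nu\varphi_j\mid_{\Gamma_0'}=0$ was arranged to give.

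First, I define $\tilde u$ on $\dot M^D$ by setting $\tilde u := u$ on $M_0'$ and $\tilde u := 0$ on $\dot M^D \cap M'$. Since $\mathrm{supp}(u)$ is compact in the open set $M_0' = \dot M^D \cap M$, it is bounded away from both the arc $C$ of the removed half-disk and from $\Gamma'$; hence $\tilde u$ is smooth across $\Gamma'$ and lies in $C^\infty_0(\dot M^D)$. Proposition \ref{shifted carleman} then applies and yields
\[
\|e^{-\varphi_\eps/h}h^2\Delta_g e^{\varphi_\eps/h}\tilde u\|_{H^{-1}_{scl}(M^D)} \geq \frac{Ch}{\eps}\bigl(\sqrt{h}\|\tilde u\| + \|d\varphi\,\tilde u\| + \|d\varphi_\eps\tilde u\| + \|hd\tilde u\|_{H^{-1}_{scl}(M^D)}\bigr).
\]
Since $\tilde u\equiv 0$ on $M'$, each of the first three $L^2$-norms on the right coincides with the corresponding norm of $u$ on $M_0'$, and the last term is exactly the one appearing in the lemma's statement.

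Next, I translate the left-hand side. Because the conjugated Laplacian is a local second-order differential operator and $\tilde u$ is a smooth function whose support lies in $M$, the distribution $e^{-\varphi_\eps/h}h^2\Delta_g e^{\varphi_\eps/h}\tilde u$ on $M^D$ is simply the extension by zero of $e^{-\varphi_\eps/h}h^2\Delta_g e^{\varphi_\eps/h} u$ from $M$. With $H^{-1}_{scl}$ interpreted as the dual of $H^1_{scl}$, the restriction map $H^1_{scl}(M^D)\ni \phi\mapsto \phi|_M\in H^1_{scl}(M)$ is norm-nonincreasing, so duality gives
\[
\|e^{-\varphi_\eps/h}h^2\Delta_g e^{\varphi_\eps/h}\tilde u\|_{H^{-1}_{scl}(M^D)} \leq C\,\|e^{-\varphi_\eps/h}h^2\Delta_g e^{\varphi_\eps/h} u\|_{H^{-1}_{scl}(M)}.
\]
Chaining the two inequalities proves the lemma.

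The only real technical issue is the last norm comparison between $M$ and $M^D$ for a zero-extended distribution; this is a standard duality argument once the convention for the semiclassical $H^{-1}$-norm is fixed as above. All the reflection-principle content is already packaged into Proposition \ref{shifted carleman} and the extendability of $\varphi_\eps$, so no further PDE work is needed at this step.
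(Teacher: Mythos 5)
Your plan parallels the paper's at the global level—extend $u$ to $\dot M^D$, apply Proposition~\ref{shifted carleman}, then compare $H^{-1}_{scl}(M^D)$ with $H^{-1}_{scl}(M)$—but you extend by zero whereas the paper takes the \emph{odd reflection} $\tilde u$ across $\Gamma'$, and this difference is not cosmetic: your final norm comparison does not close.

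The gap is in the last step, and it is precisely the one you flag as ``the only real technical issue.'' In this paper $H^{-1}_{scl}(M)$ is the dual of $H^1_0(M)$ (this is explicit in the proof of Lemma~\ref{reflected norm}, where the supremum defining $\|\cdot\|_{H^{-1}_{scl}(M)}$ runs over $v\in H^1_0(M)$). If $T$ is a distribution on $M$ and $T_{\rm ext}$ its extension by zero to the closed surface $M^D$, then by duality
\[
\|T_{\rm ext}\|_{H^{-1}_{scl}(M^D)} = \sup_{\|v\|_{H^1_{scl}(M^D)}\le 1}\langle T, v|_M\rangle,
\]
but $v|_M$ ranges over $H^1(M)$, \emph{not} $H^1_0(M)$: a function on the boundaryless surface $M^D$ has no reason to vanish on $\partial M$. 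So the restriction argument bounds $\|T_{\rm ext}\|_{H^{-1}_{scl}(M^D)}$ by $\|T\|_{(H^1(M))^*}$, not by $\|T\|_{(H^1_0(M))^*}=\|T\|_{H^{-1}_{scl}(M)}$, and these two norms are not uniformly comparable. One can try to repair this by cutting off near $\Gamma'$ (since $T=P_{\varphi_\eps}u$ is supported in $\mathrm{int}(M)$), but the admissible range of $h$ in the resulting estimate then depends on $\mathrm{dist}(\operatorname{supp} u,\Gamma')$ through $\|d\chi\|_\infty$, and this is not uniform over $u\in C^\infty_0(M_0')$. The Carleman estimate must hold with an $h_0$ and $C$ independent of $u$, so this does not yield the lemma.

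The odd reflection is exactly the device that removes this dependence. If $\tilde u$ is the odd extension, then $e^{-\varphi_\eps/h}h^2\Delta_g e^{\varphi_\eps/h}\tilde u$ is again odd (because $\varphi_\eps$ is even), and Lemma~\ref{reflected norm} gives the \emph{exact} identity $\|w\|_{H^{-1}_{scl}(M^D)}=\sqrt 2\,\|w\|_{H^{-1}_{scl}(M)}$ for odd $w$. The mechanism is that, for odd $w$, the maximizing test function $\hat v$ on $M^D$ is itself odd, hence vanishes on the fixed-point set $\partial M$ of the involution $R$, so $\hat v|_M\in H^1_0(M)$ — this is what lets the paper pass from the $(H^1(M^D))^*$-pairing to the $(H^1_0(M))^*$-pairing with a constant that is uniform in $u$ and $h$. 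Without the symmetry there is no reason for the test function to vanish on $\Gamma'$, and the argument breaks.

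A small additional mismatch: the lemma's right-hand side contains $\|hd\tilde u\|_{H^{-1}_{scl}(M^D)}$ with $\tilde u$ the odd reflection; the same symbol in your proof denotes the zero extension, which is a different function on $M'$, so even the conclusion you prove is not the one stated. To match the lemma (and the subsequent step where this term is bounded below by $\|hdu\|_{H^{-1}_{scl}(M)}$), you need the odd extension.
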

\noindent{\bf Proof.}

If $u$ is an element of $C^\infty_0(M_0')$, let $\tilde u$ denote its odd reflection which is an element of $C^\infty_0(\dot M^D)$ which extends trivially to a smooth odd function on $M^D$. We can now apply Lemma \ref{shifted carleman} to the compactly supported function $\tilde u \in C_0^\infty(\dot M^D)$ to obtain
\[ \|e^{-\varphi_\epsilon/h} h^2\Delta_g e^{\varphi_\epsilon/h} \tilde u\|_{H^{-1}_{scl}(M^D)} \geq C \frac{h}{\epsilon} ( \sqrt{h} \|\tilde u\| + \|d\varphi \tilde u\| + \|d\varphi_\epsilon \tilde u\|+ \|hd\tilde u\|_{H^{-1}_{scl}(M^D)})\] 

We now would like to use the symmetry of $\tilde u$ with respect to the pull-back by $R$ to argue that this estimate is comparable to the analogous one on $M$. This can be done with the help of the following
\begin{lemma}
\label{reflected norm}
Let $\tilde u\in C^\infty(M^D)$ be an odd function with respect to the involution $R$, that is, $R^*\tilde u = -\tilde u$, then 
\[\|\tilde u\|_{H^{-1}_{scl}(M^D)} = \sqrt{2} \|\tilde u\|_{H^{-1}_{scl}(M )}.\]

\end{lemma}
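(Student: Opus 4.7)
The plan is to use duality to recast both $H^{-1}_{scl}$-norms as suprema of $H^1_{scl}$-pairings, exploit the $R$-symmetry of the metric on $M^D$ to restrict to $R$-odd test functions, and then invoke the fact that odd reflection across $\partial M$ defines a bijection between $R$-odd functions on $M^D$ and $H^1_0(M)$.

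First I would fix the convention that $\|\cdot\|_{H^{-1}_{scl}(M^D)}$ and $\|\cdot\|_{H^{-1}_{scl}(M)}$ are the duals of $H^1_{scl}(M^D)$ and of $H^1_{0,scl}(M)$ respectively, so that
\begin{align*}
\|\tilde u\|_{H^{-1}_{scl}(M^D)} &= \sup_{0\neq v\in C^\infty(M^D)} \frac{|\langle \tilde u, v\rangle_{M^D}|}{\|v\|_{H^1_{scl}(M^D)}}, \\
\|\tilde u\|_{H^{-1}_{scl}(M)} &= \sup_{0\neq w\in C^\infty_c(M)} \frac{|\langle \tilde u|_M, w\rangle_{M}|}{\|w\|_{H^1_{scl}(M)}}.
\end{align*}
Since $R^*g=g$, every $v\in C^\infty(M^D)$ decomposes uniquely as $v=v_e+v_o$ with $R^*v_{e,o}=\pm v_{e,o}$, and this splitting is orthogonal in both $L^2(M^D)$ and $H^1_{scl}(M^D)$. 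Because $\tilde u$ is $R$-odd, $\langle \tilde u,v_e\rangle_{M^D}=0$, so the supremum on the first line can be restricted to $R$-odd test functions.

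For any $R$-odd $v\in C^\infty(M^D)$, the restriction satisfies $v|_{\partial M}=0$, and the densities $|v|^2$ and $|dv|^2_g$ are $R$-invariant on $M^D$ (as $R$ is an isometry and $d$ commutes with $R^*$). These two facts give
\[
\|v\|^2_{H^1_{scl}(M^D)}=2\,\|v|_M\|^2_{H^1_{scl}(M)}, \qquad \langle \tilde u,v\rangle_{M^D}=2\,\langle \tilde u|_M,v|_M\rangle_M.
\]
Conversely, odd reflection sends any $w\in C^\infty_c(M)$ to a smooth compactly supported $R$-odd function on $\dot M^D$, so the restriction map $v\mapsto v|_M$ on $R$-odd test functions has dense range among $H^1_0(M)$-test functions. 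Substituting the two identities into the supremum yields
\[
\|\tilde u\|_{H^{-1}_{scl}(M^D)}=\sup_{v\text{ odd}}\frac{2|\langle \tilde u|_M,v|_M\rangle_M|}{\sqrt{2}\,\|v|_M\|_{H^1_{scl}(M)}}=\sqrt{2}\,\|\tilde u\|_{H^{-1}_{scl}(M)},
\]
which is the desired identity.

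The only step requiring mild care is the density claim needed to convert the supremum over $R$-odd smooth extensions into a supremum over $w\in C^\infty_c(M)$; this is immediate because the odd extension of a function compactly supported in the interior of $M$ is automatically smooth across $\partial M$, and $C^\infty_c(M)$ is dense in $H^1_0(M)$. Apart from this formality, the entire argument is a direct manipulation of integrals based on the isometric involution $R$.
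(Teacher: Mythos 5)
Your proof is correct and rests on the same key idea as the paper's: decompose $H^1_{scl}(M^D)$ test functions into $R$-even and $R$-odd parts (orthogonally, since $R$ is an isometry), observe that the odd $\tilde u$ pairs trivially with even test functions, and use the identification of $R$-odd functions on $M^D$ with $H^1_0(M)$ via the factor-of-two scaling. The only cosmetic difference is that you restrict the supremum to $R$-odd test functions directly and invoke density of $C^\infty_c(M)$ in $H^1_0(M)$, whereas the paper works with the unique maximizer $\hat v$ and argues its even component must vanish; both routes amount to the same computation.
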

Indeed, since $\tilde u$ is odd and $\varphi_\epsilon$ is even we have that $e^{-\varphi_\epsilon/h} h^2\Delta_g e^{\varphi_\epsilon/h} \tilde u$ is also a smooth odd function on $M^D$. Thus we can apply Lemma \ref {reflected norm} to $e^{-\varphi_\epsilon/h} h^2\Delta_g e^{\varphi_\epsilon/h} \tilde u$ to obtain
\begin{eqnarray*}
\|e^{-\varphi_\epsilon/h} h^2\Delta_g e^{\varphi_\epsilon/h}u\|_{H^{-1}_{scl}(M)} &\geq& C \frac{h}{\epsilon} ( \sqrt{h} \|u\| + \|d\varphi  u\| + \|d\varphi_\epsilon  u\|+ \|hd \tilde u\|_{H^{-1}_{scl}(M^D)})\\
 &\geq& C \frac{h}{\epsilon} ( \sqrt{h} \|u\| + \|d\varphi  u\| + \|d\varphi_\epsilon  u\|+ \|hd  u\|_{H^{-1}_{scl}(M)})
 \end{eqnarray*}
\qed\\
We complete this subsection we must provide\\
\noindent{\bf Proof of Lemma \ref{reflected norm}.} We compute directly the $H^{-1}_{scl}(M^D)$ norm of $u\in C^\infty(M^D)$.
\[ \|u\|_{H^{-1}_{scl}(M^D)} := \sup\limits_{ v \in H^1(M^D),\ \|v\|_{H^{1}_{scl}(M^D)}\leq 1}\langle u, v\rangle = \int_{M^D} u \hat v  \]
where $\hat v$ is the unique maximizer in $\hat v \in H^1(M^D)$ with $\|\hat v\|_{H^{1}_{scl}(M^D)}= 1$. We decompose $\hat v$ into its odd and even parts by writing 
\[\hat v (x,y) = \frac{\hat v + R^*\hat v}{2} + \frac{\hat v - R^*\hat v}{2} := \hat v^+ +\hat v^-.\]  
Observe that since $u$ is odd by assumption we have 
\[\int_{M^D} u \hat v ^+  = \int_{M^D} R^* u R^* \hat v^+   = -\int_{M^D} u \hat v ^+  ,\ \int_{M^D} u \hat v ^-  = 2\int_{M} u  \hat v^-   \]
and thus we can write 
\begin{eqnarray}
\label{twice the integral}
 \|u\|_{H^{-1}_{scl}(M^D)} = \int_{M^D} u \hat v = \int_{M^D} u \hat v^-  = 2\int_M u\hat v^-  .\end{eqnarray}

Note that since $\int_{M^D} \hat v^+ \overline{\hat v^-}  = 0$ and $\int_{M^D} \langle d \hat v^+, d\hat v^-\rangle   =  \int_{M^D}\Delta\hat v^+  \overline{\hat v^-}  = 0$, we can write the $H^1_{scl}(M^D)$ norm of $\hat v$ as
\[1=\|\hat v\|_{H^{1}_{scl}(M^D)}^2 = \|\hat v^+\|_{H^{1}_{scl}(M^D)}^2 + \|\hat v^-\|_{H^{1}_{scl}(M^D)}^2.\]
From this we can conclude that $\hat v ^-$ is in the unit ball of $H^1_{scl}(M^D)$ and by the uniqueness of maximizer we have that $\hat v^- = \hat v$. Furthermore, since $\hat v^-$ is odd, it vanishes along the fixed points of the involution $R$. As the involution $R$ fixes the boundary $\partial M$, this means that $v^-\mid_{\partial M} = 0$ and therefore $\hat v^-\mid_{M}\in H^1_0(M)$ with semiclassical norm $\|\hat v^-\|_{H^1_{scl}(M)} = \frac{1}{\sqrt{2}}$. So by \eqref{twice the integral} we have that
\[ \|u\|_{H^{-1}_{scl}(M^D)}  = 2 \int_{M} u \hat v^-  \leq 2\sup\limits_{ v\in H^1_0(M), \|v\|_{H^1_{scl}(M)}\leq \frac{1}{\sqrt 2}} \int_{M} uv  = \sqrt{2}\|u\|_{H^{-1}_{scl}(M)}\]

This inequality goes the other direction by observing that for odd functions $u \in C^\infty(M^D)$ we have
\[ \|u\|_{H^{-1}_{scl}(M^D)} \geq \sup\limits_{ v\in H^1_0(M), \|v\|_{H^1_{scl}(M)}\leq \frac{1}{\sqrt 2}} \int_{M} uv +\sup\limits_{ v\in H^1_0(M), \|v\|_{H^1_{scl}(M)}\leq \frac{1}{\sqrt 2}} \int_{R(M)} uR^*v = \sqrt{2} \|u\|_{H^{-1}_{scl}(M)}.\]
\qed
\end{subsection}
\begin{subsection}{Proof of Proposition \ref{shifted carleman estimate with boundary}.}
By Lemma \ref{shifted CE with convexified weight} we have for $u\in C^\infty_0(M_0')$ the estimate for the Laplacian with convexified weights:
\[ \|e^{-\varphi_\epsilon/h} h^2\Delta_g e^{\varphi_\epsilon/h}u\|_{H^{-1}_{scl}(M)} \geq C \frac{h}{\epsilon} ( \sqrt{h} \|u\| + \|d\varphi  u\| + \|d\varphi_\epsilon  u\|+ \|hd  u\|_{H^{-1}_{scl}(M)}).\] 
If we replace $\Delta_g$ by the operator $L_{X,V} := (d+iX)^*(d+iX) + V$ we will obtain errors on the left side:
{\small\begin{eqnarray*}  &&\|e^{-\varphi_\epsilon/h}h^2 L_{X,V} e^{\varphi_\epsilon/h} u\|_{H^{-1}_{scl}(M)}+h^2 \|Qu\|+ h\| \langle d\varphi_\eps, X\rangle u\|+ h\| \langle X, hdu\rangle\|_{H^{-1}_{scl}}\geq\\ && \|e^{-\varphi_\epsilon/h} h^2\Delta_g e^{\varphi_\epsilon/h}u\|_{H^{-1}_{scl}(M)}  \geq C\frac{h}{\epsilon}( \sqrt{h} \|u\| + \| d\varphi u\| + \|d\varphi_{\epsilon}u \| + \|h du\|_{H^{-1}_{scl}(M))}\end{eqnarray*}
}
for some $Q\in L^\infty$. Since $X\in W^{1,\infty}(M)$ and $Q\in L^\infty(M)$ all the errors on the left side can be absorbed into the right side of the inequality. We now replace $u$ in the above estimate by $e^{\frac{1}{2\epsilon} \sum \limits_{j=1}^N \varphi_j^2} u$ so that $e^{\varphi_\epsilon/h} e^{\frac{1}{2\epsilon} \sum \limits_{j=1}^N \varphi_j^2} u = e^{\varphi/h} u$ and the estimate follows.

\qed

\end{subsection}

\end{section}


\begin{section}{Boundary Determination}
We begin the section by stating the local boundary determination result. The statement was proven in the Euclidean case by \cite{BrSa} and \cite{salothesis}. A slight generalization to the case of Riemann surfaces was done in \cite{gafa}. The results are statement for the global Dirichlet-Neumann map but as the methods are local they can be generalized without modification to show
\begin{proposition}
\label{boundary value determination}
Let $X_1, X_2\in W^{3,p}(M; T^*M_0)$ be real-valued 1-forms and $V_1, V_2 \in W^{2,p}(M_0)$ be functions. If assumptions \eqref{main assumption} and \eqref{boundary normal assumption} are satisfied then $\iota^*_{\partial M_0}  X_1\mid_{\partial M_0\backslash \Gamma} = \iota^*_{\partial M_0}X_2 \mid_{\partial M_0\backslash \Gamma}$. 
\end{proposition}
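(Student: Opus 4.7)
The plan is to reduce to the local boundary determination arguments of \cite{BrSa}, \cite{salothesis}, and the Riemannian version in \cite{gafa}. These arguments are entirely local in the boundary point, so the restriction to the partial Cauchy data set $\mc{C}_{X_j, V_j, \partial M_0\backslash \Gamma}$ only affects which boundary points we can test at, namely points in $\partial M_0\backslash \Gamma$. Concretely, for each $p\in \partial M_0\backslash \Gamma$ we choose a neighbourhood $U\ni p$ in $M_0$ such that $U\cap \partial M_0$ is relatively compact in $\partial M_0\backslash \Gamma$, and work in boundary normal coordinates $(x', x_n)$ with $p = 0$ and $x_n\geq 0$ inside $M_0$. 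Under \eqref{boundary normal assumption} the normal components already agree, so the task is to show $(X_1 - X_2)(\pl_\tau)(p) = 0$ for every tangent vector $\pl_\tau$ along $\partial M_0$ at $p$.

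The key tool will be a family of high-frequency concentrated boundary data. For a tangential covector $\xi$ and a semiclassical parameter $h>0$, I would take $f_{h,\xi} \in C_c^\infty(U\cap\partial M_0)$ of the form $f_{h,\xi}(x') = h^{-s}\chi(x'/h^{1/2})e^{i x'\cdot \xi / h}$, with $\chi\in C_c^\infty$ and $s$ chosen so that $\|f_{h,\xi}\|_{L^2(\partial M_0)}$ stays bounded as $h\to 0$. Then I would build WKB solutions $u_j$ to $L_{X_j,V_j} u_j = 0$ with $u_j\big|_{\partial M_0} = f_{h,\xi}$ by an ansatz $u_j \sim e^{-x_n / h}\, a_j(x)\, f_{h,\xi}(x')$, where the symbol $a_j$ solves a first-order transport equation in $x_n$ driven by $X_j$, together with a remainder controlled by standard $H^1$-based Carleman/boundary estimates. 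Because $f_{h,\xi}$ is supported in $\partial M_0\backslash \Gamma$, hypothesis \eqref{main assumption} guarantees that $(u_1\mid_{\partial M_0}, \nabla^{X_1}_\nu u_1\mid_{\partial M_0\backslash \Gamma}) \in \mc{C}_{X_2,V_2,\partial M_0\backslash \Gamma}$, so there is an exact solution $\tilde u_2$ of $L_{X_2,V_2}\tilde u_2 = 0$ with the same Dirichlet trace as $u_1$ and with $\nabla^{X_2}_\nu \tilde u_2 = \nabla^{X_1}_\nu u_1$ on $\partial M_0\backslash \Gamma$.

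The standard Green's identity between $u_1$ and $\overline{\tilde u_2}$ then yields
\[
\int_{M_0}\bigl(L_{X_1,V_1} - L_{X_2,V_2}\bigr)u_1\cdot \overline{\tilde u_2}\, dv_g = 0,
\]
since all boundary contributions vanish by the matching traces on $\partial M_0\backslash\Gamma$ and by $u_1\mid_\Gamma = 0$. Expanding the difference of operators produces the principal term $2i\,\langle X_1-X_2,\, du_1\rangle$ plus zeroth-order contributions. After substituting the WKB ansatz and rescaling $x'\mapsto h^{1/2} x'$, $x_n\mapsto h\,x_n$, the leading asymptotics as $h\to 0$ of the above identity is proportional to $\langle (X_1-X_2)(p), \xi\rangle$; the lower-order terms and the WKB remainder contribute $o(1)$, using the $W^{3,p}$ and $W^{2,p}$ regularity to justify the expansion. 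Since $\xi$ is an arbitrary tangential covector and $p$ an arbitrary point in $\partial M_0\backslash \Gamma$, this gives $\iota^*_{\partial M_0}(X_1-X_2) = 0$ on $\partial M_0\backslash \Gamma$.

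The main obstacle is verifying that the WKB construction goes through up to the order in $h$ needed to isolate the first-order coefficient in the symbol — in particular, that the remainder estimate is stronger than the $h^{-1}$ blow-up of the principal term. This is precisely the point handled in \cite{BrSa}, \cite{salothesis}, \cite{gafa} for the global Cauchy data; since those arguments only use a cutoff around $p$ and never see boundary data outside $U\cap \partial M_0$, the same estimates apply verbatim here once the test functions are chosen to be supported in $\partial M_0\backslash \Gamma$.
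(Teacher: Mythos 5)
The route you sketch -- concentrate boundary data at a boundary point $p \in \partial M_0 \setminus \bar\Gamma$, build exponentially decaying WKB solutions, and read off $\langle (X_1-X_2)(p),\xi\rangle$ from an Alessandrini-type identity by rescaling -- is exactly the one the paper is invoking when it cites \cite{BrSa}, \cite{salothesis} and \cite{gafa}, and the localization is indeed the only thing that changes in passing to partial data.

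However, the Green's identity step is wrong as written, and this is the crux of the argument. You plug the matching solution $\tilde u_2 \in \ker L_{X_2,V_2}$ supplied by \eqref{main assumption} directly into the bilinear form and assert that all boundary contributions vanish ``by the matching traces and $u_1|_\Gamma=0$.'' They do not. To integrate by parts and discard the second factor you need it to lie in the kernel of the \emph{formal adjoint} $L_{X_2,V_2}^* = L_{X_2,\bar V_2}$; the matching solution lies in $\ker L_{X_2,V_2}$, so (unless $V_2$ is real) a volume term $(V_2-\bar V_2)u_1\overline{\tilde u_2}$ survives. Worse, even ignoring that, the boundary integral you have not killed is
\[
\int_{\partial M_0 \setminus \Gamma} \Bigl( u_1 \,\overline{\nabla^{X_2}_\nu \tilde u_2} - (\nabla^{X_2}_\nu u_1)\,\overline{\tilde u_2}\Bigr)
= -2i\,\mathrm{Im}\int_{\partial M_0\setminus\Gamma} (\nabla^{X_2}_\nu u_1)\,\overline{f_{h,\xi}},
\]
and with $\nabla^{X_2}_\nu u_1 = \bigl(h^{-1}a_1 + \partial_\nu a_1 + iX^\nu a_1 + \dots\bigr)f_{h,\xi}$ this is $O(1)$ (or $O(h^{-1})$ if the leading amplitude is not real on the boundary) -- the same size as the term $\int\langle X_1-X_2,\xi\rangle|f_{h,\xi}|^2$ you are trying to isolate, once you account for the factor $h$ gained from $\int_0^\infty e^{-2x_n/h}\,dx_n$. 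So the identity you write down is false and the leading asymptotics is contaminated.

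The correct derivation keeps two distinct objects. Fix a free test solution $v$ of the adjoint problem $L_{X_2,\bar V_2}v = 0$ with $v|_\Gamma = 0$ (a second concentrating WKB solution, normalized with $|\xi|_{g'}=1$, does the job), and use the matching solution $\tilde u_2$ only through the difference $w := u_1 - \tilde u_2$. Then $w|_{\partial M_0}=0$, and by the Cauchy-data match together with \eqref{boundary normal assumption} one has $\nabla^{X_2}_\nu w = 0$ on $\partial M_0\setminus\Gamma$, so in
\[
\int_{M_0} \bar v\,(L_{X_2,V_2}-L_{X_1,V_1})u_1\,dv_g = \int_{M_0}\bar v\,L_{X_2,V_2}w\,dv_g
\]
the integration by parts produces boundary terms that genuinely all vanish. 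With that corrected identity your scaling/stationary phase step goes through and yields $\langle(X_1-X_2)(p),\xi\rangle=0$.
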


An immediate consequence of this is the following. If $M$ is a surface containing $M_0$ such that $\Gamma\subset \partial M_0\cap \partial M$ and $M\backslash M_0$ is simply connected, then there exists $W^{1,\infty}(M)$ and $L^\infty(M)$ extensions of $X_j$ and $V_j$ respectively such that $X_1\mid_{M\backslash M_0} = X_2\mid_{M\backslash M_0}$, $\langle \nu, X_1 - X_2\rangle\mid_{\partial M}=0$, and $V_1\mid_{M\backslash M_0} = V_2\mid_{M\backslash M_0}$. Furthermore, on the surface $M$ the Cauchy data for the extended coefficients, which we still denote by $X_j$ and $V_j$, satisfy ${\cal C}_{X_1,V_1,\partial M\backslash \Gamma}={\cal C}_{X_2,V_2,\partial M\backslash \Gamma}$.

Observe that if one multiplies the metric $g$ by a conformal factor $e^K$, the above relation for the Cauchy data holds for $V_j$ replaced by $e^{-K}V_j$. As such we may assume without loss of generality that for each connected component of $\partial M$ there exists an interior neighbourhood which is isometric to the flat cylinder $[0,\epsilon]\times S^1$ with metric $dt^2+d\theta ^2 $ (\cite{mazzeo taylor}). Furthermore, if $q_0\in  \partial M\backslash \bar\Gamma$ and $\delta >0$ are chosen so that $M_0$ is contained in $M_0' := M\backslash \bar B_\delta(q_0)$ with $B_\delta(q_0) := \{p\in M^D\mid d(p,q_0) <\delta\}$, then on the surface $M_0'$ one again has ${\cal C}_{X_1,V_1,\partial M_0'\backslash \Gamma'}={\cal C}_{X_2,V_2,\partial M_0'\backslash \Gamma'}$ and $\langle \nu, X_1 - X_2\rangle\mid_{\partial M_0'}=0$. Here $\Gamma':= \partial M_0' \cap \partial M$ contains $\bar\Gamma$. We summarize this discussion in the following 
\begin{coro}
\label{same cauchy data in M0'}
Let $M$ and $M_0'$ be the surfaces defined above. There exists $W^{1,\infty}(M)$ and $L^\infty(M)$ extensions to the coefficients $X_j$ and $V_j$ respectively such that on $M$ one has $X_1\mid_{M\backslash M_0} = X_2\mid_{M\backslash M_0}$, $V_1\mid_{M\backslash M_0} = V_2\mid_{M\backslash M_0}$. On the surface $M_0'$ one has $\langle \nu, X_1 - X_2\rangle \mid_{\partial M_0'} = 0$ and the Cauchy data satisfies ${\cal C}_{X_1,V_1,\partial M_0'\backslash \Gamma}={\cal C}_{X_2,V_2,\partial M_0'\backslash \Gamma}$.
\end{coro}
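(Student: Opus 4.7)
The plan is to promote the given equality of Cauchy data on $M_0$ to an equality on the enlarged domain $M_0'$ via two operations: extending the coefficients past $\partial M_0$ so that both pairs agree outside $M_0$, and then excising a small half-disk at a point of $\partial M$ far from $\Gamma$. The geometric input is Proposition~\ref{boundary value determination}; the analytic input is a classical gluing argument for transmission problems with matched Cauchy data.

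First, Proposition~\ref{boundary value determination} gives $\iota^*_{\partial M_0}X_1 = \iota^*_{\partial M_0}X_2$ on $\partial M_0\backslash\Gamma$, and combined with \eqref{boundary normal assumption} this yields $X_1 = X_2$ as full $1$-forms on $\partial M_0\backslash\Gamma$; analogous boundary determination gives $V_1 = V_2$ there. Choose $M\supset M_0$ with $M\backslash M_0$ simply connected and $\Gamma\subset \partial M\cap\partial M_0$, and perform a conformal change in a collar of $\partial M$ using \cite{mazzeo taylor} so that each boundary component has a flat cylindrical neighborhood $[0,\epsilon]\times S^1$; the conformal factor is absorbed into $V_j$ using the conformal invariance of the Laplacian on surfaces, and this preserves both the Cauchy data spaces and the gauge condition \eqref{boundary normal assumption}. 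Take any $W^{1,\infty}, L^\infty$ Sobolev extensions of $X_1, V_1$ to $M$, and define the extensions of $X_2, V_2$ to equal $X_1, V_1$ on $M\backslash M_0$; the boundary matching above ensures that the extended $X_2, V_2$ retain the required regularity across $\partial M_0\backslash \Gamma$.

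Now pick $q_0\in \partial M\backslash\bar\Gamma$ and $\delta>0$ small enough that $M_0\subset M_0' := M\backslash \bar B_\delta(q_0)$. The condition $\langle \nu, X_1-X_2\rangle = 0$ on $\partial M_0'$ holds on $\partial M_0'\cap\partial M$ by \eqref{boundary normal assumption} and on the newly exposed boundary $\partial B_\delta(q_0)\cap M$ because $X_1 = X_2$ there by construction. To propagate the Cauchy data equality, let $u_1\in H^1(M_0')$ solve $L_{X_1,V_1}u_1 = 0$ with $\supp(u_1|_{\partial M_0'})\subset \partial M_0'\backslash \Gamma'$. The restriction $u_1|_{M_0}$ has boundary trace supported in $\partial M_0\backslash\Gamma$, by inheritance on $\partial M_0\cap\partial M$ and automatically on the interior interface $\partial M_0\cap \mathrm{int}(M_0')$ since this interface is disjoint from $\Gamma$; hence $u_1|_{M_0}$ gives an element of $\mathcal{C}_{X_1,V_1,\partial M_0\backslash\Gamma}$, and by \eqref{main assumption} there exists $u_2^{(0)}\in H^1(M_0)$ solving $L_{X_2,V_2}u_2^{(0)} = 0$ with the same Cauchy data. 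Setting $u_2 := u_2^{(0)}$ on $M_0$ and $u_2 := u_1$ on $M_0'\backslash M_0$, the matched Dirichlet and $\nabla^X_\nu$-conormal traces across the interior interface, together with agreement of coefficients on $M_0'\backslash M_0$, produce $u_2\in H^1(M_0')$ solving $L_{X_2,V_2}u_2 = 0$; since $u_2 = u_1$ on $\partial M_0'$, the Cauchy data on $\partial M_0'\backslash\Gamma'$ coincide. The reverse inclusion is symmetric.

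The main technical point is the gluing step: one must check that matching both Dirichlet and gauge-covariant conormal traces across the interior interface, combined with equality of coefficients across that interface, yields a genuine weak $H^1$ solution on $M_0'$ with no distributional jump in the weak formulation. The key observation is that the correct quantity to match is the gauge-invariant normal derivative $\nabla^X_\nu u = \partial_\nu u + i X(\nu) u$ rather than $\partial_\nu u$ alone, so that boundary terms from integration by parts against $L_{X,V}$ on the two sides of the interface cancel exactly.
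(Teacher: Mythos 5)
Your proof is correct and follows the same route the paper takes (the paper states the corollary after an informal discussion rather than a formal proof): boundary determination from Proposition~\ref{boundary value determination} combined with \eqref{boundary normal assumption} gives full matching of $X_1, X_2$ on $\partial M_0\backslash\Gamma$, one extends so the coefficients agree on $M\backslash M_0$, conformally straightens the metric to flat cylinders near $\partial M$, removes the half-disk at $q_0$, and verifies the equality of Cauchy data on $M_0'$ by gluing. The one genuinely useful thing you add is spelling out the transmission argument: you correctly observe that the quantity whose traces match is the covariant normal derivative $\nabla^X_\nu u = \partial_\nu u + iX(\nu)u$, so that the interface boundary terms cancel because $\iota_\nu(X_1-X_2)=0$ on $\partial M_0\backslash\Gamma$; the paper simply asserts the conclusion. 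Two minor remarks: the invocation of ``analogous boundary determination gives $V_1=V_2$'' at the interface is superfluous --- since the extension of $V_2$ need only be $L^\infty$, no continuity at $\partial M_0$ is required, and the gluing does not use it; and when you write that $\langle\nu,X_1-X_2\rangle=0$ on $\partial M_0'\cap\partial M$ ``by \eqref{boundary normal assumption}'', you should note that \eqref{boundary normal assumption} is stated on $\partial M_0$, so on $\partial M\backslash\partial M_0$ one instead uses that $X_1=X_2$ identically there by the choice of extension.
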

The advantage in working with $M_0'\subset M$ with flat cylindrical metric near $\partial M$ is that its double as a subset of $M^D$ with metric given by $R*g = g$ is a manifold with both smooth metric and boundary. 

\begin{subsection}{Boundary Values of $F_{A_j}$}
Let $M$ and $M'_0$ be the surface constructed in the previous section. Let $X\in W^{1,\infty}(M, T^*M)$ be a real-valued 1-form on $M$ which can be decomposed into its $T^*_{0,1} M$ and $T^*_{1,0} M$ component which we denote by $A$ and $\bar A$ respectively. If $\Gamma':= \partial M_0' \cap \partial M$, Proposition \ref{surject} asserts that for all $p\in (1,\infty)$ one can find $\alpha\in W^{2,p}(M)$ which is real-valued along $\Gamma'$ solving $\bar\partial \alpha = A$ so that 
\[\bar\partial e^{i\alpha} = i e^{i\alpha} A\ \ \text{in}\ \ \ M_0',\ \ \ \ |e^{i\alpha}|\mid_{\Gamma'} = 1.\]

Of course, $e^{i\alpha}$ is not the unique non-vanishing solution to this boundary value problem. Indeed, one can multiply $e^{i\alpha}$ by any non-vanishing holomorphic function which is unitary along $\Gamma'$ to obtain another solution. It turns out the solutions of these boundary value problems are closely related to the Cauchy data of $L_{X,V}$.
\begin{proposition}
\label{boundary holomorphic}
If $X_j\in W^{1,\infty}(M, T^*M)$ are real valued 1-forms and $V_j\in L^{\infty}(M)$ for $j=1,2$ satisfy $\langle \nu, X_1 - X_2\rangle\mid_{\partial M} = 0$ and for $p\in (1,\infty)$ large, let $\alpha_j \in W^{2,p}(M)$ be a solution of 
\begin{eqnarray}
\label{alpha equation}
\bar\partial \alpha_j = A_j,\ \ \ \ \alpha_j\mid_{\Gamma'}\in \R.
\end{eqnarray}
Suppose $\cal{C}_{X_1, V_1, \partial M_0'\backslash\Gamma'} = \cal{C}_{X_2, V_2, \partial M_0'\backslash\Gamma}$, then\\
i) $e^{i(\alpha_1 -\alpha_2)}\mid_{\partial M_0' \backslash \Gamma'}$ extends to a non-vanishing holomorphic function $\Psi$ on $M_0'$ which is unitary along $\Gamma'$. Furthermore, $\Psi\mid_{M_0} \in C^\infty(M_0)$ up to the boundary.\\
ii) $e^{i(\bar\alpha_1 - \bar\alpha_2)}\mid_{\partial M_0' \backslash \Gamma'}$ extends to a non-vanishing antiholomorphic function $\Psi$ on $M_0'$ which is unitary along $\Gamma'$. Furthermore, $\Psi\mid_{M_0} \in C^\infty(M_0)$ up to the boundary.\\ 


\end{proposition}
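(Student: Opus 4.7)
The plan is to establish part (i); part (ii) then follows from (i) by complex conjugation, since the complex conjugate of a holomorphic function on a Riemann surface is antiholomorphic. Set
\[
\Psi := F_{A_1} F_{A_2}^{-1} = e^{i(\alpha_1 - \alpha_2)},
\]
so that $\Psi$ is continuous and non-vanishing on $\partial M_0'$, unitary on $\Gamma'$ (because $\alpha_j|_{\Gamma'}\in\R$), and satisfies $\bar\partial \Psi = i(A_1-A_2)\Psi$. To prove that $\Psi|_{\partial M_0'\setminus \Gamma'}$ extends to a holomorphic function on $M_0'$ one appeals to the boundary value characterization in the style of Proposition \ref{boundary value of holomorphic}, applied either directly on $M_0'$ or, after extending $\Psi$ across $\Gamma'$ by the reflection $\tilde\Psi := 1/\overline{\Psi\circ R}$ (consistent on $\Gamma'$ by unitarity), via Lemma \ref{perpendicular to all conjugate even} on the doubled surface $\dot M^D$. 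It therefore suffices to verify the orthogonality
\[
\int_{\partial M_0'}\Psi \,\iota^{*}_{\partial M_0'}\eta = 0
\]
for every holomorphic $(1,0)$-form $\eta$ on a neighborhood of $M_0'$. Using Stokes' theorem with $\bar\partial\Psi = i(A_1-A_2)\Psi$ and $\bar\partial\eta = 0$, this boundary orthogonality is equivalent to the bulk identity
\[
\int_{M_0'}\Psi (A_1-A_2)\wedge\eta = 0.
\]

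The plan for the bulk identity is to extract it from the equality of Cauchy data via a Green's-type formula applied to a specially constructed pair of solutions. Given $\eta$ as above, let $v$ be a local holomorphic primitive with $\partial v = \eta$ (a global primitive exists on $\dot M^D$ after writing $\eta$ as a limit of exact forms using Proposition \ref{surject} and Lemma \ref{control the zero}). Then, using the shifted $H^1$-Carleman solvability of Corollary \ref{H1 solvability} with a harmonic Morse weight furnished by Proposition \ref{criticalpoints}, I would construct a solution $u_2 \in H^1(M_0')$ of $L_{X_2,V_2} u_2 = 0$ of the form $u_2 = F_{A_2}^{-1}(v + r_2)$ with $u_2|_{\Gamma'} = 0$ and corrector $\|r_2\|_{H^1} = o(1)$ as the Carleman parameter $h\to 0$. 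By the Cauchy data hypothesis \eqref{main assumption}, one obtains a matching solution $u_1 \in H^1(M_0')$ of $L_{X_1,V_1}u_1 = 0$ with $u_1|_{\Gamma'} = 0$ whose Cauchy data agrees with that of $u_2$ on $\partial M_0'\setminus \Gamma'$.

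Green's formula applied to the pair $(u_1,u_2)$ then kills every boundary contribution: the terms on $\partial M_0'\setminus \Gamma'$ vanish by Cauchy data matching combined with $\iota_\nu(X_1-X_2) = 0$ from \eqref{boundary normal assumption}, and the terms on $\Gamma'$ vanish because both $u_j$ vanish there. The surviving bulk identity, after using the conjugation relation $F_{A_j}^{-1}\bar\partial F_{A_j} = iA_j$ and $\bar\partial v = 0$, has leading-order term precisely $\int_{M_0'}\Psi (A_1-A_2)\wedge\eta$, modulo an error controlled by $r_2$ in $H^1$ together with lower-order terms involving the Weil-type potentials $Q_j = *dX_j + V_j$. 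Letting $h\to 0$ and exploiting the richness of the family of allowable $\eta$ (from Proposition \ref{surject}(iv) and Lemma \ref{control the zero}) to cancel the remaining $Q_j$-dependent contributions yields the desired vanishing. The non-vanishing, unitarity on $\Gamma'$, and $C^\infty(M_0)$-regularity of the holomorphic extension $\Psi$ are then straightforward consequences of the corresponding boundary properties together with elliptic regularity for $\bar\partial$ applied to $\bar\partial\Psi = i(A_1-A_2)\Psi$ using $A_j\in W^{2,p}_{loc}(M_0)$.

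The principal technical obstacle is the construction of the corrector $r_2$: it must simultaneously solve the inhomogeneous conjugated equation on $M_0'$, vanish on $\Gamma'$, and remain small in $H^1$ uniformly as $h\to 0$. This is exactly the purpose of the shifted Carleman estimate of Section 3, whose boundary-respecting refinement (so that the constructed solution keeps vanishing on $\Gamma'$) was the central novelty there. A secondary difficulty lies in arranging enough test forms $\eta$ to eliminate all lower-order $Q_j$-contributions, which is why the density of holomorphic 1-forms with prescribed vanishing behavior developed in Section 2 is needed.
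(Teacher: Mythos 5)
Your high-level instinct---Stokes reduces the orthogonality condition for the boundary value of $\Psi=F_{A_1}F_{A_2}^{-1}$ against holomorphic $1$-forms to a bulk integral, which one then tries to extract from the Cauchy data via a Green-type identity and CGOs---is in the right spirit, but as written the argument has several gaps that make it collapse.

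First, the ansatz $u_2 = F_{A_2}^{-1}(v+r_2)$ with $v$ holomorphic contains no oscillatory phase $e^{\pm\Phi/h}$. Without a phase there is nothing for the semiclassical parameter $h$ to act on: the residual $Q_2 F_{A_2}^{-1}v$ does not decay as $h\to 0$, and conjugating $L_{X_2,V_2}$ by $e^{\varphi/h}$ so as to invoke Corollary \ref{H1 solvability} turns $-Q_2F_{A_2}^{-1}v$ into $-e^{-\varphi/h}Q_2F_{A_2}^{-1}v$, which blows up on $\{\varphi<0\}$ and is not in $L^2$. In addition, even if a suitable phase were inserted, a single term $e^{\Phi/h}F_{A_2}^{-1}v$ does not vanish on $\Gamma'$; the paper's ansatz \eqref{u0 ansatz} and the solution $u_2$ in \eqref{u2} are differences of two terms that cancel on $\Gamma'$ precisely because $\Phi$ and $\alpha_j$ are real there. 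That forced pairing is what makes both $\overline{\Psi}$ and $\Psi^{-1}$ appear downstream.

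Second, once the boundary integral identity \eqref{boundary integral id in boundary holomorphic} is invoked, the $(\bar A_1-\bar A_2)\wedge\bar\partial u_1$ and $(V_1-V_2)u_1\bar u_2$ contributions are of the same size as the $(A_1-A_2)\wedge\partial u_1$ term; they cannot be ``cancelled by the richness of $\eta$'' since they are not expressible as bilinear pairings with $\eta$ alone. In the paper these contributions are killed by oscillation: the phase pairings $e^{\pm 2i\psi/h}$ tend weakly to $0$, while only the non-oscillating cross terms survive, which is exactly why the CGO phases are indispensable and why the final relation involves both $e^{i(\bar\alpha_2-\bar\alpha_1)}$ and $e^{i(\alpha_2-\alpha_1)}$.

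Third, your target identity $\int_{\partial M_0'}\Psi\,\iota^{*}\eta=0$ for all holomorphic $\eta$ on $M_0'$ is actually a strictly stronger statement than the proposition: it would force the holomorphic extension to coincide with $e^{i(\alpha_1-\alpha_2)}$ on $\Gamma'$, whereas the proposition only asserts that the extension is unitary on $\Gamma'$. The Cauchy data hypothesis does not deliver the full-boundary orthogonality; what one can actually extract, after the careful CGO construction, is \eqref{two parts of boundary holomorphic},
\[
0=\int_{\partial M_0'\setminus\Gamma'}(\bar F_{A_2})^{-1}\bar F_{A_1}\,\iota^{*}b \;-\; F_{A_2}F_{A_1}^{-1}\,\iota^{*}\bar b,
\]
an integral only over $\partial M_0'\setminus\Gamma'$ in which $\overline{\Psi}$ and $\Psi^{-1}$ appear paired with $b$ and $\bar b$. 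Converting this two-term, half-boundary relation into a usable orthogonality is the whole point of doubling the surface: the reflected function $F_{1,2}$ of \eqref{def of F12} and the conjugate-even extension of $b$ combine the two pieces into a single integral over $\partial\dot M^D$, to which Lemma \ref{perpendicular to all conjugate even} applies. This reflection step is essential and is absent from your plan. Finally, the claim that a global holomorphic primitive $v$ of $\eta$ on $\dot M^D$ can be obtained ``by writing $\eta$ as a limit of exact forms'' is unfounded: exact holomorphic $1$-forms are a closed, finite-codimension subspace (vanishing periods), not dense, so the reduction to $\partial v=\eta$ does not go through in general.
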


\noindent{\bf Proof.} Since (ii) and (i) are equivalent we will only prove (ii).


Since $\alpha_j\mid_{\Gamma'}\in \R$, one can define a Lipschitz piece-wise smooth function $F_{1,2}\in W^{1,\infty}(\dot M^D)$ on $\dot M^D\cup \partial \dot M^D$ by
\begin{eqnarray}
\label{def of F12}
F_{1,2} =
\left\{
	\begin{array}{ll}
		 e^{i(\bar\alpha_2 - \bar\alpha_1)} & \mbox{on } M_0' \\
		R^*e^{i(\alpha_2 - \alpha_1)} & \mbox{on } R(M_0'). 
	\end{array}
\right.
\end{eqnarray}
In fact one can show that $F_{1,2}\in W^{2,p}(\dot M^D)$. Indeed, since $\im(\alpha_1 -\alpha_2)$ vanishes along $\Gamma'$ by assumption, its odd extension across $\Gamma '$ is an element of $W^{2,p}(\dot M^D)$. To show that $F_{1,2}\in W^{2,p}(\dot M^D)$ we need to check that the even extension across $\Gamma'$ of $\re(\alpha_1 - \alpha_2)$ has two derivatives as well. This is equivalent to showing that $\pl_\nu \re(\alpha_1 - \alpha_2)$ vanishes along $\Gamma'$. This can be done by using $\langle\nu, X_1 - X_2\rangle = 0$ along $\partial M$ (Corollary \ref{same cauchy data in M0'}
) and the fact that 
\[0=\langle \nu,X_1 - X_2 \rangle= \langle \nu,(A_1- A_2) + (\bar A_1 - \bar A_2) \rangle= \langle \nu,\bar\pl (\alpha_1 - \alpha_2) + \pl(\bar\alpha_1 - \bar\alpha_2)\rangle.\]
Writing this out in boundary normal coordinates yields that $\pl_\nu \re(\alpha_1 - \alpha_2)=0$ along $\Gamma'$ and thus  $F_{1,2}\in W^{2,p}(\dot M^D)$.

We have the following Lemma for the boundary value of $F_{1,2}\mid_{\partial \dot M^D}\in W^{2-\frac{1}{p},p}(\partial \dot M^D)$ defined by \eqref{def of F12}:
\begin{lemma}
\label{antimeromorphic statement}
The function $F_{1,2}\mid_{\partial \dot M^D}$ has an antiholomorphic extension $\Psi$ into the surface $\dot M^D$. 
\end{lemma}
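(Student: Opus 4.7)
The plan is to reduce the lemma to a boundary orthogonality condition, rewrite that condition as a volume integral via Stokes' theorem, and then eliminate the remaining volume integral using the Cauchy data equality combined with the reflection symmetry.

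Since $F_{1,2}|_{\partial \dot M^D}$ admits an antiholomorphic extension if and only if $\overline{F_{1,2}}|_{\partial \dot M^D}$ admits a holomorphic one, Proposition \ref{boundary value of holomorphic} reduces the lemma to verifying
\[\int_{\partial \dot M^D} \overline{F_{1,2}} \, \iota^* \eta = 0\]
for every holomorphic 1-form $\eta \in C^\infty(\dot M^D; T^*_{1,0}\dot M^D)$. Decomposing $\eta = \eta^+ + \eta^-$ into its conjugate-even and conjugate-odd parts as in Section \ref{sec:double}, and using that multiplication by $i$ swaps the two types, I may restrict attention to conjugate-even $\eta$, i.e.\ $R^*\eta = \bar\eta$.

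Applying Stokes' theorem and computing $\bar\partial \overline{F_{1,2}}$ from the piecewise definition \eqref{def of F12} should yield
\[\int_{\partial \dot M^D} \overline{F_{1,2}} \, \iota^* \eta \;=\; i \int_{\dot M^D} \overline{F_{1,2}} \, (A_1 - A_2) \wedge \eta,\]
where on $M_0'$ the relation $\bar\partial \alpha_j = A_j$ is used, while on $R(M_0')$ the anti-conformality of $R$ together with $R^* A_j = \bar A_j$ (the reflection-even extension from Corollary \ref{same cauchy data in M0'}) produces the same formula in the extended coefficients. Since $X_1 = X_2$ outside $M_0$ by Corollary \ref{same cauchy data in M0'}, the integrand is supported in $M_0 \cup R(M_0)$, and the conjugate-even hypothesis on $\eta$ identifies the contribution from $R(M_0)$ with the complex conjugate of the $M_0$ contribution, reducing the task to showing $\re \int_{M_0} e^{i(\alpha_1 - \alpha_2)} (A_1 - A_2) \wedge \eta = 0$.

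The final step, which I expect to be the crux of the argument, is to derive this vanishing from the equality of partial Cauchy data. The strategy is to interpret the remaining integrand as the interior term in an Alessandrini-type integral identity for the pair $L_{X_1,V_1}, L_{X_2,V_2}$. Concretely, a holomorphic primitive $\Phi$ with $\partial \Phi = \eta$, constructed locally and then patched together via the surjectivity of $\bar\partial$ (Proposition \ref{surject}) and the flexibility of Lemma \ref{control the zero} to cope with the possibly nontrivial topology of $\dot M^D$, gives rise to candidate solutions $u_j \sim F_{A_j}^{-1}\Phi$ of $L_{X_j,V_j} u_j = 0$ vanishing on $\Gamma'$ after a solvability correction. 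Green's identity applied to this pair, combined with the hypothesis that the Cauchy data on $\partial M_0' \setminus \Gamma'$ agree, forces the interior integral to vanish. The unitarity $|F_{A_j}|=1$ on $\Gamma'$ and the reflection symmetry guarantee that the boundary contributions on $\Gamma'$ close up cleanly, while the restriction to conjugate-even $\eta$ ensures that no uncontrolled monodromy terms survive the patching of $\Phi$.
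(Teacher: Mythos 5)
Your reduction of the lemma to the vanishing of $\int_{\partial\dot M^D}\overline{F_{1,2}}\,\iota^*\eta$ and the subsequent Stokes computation down to a volume integral involving $\bar\partial\overline{F_{1,2}} = i(A_1-A_2)\overline{F_{1,2}}$ on $M_0'$ is a legitimate reformulation (modulo keeping track of the contribution on $R(M_0')$, where $\bar\partial\overline{F_{1,2}}$ pulls back $\bar A_1-\bar A_2$ rather than $A_1-A_2$; your reflection-even convention absorbs this). The restriction to conjugate-even $\eta$ is also fine. However, the final step is a genuine gap, and it is the heart of the lemma.

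The mechanism you propose for deriving $\int e^{i(\alpha_1-\alpha_2)}(A_1-A_2)\wedge\eta = 0$ from the Cauchy-data equality does not work. Writing $u_j \sim F_{A_j}^{-1}\Phi$ where $\partial\Phi = \eta$ does not produce an approximate solution of $L_{X_j,V_j}u_j=0$: by the factorization \eqref{LXV in in dbar form}, $L_{X_j,V_j}(F_{A_j}^{-1}\Phi) = Q_j F_{A_j}^{-1}\Phi$, which is $O(1)$, not small, so the ``solvability correction'' is also $O(1)$ and swamps the putative leading term. The actual CGO machinery requires a fixed Morse holomorphic phase $\Phi$ with large parameter $h^{-1}$, and in the ansatz \eqref{u0 ansatz} the amplitude $b$ is divided by $\bar\partial\bar\Phi$, which vanishes at the critical points of $\Phi$. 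Consequently $b$ must vanish to high order there, and the resulting orthogonality condition only holds for holomorphic 1-forms with prescribed zeros. This is precisely why the paper cannot reach holomorphicity in one pass: Lemma \ref{perpendicular to all conjugate even} only yields a \emph{meromorphic} extension with possible poles at the critical points of $\Phi$ and their reflections. The decisive and entirely absent step in your argument is the paper's final move: vary the Morse function $\tilde\Phi$ so that $p_1$ is not a critical point, obtain a second meromorphic extension with a different pole set, and invoke uniqueness of meromorphic continuation to conclude the poles are removable. Without some version of this pole-removal argument, your approach cannot get past a meromorphic statement.
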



Assuming Lemma \ref{antimeromorphic statement}, we need to show that $\Psi$ is non-vanishing. To this end we switch the indices $1$ and $2$ in \eqref{def of F12} to show that $F_{2,1}\mid_{\partial \dot M^D} = F_{1,2}^{-1}\mid_{\partial \dot M^D}$ is the boundary value of an antiholomorphic function on $\dot M^D$. By uniqueness, this antiholomorphic function must be $\Psi^{-1}$ and we have that $\Psi$ is non-vanishing.

We now show that $\Psi\mid_\Gamma$ is unitary. To this end, observe that $F_{1,2}\mid_{\partial \dot M^D}$ satisfies the symmetry condition $\overline{(R^*F_{1,2})}^{-1}\mid_{\partial \dot M^D} = F_{1,2}\mid_{\partial\dot M^D}$. By uniqueness this implies that the antiholomorphic function $\overline{(R^*\Psi)}^{-1}$ is identical to $\Psi$. As such, since $R$ is the identity on $\Gamma'$, we have that $\overline{(\Psi)}^{-1}(p) = \Psi(p)$ for all $p\in \Gamma'$; that is, $\Psi\mid_{\Gamma'}$ is unitary. Restricting the function $\Psi$ to $M_0'$ we have the desired antiholomoprhic extension to $e^{i(\bar\alpha_1 - \bar\alpha_2)}\mid_{\partial M_0' \backslash \Gamma'}$. The smoothness of $\Psi$ on the closure of $M_0$ follows from the fact that $M_0$ is compactly contained in $\dot M^D$.\qed\\ 
An immediate consequence of Proposition \ref{boundary holomorphic} is the following
\begin{coro}
\label{same boundary condition for conjugation factor}
There exists an open subset $\Gamma_0\subset \partial M_0$ containing $\Gamma$ whose complement $\partial M_0\backslash \Gamma_0$ contains an open subset such that for all $p\in (1,\infty)$ one can choose solutions $F_{A_j}, F_{\bar A_j}\in W^{2,p}(M_0)\cap W^{4,p}_{loc}(M_0)$ solving
\begin{eqnarray}
\label{conjugating factor with bc}
\bar\pl F_{A_j} = iA_j F_{A_j}\ \ \text{in}\ \ M_0,\ \ \ \ |F_{A_j}|\mid_{\Gamma_0} = 1
\end{eqnarray}
and 
\begin{eqnarray}
\label{antiholo conjugating factor with bc}
\pl F_{\bar A_j} = i\bar A_j F_{\bar A_j}\ \ \text{in}\ \ M_0,\ \ \ \ |F_{\bar A_j}|\mid_{\Gamma_0} = 1
\end{eqnarray}
such that $F_{A_1} \mid_{\partial M_0\backslash \Gamma_0}= F_{A_2}\mid_{\partial M_0\backslash \Gamma_0}$ and $F_{\bar A_1}\mid_{\partial M_0\backslash \Gamma_0}= F_{\bar A_2}\mid_{\partial M_0\backslash \Gamma_0}$.
\end{coro}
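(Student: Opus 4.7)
The plan is to produce the desired $F_{A_j}$ and $F_{\bar A_j}$ on the larger surface $M_0'$ by combining Proposition~\ref{surject}(i) with Proposition~\ref{boundary holomorphic}, and then restrict to $M_0$. First I would invoke Proposition~\ref{surject}(i) to pick $\alpha_j,\beta_j \in W^{2,p}(M_0')$ solving $\bar\partial\alpha_j = A_j$ and $\partial\beta_j = \bar A_j$ with $\alpha_j|_{\Gamma'},\beta_j|_{\Gamma'} \in \mathbb{R}$; since $A_j \in W^{3,p}$, interior elliptic regularity for $\bar\partial$ (resp. $\partial$) upgrades these to $W^{4,p}_{\mathrm{loc}}$. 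The exponentials $e^{i\alpha_j}$ and $e^{i\beta_j}$ then give non-vanishing solutions of the two PDEs which are unitary on $\Gamma'$.

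Next, Proposition~\ref{boundary holomorphic}(i) furnishes a non-vanishing holomorphic $\Psi$ on $M_0'$ with $|\Psi||_{\Gamma'} = 1$ that extends $e^{i(\alpha_1-\alpha_2)}|_{\partial M_0'\setminus\Gamma'}$, while the $\partial$-analogue of this statement (or part (ii) applied to the $\beta_j$) yields a non-vanishing antiholomorphic $\tilde\Psi$ with the corresponding property. Setting
\[
F_{A_1} := e^{i\alpha_1}, \quad F_{A_2} := \Psi\, e^{i\alpha_2}, \quad F_{\bar A_1} := e^{i\beta_1}, \quad F_{\bar A_2} := \tilde\Psi\, e^{i\beta_2},
\]
the PDEs $\bar\partial F_{A_j} = iA_j F_{A_j}$ and $\partial F_{\bar A_j} = i\bar A_j F_{\bar A_j}$ are preserved because $\bar\partial\Psi = 0$ and $\partial\tilde\Psi = 0$, unitarity on $\Gamma'$ is immediate, and the matching identities $F_{A_1} = F_{A_2}$ and $F_{\bar A_1} = F_{\bar A_2}$ on $\partial M_0' \setminus \Gamma'$ follow by construction.

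It then remains to identify a suitable $\Gamma_0 \subset \partial M_0$ by tracking how $\partial M_0$ sits inside $\overline{M_0'}$. I would take $\Gamma_0$ to contain $\Gamma$ (this is automatic since $\Gamma \subset \Gamma'$, so the inherited unitarity holds there) and to leave the complement as a single open segment of $\partial M_0$ on which the matching identity transfers from $\partial M_0' \setminus \Gamma'$. The main obstacle is thus essentially topological: arranging the enveloping surface $M$, the point $q_0 \in \partial M \setminus \bar\Gamma$, and the radius $\delta$ entering the construction of $M_0'$ so that $\partial M_0 \setminus \bar\Gamma_0$ is exactly an open arc on which the inherited identities can be read off. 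This is handled using the flexibility in the construction preceding Corollary~\ref{same cauchy data in M0'}, in particular the flat cylindrical structure near $\partial M$, together with the smoothness of $\Psi|_{M_0}$ and $\tilde\Psi|_{M_0}$ asserted in Proposition~\ref{boundary holomorphic}. The claimed regularity $F_{A_j}, F_{\bar A_j} \in W^{2,p}(M_0) \cap W^{4,p}_{\mathrm{loc}}(M_0)$ follows directly from the regularity of $\alpha_j$, $\beta_j$, $\Psi$, and $\tilde\Psi$ together with the product structure of the definitions.
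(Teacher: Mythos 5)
Your construction on $M_0'$ is correct and matches the paper: take $\alpha_j$ solving $\bar\partial\alpha_j = A_j$ with $\alpha_j|_{\Gamma'}\in\R$ via Proposition \ref{surject}(i), set $F_{A_1}=e^{i\alpha_1}$ and $F_{A_2}=\Psi e^{i\alpha_2}$ with $\Psi$ the holomorphic extension from Proposition \ref{boundary holomorphic}(i), and similarly for $F_{\bar A_j}$. You also correctly observe $|F_{A_j}|=1$ on $\Gamma'$ and $F_{A_1}=F_{A_2}$ on $\partial M_0'\setminus\Gamma'$.

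The gap is in the last step, which you dismiss as ``essentially topological.'' The arc $\partial M_0\setminus\Gamma_0$ is \emph{not} part of $\partial M_0'\setminus\Gamma'$; it sits in the interior of $M_0'$, specifically in the closure of $M_0'\setminus M_0$. So the equality $F_{A_1}=F_{A_2}$ that you established on $\partial M_0'\setminus\Gamma'$ does not ``transfer'' to $\partial M_0\setminus\Gamma_0$ merely by arranging $\delta$ and $q_0$ suitably. What the paper actually does is analytic: by Corollary \ref{same cauchy data in M0'} the extensions were chosen with $X_1=X_2$ on $M\setminus M_0$, hence $A_1=A_2$ on $M_0'\setminus M_0$, which makes $F_{A_1}F_{A_2}^{-1}$ holomorphic on $M_0'\setminus M_0$. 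Since this holomorphic function equals $1$ on the boundary arc $\partial M_0'\setminus\Gamma'$, unique continuation (Schwarz reflection across the arc) forces $F_{A_1}F_{A_2}^{-1}\equiv 1$ throughout $M_0'\setminus M_0$, and in particular on $\partial M_0\setminus\Gamma_0$ with $\Gamma_0:=\partial M_0\cap\partial M_0'$. Without invoking the agreement of the extended $X_j$ off $M_0$ and the resulting holomorphicity plus unique continuation, the conclusion does not follow; you should add this step explicitly.
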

\noindent{\bf Proof. }We will only prove the statement for $F_{A_j}$ as the one for $F_{\bar A_j}$ can be achieved by the same argument. Let $M$ be a surface with boundary containing $M_0$ such that $\Gamma\subset \partial M_0\cap\partial M$. Define $M_0'$ by removing a small half-disk around boundary point $q_0\in \partial M \backslash \bar\Gamma$ such that $M_0 \subset M_0'$ and $\Gamma' := \partial M_0'\cap \partial M$ compactly contains $\Gamma$.

By Corollary \ref{same cauchy data in M0'} there exists $W^{1,\infty}(M)$ extensions of $X_j$ and $V_j$ respectively such that ${\cal C}_{X_1,V_1, \partial M_0'\backslash \Gamma'} ={\cal C}_{X_2,V_2, \partial M_0'\backslash \Gamma'}$, $X_1 = X_2$ on $M\backslash M_0$, and $\langle \nu, X_1-X_2\rangle\mid_{\partial M_0'}=0$. Lemma \ref{surject} shows that for all $p\in (1,\infty)$ if denotes $A_j := \pi_{0,1} X_j$ then there exists $\alpha_j\in W^{2,p}(M)$ solving
\[\bar\partial \alpha_j = A_j,\ \ \ \ \alpha_j\mid_{\Gamma'}\in \R.\]
Observe that since $X_j\mid_{M_0} \in W^{3,\infty}(M_0)$ elliptic regularity stipulates that $\alpha_j\in W^{4,p}_{loc}(M_0)$ for all $p\in (1,\infty)$. Proposition \ref{boundary holomorphic} asserts that the boundary value $e^{i(\alpha_1 - \alpha_2)}\mid_{\partial M'_0 \backslash\Gamma'}$ extends to a non-vanishing holomorphic function $\Psi$ on $M_0'$ which is unitary along $\Gamma'$ and smooth on the closure of $M_0$. 

Setting $F_{A_1} := e^{i\alpha_1}$ and $F_{A_2} := \Psi e^{i\alpha_2}$ one has that 
\[\bar\partial F_{A_j} = F_{A_j} A_j\ \ \text{in}\ \ M_0', \ \ F_{A_1} = F_{A_2}\ \ \text{on}\ \ \partial M_0'\backslash \Gamma',\ \ |F_{A_j}| = 1\ \ \text{on}\ \ \Gamma'.\]
Furthermore, using the fact that $X_1 = X_2$ in $M_0'\backslash M_0$ one sees that $F_{A_1} F_{A_2}^{-1}$ is holomorphic in $M_0'\backslash M_0$. The boundary condition $F_{A_1} = F_{A_2}$ on $\partial M_0'\backslash\Gamma'$ forces $F_{A_1} = F_{A_2}$ in $M_0'\backslash M_0$ and therefore if one defines $\Gamma_0:= \partial M_0 \cap \partial M_0'\subset \Gamma'$ one has $F_{A_1} = F_{A_2}$ on $\partial M_0\backslash \Gamma_0$ and $|F_{A_j}|= 1$ on $\Gamma_0$. 
\qed

It remains to prove Lemma \ref{antimeromorphic statement} and it is the goal of the next subsection.
\end{subsection}

\begin{subsection}{\bf Proof of Lemma \ref{antimeromorphic statement}.}

The strategy which we will follow is to use the equivalence of the Cauchy data ${\cal C}_{X_1,V_1,\partial M_0'\backslash \Gamma}={\cal C}_{X_2,V_2,\partial M_0'\backslash \Gamma}$ on $M_0'$ to derive an orthogonality condition similar to the one in Lemma \ref{perpendicular to all conjugate even} on the double $\dot M^D$. This will be done through the standard boundary integral identity, assuming that ${\cal C}_{X_1,V_1,\partial M_0'\backslash \Gamma}={\cal C}_{X_2,V_2,\partial M_0'\backslash \Gamma}$ on $M_0'$,
{\small \begin{eqnarray}
\label{boundary integral id in boundary holomorphic}
0&=&\int_{M_0'} \bar u_2 (L_{X_2,V_2} - L_{X_1,V_1}) u_1 {\rm dvol}_g \\\nonumber &=&\int_{M_0'} \bar u_2 (A_1 - A_2) \wedge \partial u_1 - \bar u_2(\bar A_1 -\bar A_2)\wedge \bar\partial u_1 + \bar u_2 (V_1 -V_2) u_1{\rm dvol}_g
\end{eqnarray}}
for all solutions $u_j$ of $L_{X_j, V_j}u_j  = 0$ on $M_0'$ and vanishing on $\Gamma'\subset M_0'$.

Let $\Phi$ be the Morse holomorphic function on $M$ given by Proposition \ref{criticalpoints} which is real valued along $\Gamma'$. If $\{p_1,.., p_N\}$ are critical points of $\Phi$ in $M_0'\cup \partial M_0'$, we consider the set of antiholomorphic 1-forms $b\in W^{2,\infty}(M_0', T_{1,0}^*M_0')$satisfying 
\begin{eqnarray}
\label{class of b}
\iota_{\partial M_0}^*b \mid_{\Gamma'} \in \R,\ \ \ b(p_j) = 0\ \ \text{to }k\text{-th order for }j= 1,..N
\end{eqnarray}
For all such $b$, $\Phi$ and $\alpha_j$ satisfying \eqref{alpha equation} the ansatz given by 
\begin{eqnarray}
\label{u0 ansatz}
u_0 :=e^{\bar\Phi/h} he^{-i\bar\alpha_1}\frac{ b}{\bar\pl \bar\Phi}-e^{\Phi/h} he^{-i\alpha_1} \frac{ \bar b}{\pl \Phi}
\end{eqnarray}
vanishes along $\Gamma'$. Here we denote by $\frac{ b}{\bar\pl \bar\Phi}$ the unique function satisfying $ \frac{ b}{\bar\pl \bar\Phi}\bar\pl \bar\Phi = b$. Since $b$ vanishes to $k$-th order at all critical points of $\Phi$, this function is an element of $W^{2,\infty}(M_0')$.

Writing $L_{X,V} = (d+iX)^*(d+iX) +V$ as 
\begin{eqnarray}
\label{LXV in in dbar form}
L_{X,V} = -2i \star e^{-i\bar\alpha} \partial |e^{i\alpha}|^{-2} \bar\partial e^{i\alpha} +  Q = -2i\star e^{-i\alpha}\bar\partial |e^{i\alpha}|^2 \partial e^{i\bar\alpha} + \tilde Q
\end{eqnarray}
for some $Q, \tilde Q\in L^{\infty}(M_0')$, one sees that the ansatz $u_0$ satisfies
\[e^{-\phi/h}L_{X_1,V_1} u_0 =  O_{L^\infty}(h),\ \ \ u_0\mid_{\Gamma'} = 0.\]
To obtain a solution one then applies Corollary \ref{H1 solvability} to obtain $u_1$ solving $L_1 u_1=0$ of the form
\begin{eqnarray}
\label{u1}
u_1 = u_0 + e^{\phi/h} r,\ \ u_1\mid_{\Gamma'} = 0,\ \ \|r_1\| + \|hdr_1\| \leq Ch\sqrt{h}.
\end{eqnarray}
Using \eqref{LXV in in dbar form} again we can also directly show that
\[e^{\phi/h}L_{X,V} (e^{-\Phi/h}e^{-i\alpha} - e^{-\bar\Phi/h}e^{-i\bar\alpha}) = O_{L^\infty}(1),\ \ \  (e^{-\Phi/h}e^{-i\alpha} - e^{-\bar\Phi/h}e^{-i\bar\alpha} )\mid_{\Gamma'} = 0.\ \ \] 
Therefore, by applying Corollary \ref{H1 solvability} again we obtain solutions $u_2$ to $L_{X_2,V_2} u_2 = 0$ of the form
\begin{eqnarray}
\label{u2}
u_2 = (e^{-\Phi/h}e^{-i\alpha_2}- e^{-\bar\Phi/h}e^{-i\bar\alpha_2} ) + e^{-\phi/h} r_2,\ \ \ u_2\mid_{\Gamma'} =0,\ \ \ \|r_2\| \leq C\sqrt{h}.
\end{eqnarray}
Simple computation from expression \eqref{u1} yields that 
\[\partial u_1= -e^{\Phi/h} e^{-i\alpha_1}\bar b + e^{\phi/h}O_{L^2}(\sqrt{h})  ,\ \ \bar\partial u_1 = e^{\bar\Phi/h} e^{-i\bar\alpha_1} b + e^{\phi/h} O_{L^2}(\sqrt{h}).\]
Combining this with the expression \eqref{u2} and plug them into \eqref{boundary integral id in boundary holomorphic} we obtain
\[0 = \int_{M_0}  e^{i(\alpha_2-\alpha_1)}  (A_1 - A_2) \wedge\bar b - e^{i(\bar\alpha_2-\bar\alpha_1)}(\bar A_1 -\bar A_2)\wedge b   + o(1).\]
Using $\bar\partial e^{i\alpha_j} = i e^{i\alpha_j} A_j$, $\partial e^{-i\bar\alpha_j} = - ie^{-i\bar\alpha_j}\bar A_j$, and $\partial b =0$ we obtain in the limit $h\to 0$,
\[0 = \int_{M_0'} \partial (e^{i(\bar\alpha_2-\bar\alpha_1)} b ) -\bar\partial (e^{i(\alpha_2 - \alpha_1)}\bar b) = \int_{\partial M_0'} e^{i(\bar\alpha_2-\bar\alpha_1)} \iota^*_{\partial M_0} b  -  e^{i(\alpha_2 - \alpha_1)}\iota^*_{\partial M_0}\bar b.\]
The antiholomorphic 1-form $b$ satisfies the boundary condition given in \eqref{class of b} so that \[\iota_{\partial M_0'}^*b - \iota_{\partial M_0'}^*\bar b = 0\ \  {\rm on}\ \ \Gamma'\] and $\alpha_j\mid_{\Gamma'}\in \R$ on $\Gamma$ by \eqref{alpha equation}. Therefore the integrand in above boundary integral identity vanishes on $\Gamma$ to give
\begin{eqnarray}
\label{two parts of boundary holomorphic}
0=\int_{\partial M_0'\backslash\Gamma'} (\bar F_{A_2})^{-1}\bar F_{A_1}\iota^*_{\partial M_0'} b  -  F_{A_2} F_{A_1}^{-1} \iota^*_{\partial M_0'}\bar b.
\end{eqnarray} 
for all antiholomorphic 1-form $b$ satisfying \eqref{class of b}.

Note that since $\iota_{\partial M_0'}^*b \mid_{\Gamma'} \in \R$, the antiholomorphic 1-form $b$ on $M_0'$ extends to a conjugate even antiholomorphic 1-form $\eta$ on $ \dot M^D $. 
Expressed in the antiholomorphic 1-form $\eta$ and the function $F_{1,2}$ defined in \eqref{def of F12}, the integral in \eqref{two parts of boundary holomorphic} can be written as an integral along $S^1 = \partial \dot M^D$ to give
\[ 0=\int_{\partial M_0'\backslash\Gamma'}F_{1,2}\iota^*_{\partial M_0'} \eta  +\int_{R(\partial M_0' \backslash \Gamma')}  F_{1,2} \iota^*_{\partial M_0'}\eta = \int_{ \partial \dot M^D} F_{1,2} \iota^*_{\partial \dot M^D}\eta.\]
As $b$ vary over the space of antiholomorphic 1-forms on $M_0'$ satisfying \eqref{class of b}, its conjugate even extension $\eta$ vary over the space of all conjugate even antiholomorphic 1-forms on $\dot M^D$ vanishing at $\{p_1,..,p_N, R(p_1),.. R(p_N)\}$. Therefore, by Lemma \ref{perpendicular to all conjugate even}, the function $F_{1,2}\mid_{\partial \dot M^D}$ is the boundary value of an antimeromorphic function $\Psi$ on $\dot M^D$ with poles at \[\{p_1,..,p_N, R(p_1),.. R(p_N)\} \cap \dot M^D.\]

We would like to show that the antimeromorphic extension $\Psi$ is actually antiholomorphic by showing that all poles are removable. To this end construct by Lemma \ref{control the zero} a holomorphic function $\tilde \Phi$ on $M$ which is real valued along $\Gamma$ such that $p_1$ is not a critical point of $\tilde \Phi$. We can then use the perturbation argument of Lemma \ref{morsedense} to ensure that it is Morse. By applying the same argument with $\tilde \Phi$ in place of $\Phi$ we can assert that $F_{1,2}\mid_{\partial \dot M^D}$ extends to a antimeromorphic function $\tilde\Psi$ for which $p_1$ and $R(p_1)$ are not poles. By uniqueness $\Psi$ and $\tilde \Psi$ are identical since they have the same boundary value. Therefore we can conclude that $\Psi$ has a removable singularity at $p_1$ and $R(p_1)$. Applying the same argument for the other points we have that $\Psi$ is antiholomorphic.

\qed

\end{subsection}

\begin{subsection}{Proof of Proposition \ref{new boundary integral id}}
An immediate consequence of Proposition \ref{boundary holomorphic} is the new boundary integral identity of Proposition \ref{new boundary integral id} which is more convenient for recovering information about first-order coefficients. Let $F_{A_1}$ and $F_{A_2}$ be non-vanishing functions solving \eqref{conjugating factor with bc} and by Corollary \ref{same boundary condition for conjugation factor} we can choose them to satisfy $F_{A_1} = F_{A_2}$ on the line segment $\partial M_0\backslash \Gamma_0$ for some $\Gamma_0\subset\partial M_0$ containing $\Gamma$. Similarly, Corollary \ref{same boundary condition for conjugation factor} allows one to make the analogues choice for $F_{\bar A_j}$ solving $\partial F_{\bar A_j} = iF_{\bar A_j} \bar A_j$. 

For these choices of $F_{A_j}$ and $F_{\bar A_j}$ we consider solutions to the boundary value problem for systems
\begin{eqnarray}
\label{conjugated system}
\begin{pmatrix}
  0 & \bar\pl^* \\
  \bar\pl&0
 \end{pmatrix}\begin{pmatrix}\tilde u_j \\ \tilde\omega_j\end{pmatrix}
+ \begin{pmatrix}
  v_j & 0 \\
  0& v_j'
 \end{pmatrix}\begin{pmatrix}\tilde u_j \\ \tilde\omega_j\end{pmatrix} = 0, \ \ \ \ \tilde u_j\mid_{\Gamma_0} = 0
\end{eqnarray}
where $v_j = \frac{1}{2} |F_{\bar A_j}|^2 Q_j$, $v'_j =- |F_{A_j}|^2$, and $Q_j = \star dX_j + V_j$. Setting $u_j = F_{A_j}^{-1} \tilde u_j$ and $\omega_j = F_{\bar A_j}^{-1} \tilde\omega_j$, system (\ref{conjugated system}) is equivalent to $(u_j, \omega_j)$ solving the system
\begin{eqnarray}
\label{first order system}
\begin{pmatrix}
  0 & (\bar\pl + i A_j)^* \\
  \bar\pl + i A_j&0
 \end{pmatrix}\begin{pmatrix}\ u_j \\ \omega_j\end{pmatrix}
+ \begin{pmatrix}
  Q_j& 0 \\
  0& -1
 \end{pmatrix}\begin{pmatrix} u_j \\ \omega_j\end{pmatrix} = 0,\ \ \ u_j\mid_{\Gamma_0} = 0
\end{eqnarray}
and this holds if and only if $L_{X_j, V_j} u_j = 0$. Consequently, if a pair $(\tilde u_1 , \tilde\omega_1)$ solves (\ref{conjugated system}) with $\tilde u_1 \mid_{\Gamma_0} = 0$, then by the fact that $\cal{C}_{V_1, X_1, {\partial M_0\backslash \Gamma}} = \cal{C}_{V_2, X_2, {\partial M_0\backslash \Gamma}}$, there exists a $u_2$ solving $L_{X_2,V_2}u_2 =0$ such that $(u_1\mid_{\pl M}, (d + iX_1)u_1\mid_{{\partial M_0\backslash \Gamma}}) =(u_2\mid_{\pl M}, (d + iX_2)u_2\mid_{{\partial M_0\backslash \Gamma}})$. By equation (\ref{first order system}) this means that $(u_1\mid_{\pl M}, \omega_1\mid_{{\partial M_0\backslash \Gamma}}) =(u_2\mid_{\pl M}, \omega_2\mid_{{\partial M_0\backslash \Gamma}})$. As we have chosen $F_{A_j}$ and $F_{\bar A_j}$ so that $F_{A_1} = F_{A_2}$ and $F_{\bar A_1} = F_{\bar A_2}$ on ${\partial M_0\backslash \Gamma_0}$, we conclude that $(\tilde u_1\mid_{\pl M}, \tilde\omega_1\mid_{{\partial M_0\backslash \Gamma_0}}) =(\tilde u_2\mid_{\pl M}, \tilde\omega_2\mid_{{\partial M_0\backslash \Gamma_0}})$. We therefore conclude that the systems (\ref{conjugated system}) for $j=1,2$ has the same partial Cauchy data 
\[\{(\tilde u_j, \tilde \omega_j\mid_{\partial M_0\backslash \Gamma_0}) \mid  \supp (\tilde u_j)\subset \partial M_0\backslash\Gamma_0, \begin{pmatrix}
  0 & \bar\pl^* \\
  \bar\pl&0
 \end{pmatrix}\begin{pmatrix}\tilde u_j \\ \tilde\omega_j\end{pmatrix}
+ \begin{pmatrix}
  v_j & 0 \\
  0& v_j'
 \end{pmatrix}\begin{pmatrix}\tilde u_j \\ \tilde\omega_j\end{pmatrix} = 0
\}\]

Standard boundary integral identity for first order systems then yields that for any two sets of solutions $ \begin{pmatrix}\tilde u_j \\ \tilde\omega_j\end{pmatrix} $,
\[ \int_{M_0}\langle \begin{pmatrix}\tilde u_2 \\ \tilde\omega_2\end{pmatrix},\begin{pmatrix}
  v_1- v_2 & 0 \\
  0& v_1'-v_2'
 \end{pmatrix}  \begin{pmatrix}\tilde u_1 \\ \tilde\omega_1\end{pmatrix} \rangle= 0\]
 provided that $\tilde u_1$ and $\tilde u_2$ vanishes on $ \Gamma_0$. The boundary integral identity \eqref{new boundary integral equation} follows by definition of $v_j$ and $v_j'$. \qed

\end{subsection}

\end{section}


\begin{section}{Construction of CGO - Part I}
In this section we construction complex geometrics solving $L_{X,V} u = 0$ which vanish on $\Gamma_0\subset \partial M_0$. The solutions we construct here will be inserted into boundary integral identity \eqref{new boundary integral equation} to show that $|F_{A_1}| = |F_{A_2}|$.

Let $\Phi$ be a holomorphic Morse function on $M_0$ which is real valued on $\Gamma_0$. Suppose $\{p_0,..,p_N\}$ are the critical points of $\Phi$ in $\bar {M_0}$ with $p_0$ in the interior. We apply Lemma \ref{control the zero} to construct antiholomorphic 1-form $b$ on $M$ smooth up to the boundary such that $b(p_j)= 0$ to $k$-th order at $p_1,..,p_N$ and $b(p_0)\neq0$. Let $F_A\in W^{2,p}(M_0) \cap W^{4,p}_{loc}(M_0)$ be a non-vanishing function for large $p\in (1,\infty)$ satisfying $\bar\partial F_A = iA F_A$ and $|F_A| = 1$ on $\Gamma_0$. We choose a smooth cut-off $\chi \in C^\infty_0(M_0)$ supported in a small neighbourhood of $p_0$ and define 
\begin{eqnarray}
\label{def of u0 and u0'}
u_{0}' := F_A^{-1}  e^{\Phi/h}\bar\pl^{-1} e^{-2i\psi/h}\chi |F_A|^2 b  + h (1-\chi)|\bar F_A  e^{\bar\Phi/h} \frac{b}{\bar\pl \bar\Phi}\\\nonumber
\end{eqnarray}
where $\bar\partial^{-1} : C^\infty_0(\supp(\chi)) \to C^\infty(M)$ is the operator constructed in Proposition \ref{surject}. Using Lemma \ref{estimate1} and direct computation gives 
\begin{eqnarray}
\label{LXV of u0'}
\|e^{-\phi/h} u_0'\| \leq C h^{\frac{1}{2}+\epsilon},\ \ \ e^{-\phi/h}L_{X,V}( u_{0}') = O_{L^2}(h^{\frac{1}{2}+ \epsilon}).
\end{eqnarray}
We now compute the boundary value of $u_0'$ along $\Gamma_0$.
\begin{lemma}
\label{bc of ansatz}
  The boundary value for the ansatz $u_0$ in \eqref{def of u0 and u0'} has the boundary condition
\[u_0' \mid_{\Gamma_0} =F_A^{-1}e^{\Phi/h}( hf_0  + h e^{-2i\psi(p_0)/h} f_1 + h^2  f_h) \]
for some $f_0$ and $f_1$ in $C^\infty(\partial M_0)$ independent of $h$ and $f_h$ satisfies $\|f_h\|_{C^k(\partial M_0)} \leq C$.
\end{lemma}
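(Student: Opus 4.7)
\medskip
\noindent\textbf{Proof proposal.} My plan is to restrict the two summands of $u_0'$ to $\Gamma_0$ using the boundary normalizations $|F_A|_{\Gamma_0} = 1$ (hence $\bar F_A = F_A^{-1}$ there) and $\Phi|_{\Gamma_0}\in\R$ (hence $\bar\Phi = \Phi$ and the harmonic conjugate $\psi = \im\Phi$ vanishes on $\Gamma_0$), and then factor out the common prefactor $F_A^{-1}e^{\Phi/h}$ from both contributions.

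For the second summand, the restriction to $\Gamma_0$ becomes
\[
h(1-\chi)\bar F_A\, e^{\bar\Phi/h}\frac{b}{\bar\pl\bar\Phi}\Big|_{\Gamma_0}
\;=\; F_A^{-1}e^{\Phi/h}\cdot h(1-\chi)\frac{b}{\bar\pl\bar\Phi}\Big|_{\Gamma_0},
\]
contributing the term $hf_0$ with $f_0:=(1-\chi)b/\bar\pl\bar\Phi\,|_{\partial M_0}$, which is smooth and independent of $h$.

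The interesting contribution comes from the first summand, which reduces to evaluating $\bar\pl^{-1}(e^{-2i\psi/h}\chi|F_A|^2 b)$ on $\Gamma_0$. Here I invoke Proposition \ref{surject}(ii): since $\chi$ is supported in a small holomorphic chart around the interior point $p_0$, the operator decomposes as $\bar\pl^{-1}\chi = \chi'\bar T\chi + K$, with $\chi'$ also compactly supported near $p_0$ and $K$ having a smooth kernel on $M\times M$. Because $p_0$ lies in the interior of $M_0$, both $\chi$ and $\chi'$ can be chosen supported away from $\Gamma_0$, so the $\chi'\bar T$ part contributes nothing on $\Gamma_0$ and only the smoothing piece $K(e^{-2i\psi/h}\chi|F_A|^2 b)\big|_{\Gamma_0}$ survives.

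Evaluating this smoothing piece amounts to the oscillatory integral
\[
\int_M K(x,y)\, e^{-2i\psi(y)/h}\,\chi(y)|F_A(y)|^2 b(y)\, dv_g(y), \qquad x\in\Gamma_0.
\]
Since $\Phi=\phi+i\psi$ is a holomorphic Morse function, the Cauchy-Riemann equations guarantee that $\psi$ is a real Morse function with the same critical set; in particular $p_0$ is a non-degenerate critical point of $\psi$ in the interior of $\supp\chi$, and by choosing $\supp\chi$ small we may assume it is the only critical point inside. The standard two-dimensional stationary phase expansion then produces
\[
h\, e^{-2i\psi(p_0)/h} f_1(x) + h^2 \tilde f_h(x),
\]
where $f_1\in C^\infty(\partial M_0)$ is explicit in terms of the values of $K(x,p_0)$, $|F_A(p_0)|^2$ and $b(p_0)$ (the latter non-vanishing by construction), and $\tilde f_h$ is uniformly bounded in $C^k(\partial M_0)$ for any fixed $k$. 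Since $K(x,y)$ is jointly smooth, differentiation in $x$ commutes with the expansion in $h$, giving the $C^k$ bound for the remainder. Assembling the two summands and factoring out $F_A^{-1}e^{\Phi/h}$ yields the claimed boundary expression.

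The only delicate ingredient is the uniform-in-$x$ stationary phase remainder control, which is standard once the non-degeneracy of the critical point of $\psi$ at $p_0$ is noted; all the rest is bookkeeping using $|F_A|_{\Gamma_0}=1$ and $\psi|_{\Gamma_0}=0$.
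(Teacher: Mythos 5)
Your plan reproduces the paper's route: restrict both summands of $u_0'$ to $\Gamma_0$ using $|F_A|_{\Gamma_0}=1$ and $\psi|_{\Gamma_0}=0$, factor out $F_A^{-1}e^{\Phi/h}$, read off $hf_0$ from the second summand, reduce the first summand on $\Gamma_0$ to the smoothing piece $K(e^{-2i\psi/h}\chi|F_A|^2b)$ via Proposition~\ref{surject}(ii) (the local singular part $\chi'\bar T\chi$ vanishes there since $\chi'$ can be chosen supported near $p_0$), and then apply the Morse lemma and stationary phase at $p_0$. All of that is correct and matches what is in the text.

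The gap is in your last step, where you call the remainder estimate ``standard'' and reduce it to ``bookkeeping.'' The amplitude in the oscillatory integral is $K(\tilde z,z)\chi(z)|F_A(z)|^2b(z)$, and $|F_A|^2$ is \emph{not} smooth: with $X\in W^{3,p}(M_0)$, one only gets $F_A\in W^{4,p}_{\rm loc}(M_0)$, so $|F_A|^2$ lies in $W^{4,p}_{\rm loc}$, which is $C^{3,\alpha}$ near $p_0$ but nothing better. The textbook stationary phase remainder estimate in two dimensions (say Zworski, Thm.~3.16) gives the $O(h^{1+1})$ remainder with a constant involving $\sup_{|\alpha|\le 2N+n+1}|\partial^\alpha a|$, which for $N=1$, $n=2$ demands $C^5$ control of the amplitude --- more than is available. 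So invoking the standard theorem for a smooth amplitude does not close the argument, and the uniform $C^k(\partial M_0)$ bound on $f_h$ is precisely the non-trivial point. The paper handles this by not using the off-the-shelf theorem: after the Morse lemma it writes the exact first-order Taylor remainder of the quadratic propagator as a Fourier integral
\[
J(h,\tilde K(\cdot,\tilde z))=\int e^{ih\langle\xi,Q^{-1}\xi\rangle}\langle\xi,Q^{-1}\xi\rangle\,\mathcal F_w\big(\tilde K(w,\tilde z)\chi(w)|F_A(w)|^2\big)(\xi)\,d\xi\wedge d\bar\xi,
\]
factors $\langle\xi,Q^{-1}\xi\rangle=\langle\xi\rangle^{-1}a_3(\xi)$ with $a_3$ of order $3$, and controls the result by H\"older against $\|\langle\xi\rangle^{-1}\|_{L^p}$ and a third-order derivative of the amplitude in $L^p_w$. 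This is exactly calibrated to the $W^{3,p}$ (hence $W^{4,p}_{\rm loc}$) regularity one actually has, and the uniformity in $\tilde z$ then does come from the smoothness of $K(\tilde z,\cdot)$, as you say. You would need to either carry out this Fourier-side estimate, or invoke a sharp stationary phase remainder theorem that is valid for amplitudes of only finite Sobolev regularity, to fill the gap.
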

\noindent{\bf Proof.} Along the subset $\Gamma_0 \subset \partial M_0$ we have that $F_A^{-1}\mid_{\Gamma_0} = \overline{F_A}\mid_{\Gamma_0}$ and $\Phi\mid_{\Gamma_0} \in \R$. Therefore, along $\Gamma_0$ the ansatz $u_0'$ has the expression
\[u_0'\mid_{\Gamma_0} = F_A^{-1}  e^{\Phi/h}(\bar\pl^{-1} e^{-2i\psi/h}\chi |F_A|^2 b  + h  \frac{b}{\bar\pl \bar\Phi})\mid_{\Gamma_0}.\]
The boundary value along $\Gamma_0$ of the second term of $u_0'$ can be written down directly. 
 For the first term in \eqref{def of u0 and u0'}, let $\chi'$ be a smooth function on $M_0$ whose support is disjoint from that of $\chi$ and $\chi'=1$ in a neighbourhood of $\partial M_0$. By Proposition \ref{surject} we have that $\chi'\bar\partial^{-1} \chi$ is an operator with smooth kernel. Therefore in a coordinate system which identifies $p_0$ with the origin, $\chi'\bar\partial^{-1} \chi |F_A|^2 b$ has the following expression for some smooth compactly supported $K : \D\times \D \to \C:$
\[(\chi'\bar\partial^{-1} \chi |F_A|^2 b)(\tilde z) =  \int_{\D} e^{-2i\psi(z)/h}\chi(z) K(z,\tilde z) |F_A(z)|^2 dz\wedge d\bar z.\]
We may assume that the support of $\chi$ is chosen to be so small such that we can apply Morse Lemma we obtain a change of variable $ w = \gamma(z)$ with $\gamma(0) = 0$ such that
\begin{eqnarray*}(\chi'\bar\partial^{-1} \chi |F_A|^2 b)(\tilde z)  &=& e^{-2i\psi(p_0)/h} \int_\D e^{2i \langle w, Q w\rangle/h} \chi(w)  \tilde K( w, \tilde z)|F_A(w)|^2 dw\wedge d\bar w
\end{eqnarray*}
for some diagonal matrix $Q$ with entries $\pm 1$ on the diagonal.

With this quadratic phase we can compute explicitly both the principal and the remainder term in the stationary phase expansion. That is,
{\small \begin{eqnarray}
\label{f12}
(\chi'\bar\partial^{-1} \chi |F_A|^2 b)(\tilde z)  
= he^{-2i\psi(p_0)/h} \tilde K(0,\tilde z) +  h^2\int_0^1 (1-t) J(th,  \tilde K(\cdot,\tilde z)) dt
\end{eqnarray}}
where 
\[J(h,  \tilde K(\cdot,\tilde z)) = \int e^{ih\langle \xi, Q^{-1}\xi \rangle}\langle \xi, Q^{-1}\xi \rangle \cal{F}_{w} (\tilde K(w, \tilde z)\chi(w)|F_A(w)|^2)(\xi)d\xi\wedge d\bar\xi\]
with $\cal{F}_{w}$ denoting the classical Fourier transform with respect to the variable $w$. We claim that $J(h,\tilde K(\cdot,\tilde z))$, is a smooth function in $\tilde z$ whose $C^k(M_0)$ norm is bounded independently of $h>0$. Indeed, for any multi-index $\beta$ standard oscillatory integral arguments give
\begin{eqnarray*}D_{\tilde z}^\beta J(h,  \tilde K(\cdot,\tilde z)) &=& \int e^{ih\langle \xi, Q^{-1}\xi \rangle}\langle \xi, Q^{-1}\xi \rangle \cal{F}_{w} (D_{\tilde z}^\beta\tilde K(w, \tilde z)\chi(w)|F_A(w)|^2)(\xi)d\xi\wedge d\bar\xi\\
&=&\int e^{ih\langle \xi, Q^{-1}\xi \rangle} \langle \xi\rangle^{-1}\cal{F}_{w} (a_3(D_w) (D_{\tilde z}^\beta\tilde K(w, \tilde z)\chi(w)|F_A(w)|^2))(\xi)d\xi\wedge d\bar\xi
\end{eqnarray*}
for some constant coefficient third order pseudodifferential operator $a_3$ in the variable $w$. Using the fact that $D_{\tilde z}^\beta\tilde K(w, \tilde z)\chi(w)|F_A(w)|^2$ is a compactly supported $W^{3,p}$ function in $w$ for all $p\in [1,\infty)$ we can estimate the right side by using Holder's inequality
\begin{eqnarray*}|D_{\tilde z}^\beta J(h,  \tilde K(\cdot,\tilde z))| \leq \|\langle \xi\rangle^{-1}\|_{L^p} \|a_3(D_w) D_{\tilde z}^\beta\tilde K(w, \tilde z)\chi(w)|F_A(w)|^2\|_{L^p_w}\end{eqnarray*}
The fact that $K(w,\tilde z)$ is smooth and compactly supported in both variables gives the desired uniform estimate in $\tilde z$.

Plugging this estimate into \eqref{f12} we conclude that 
\[F_A^{-1}\bar\partial^{-1} \chi |F_A|^2 b \mid_{\partial M_0}= F_A^{-1}(he^{-2i\psi(p_0)/h} f_1 + h^2 f_h)\]
where $f_1\in C^\infty(\partial M_0)$ and $\|f_h\|_{C^k(\partial M_0)} \leq C$ for all $h>0$. This completes the proof.\qed\\
Note that since $F_A^{-1} = \bar F_A$ on $\Gamma_0$, we can apply Corollary \ref{surject} and construct holomorphic functions $a_0$, $a_1$, $a_h$, and antiholomorphic functions $\tilde a_0$, $\tilde a_1$, $\tilde a_h$ such that 
\[ F_A^{-1} a_j + \bar F_A \tilde a_j = F_A^{-1} f_j \ \ \text{on}\ \ \Gamma_0, \ \ j=1,2,h.\]
Furthermore, as all the $C^k$ norm of $f_j$ are bounded, we apply the estimates in Corollary \ref{surject} to get that $ \|a_j\|_{C^k(M_0)}\ + \|\tilde a_j \|_{C^k(M_0)} \leq C$ independent of $h>0$. Therefore by \eqref{def of u0 and u0'} and \eqref{LXV of u0'} we have that the ansatz
{\small
\begin{eqnarray}
\label{def of u0''}
u_0'' := u_0' - h\big(e^{\Phi/h}F_A^{-1} (a_0 + e^{-2i\psi(p_0)/h} a_1+ ha_h) + e^{\bar\Phi/h}\bar F_A (\tilde a_0 + e^{-2i\psi(p_0)/h} \tilde a_1+ h \tilde a_h)\big) 
\end{eqnarray}
}
with $u_0'$ given by \eqref{def of u0 and u0'} satisfies 
\begin{eqnarray}
\label{LXV of u0''}
\|e^{-\phi/h} u_0''\| \leq C h^{\frac{1}{2}+\epsilon},\ \ \ e^{-\phi/h}L_{X,V}u_0'' = O_{L^2}(h^{\frac{1}{2} + \epsilon})\ \ {\rm in}\ \ M_0,\ \ \ u_0''\mid_{\Gamma_0} = 0.
\end{eqnarray}

Extend the $O_{L^2}(h^{\frac{1}{2}+\epsilon})$ remainder on the right side trivially to $M_0'$ and applying Corollary \ref{H1 solvability} with we arrive at the following 
\begin{prop}
\label{cgo for interior recover}
There exists solutions to $L_{X,V} u = 0$ in $M_0$ of the form
\[u = u_0'' + e^{\phi/h} r,\ \ u\mid_{\Gamma_0} =0,\ \ \ \|r\|+ \|hdr\| \leq Ch^{1 +\epsilon},\ \ \ \|e^{-\phi/h} u\| \leq C h^{\frac{1}{2}+\epsilon}\]
where $u_0''$ be the ansatz given by \eqref{def of u0''}. 
\end{prop}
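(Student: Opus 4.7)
The plan is to produce the desired CGO by using the approximate solution $u_0''$ as a starting point and correcting it via the semiclassical solvability established in Corollary \ref{H1 solvability}. Writing $u=u_0''+e^{\Phi/h}r$, the equation $L_{X,V}u=0$ is equivalent to
\[
e^{-\Phi/h}L_{X,V}e^{\Phi/h}r \;=\; -\,e^{-\Phi/h}L_{X,V}u_0'',
\]
while the boundary condition $u|_{\Gamma_0}=0$ reduces, thanks to the vanishing of $u_0''$ on $\Gamma_0$ stated in \eqref{LXV of u0''}, to asking for $r|_{\Gamma_0}=0$. So everything boils down to finding a solution $r$ of the above equation with $L^2$ and $H^1_{scl}$ size controlled by $\sqrt{h}$ times the size of the right-hand side.

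The right-hand side is, by the second bound in \eqref{LXV of u0''}, an $L^2$ function on $M_0$ of norm $O(h^{1/2+\epsilon})$. I would extend it by zero from $M_0$ to the enlarged surface $M_0'$ introduced in Section \ref{sec:double} (this extension is still $O_{L^2}(h^{1/2+\epsilon})$), and then invoke Corollary \ref{H1 solvability} with Carleman weight $\varphi:=\operatorname{Re}\Phi$. The hypotheses on $\varphi$ are satisfied: $\Phi$ was chosen holomorphic and Morse on $M$ and real-valued on $\Gamma_0$, so $\operatorname{Im}\Phi$ vanishes on $\Gamma_0$, which forces $\partial_\nu \varphi|_{\Gamma_0'}=0$ on an open neighbourhood $\Gamma_0'\supset \Gamma$ as required. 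Corollary \ref{H1 solvability} then delivers $r\in H_0^1(M)$ with $r|_{\Gamma'}=0$ satisfying
\[
\|r\|+\|h\,dr\|\;\leq\; C\sqrt{h}\cdot h^{1/2+\epsilon}\;=\;C h^{1+\epsilon}.
\]
Since $\Gamma_0\subset \Gamma'$ (by the construction of $\Gamma_0$ in Corollary \ref{same boundary condition for conjugation factor}), the condition $r|_{\Gamma'}=0$ implies $r|_{\Gamma_0}=0$, giving the desired vanishing on $\Gamma_0$ for $u$.

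Finally, the $L^2$ bound on $e^{-\Phi/h}u$ follows at once from the triangle inequality:
\[
\|e^{-\Phi/h}u\|\;\leq\;\|e^{-\Phi/h}u_0''\|+\|r\|\;\leq\;C h^{1/2+\epsilon}+C h^{1+\epsilon}\;\leq\;C h^{1/2+\epsilon},
\]
using the first estimate of \eqref{LXV of u0''}. There is no serious obstacle in this argument; it is essentially a bookkeeping application of the machinery set up earlier, and the only points that deserve care are verifying that $\varphi=\operatorname{Re}\Phi$ meets the Morse and boundary hypotheses of Corollary \ref{H1 solvability} and that $\Gamma_0\subset \Gamma'$, so that the vanishing traces provided by the solvability result imply the vanishing trace required on $\Gamma_0$.
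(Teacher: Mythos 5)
Your argument is correct and reproduces the paper's own proof, which is given in a single sentence just before the statement: extend the $O_{L^2}(h^{1/2+\epsilon})$ error trivially to $M_0'$, apply Corollary \ref{H1 solvability} with the real weight $\varphi=\phi=\operatorname{Re}\Phi$, and conclude via $\Gamma_0\subset\Gamma'$ (from Corollary \ref{same boundary condition for conjugation factor}) and the triangle inequality. The only cosmetic slip is writing $e^{\Phi/h}r$ where both the statement and Corollary \ref{H1 solvability} use the real exponential $e^{\phi/h}r$; since $|e^{\Phi/h}|=e^{\phi/h}$ this does not affect any $L^2$ estimate.
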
 
Direct computation gives the following Lemma
\begin{lemma}
\label{dbar of solution}

Let $u$ be the solution to $L_{X,V} u = 0$ constructed in Proposition \ref{cgo for interior recover}. We then have that 
{\Small
\begin{eqnarray}
\bar\partial F_A u = e^{\bar\Phi/h} |F_A|^2 b- h \bar\partial \big(e^{\bar\Phi/h}|F_A|^2 (\tilde a_0 + e^{-2i\psi(p_0)/h} \tilde a_1+ h \tilde a_h) \big)+ h e^{\bar\Phi/h} R_0 + \bar\partial (e^{\phi/h} F_A r)
\end{eqnarray}}
for some $R_0 \in L^\infty(M_0)$ and $r$ satisfying the estimate  $\|r\|+ \|hdr\| \leq Ch^{1 +\epsilon}$. 
\end{lemma}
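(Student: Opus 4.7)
The plan is to carry out a direct product-rule computation of $\bar\partial(F_A u)$ using the explicit decomposition $u = u_0'' + e^{\phi/h} r$ from Proposition~\ref{cgo for interior recover}, together with the explicit form of $u_0''$ given in \eqref{def of u0''} and of $u_0'$ given in \eqref{def of u0 and u0'}. First I would separate off the remainder term at once, writing $\bar\partial(F_A u) = \bar\partial(F_A u_0'') + \bar\partial(e^{\phi/h} F_A r)$; the second summand is precisely the $\bar\partial(e^{\phi/h} F_A r)$ term appearing in the claimed formula, so no further work is needed there. All remaining work is to compute $\bar\partial(F_A u_0'')$.

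The core step is the computation of $\bar\partial(F_A u_0')$. After multiplying through by $F_A$ and using $F_A \bar F_A = |F_A|^2$, one has $F_A u_0' = e^{\Phi/h}\bar\partial^{-1}(e^{-2i\psi/h}\chi|F_A|^2 b) + h(1-\chi)|F_A|^2 e^{\bar\Phi/h}\frac{b}{\bar\partial\bar\Phi}$. For the first summand I use that $e^{\Phi/h}$ is holomorphic so $\bar\partial$ commutes through it, and then $\bar\partial\bar\partial^{-1} = \mathrm{id}$ combined with $e^{\Phi/h}e^{-2i\psi/h} = e^{\bar\Phi/h}$ produces $e^{\bar\Phi/h}\chi|F_A|^2 b$. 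For the second summand I use the two identities $\bar\partial e^{\bar\Phi/h} = h^{-1}(\bar\partial\bar\Phi) e^{\bar\Phi/h}$ and $\frac{b}{\bar\partial\bar\Phi}\cdot\bar\partial\bar\Phi = b$ (by the definition of the quotient), which cancels the factor of $h$ in front and gives $(1-\chi)|F_A|^2 e^{\bar\Phi/h} b$ plus a remainder $h e^{\bar\Phi/h} R_0$, where $R_0 := \bar\partial\bigl[(1-\chi)|F_A|^2 \tfrac{b}{\bar\partial\bar\Phi}\bigr]$. Adding the two, the telescoping $\chi + (1-\chi) = 1$ yields $\bar\partial(F_A u_0') = e^{\bar\Phi/h}|F_A|^2 b + h e^{\bar\Phi/h} R_0$.

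Next I would treat the correction pieces in $u_0'' - u_0'$. The terms of the form $e^{\Phi/h} F_A^{-1}(a_0 + e^{-2i\psi(p_0)/h} a_1 + h a_h)$ become $e^{\Phi/h}(a_0 + e^{-2i\psi(p_0)/h} a_1 + h a_h)$ after multiplication by $F_A$; since $e^{\Phi/h}$ and each $a_j$ are holomorphic (and $e^{-2i\psi(p_0)/h}$ is a constant in the spatial variable), $\bar\partial$ annihilates this block. The antiholomorphic-type corrections $e^{\bar\Phi/h}\bar F_A(\tilde a_0 + e^{-2i\psi(p_0)/h}\tilde a_1 + h\tilde a_h)$ become $|F_A|^2 e^{\bar\Phi/h}(\tilde a_0 + e^{-2i\psi(p_0)/h}\tilde a_1 + h\tilde a_h)$ after multiplication by $F_A$, and the prefactor $-h$ produces exactly the term $-h\,\bar\partial\bigl(e^{\bar\Phi/h}|F_A|^2(\tilde a_0 + e^{-2i\psi(p_0)/h}\tilde a_1 + h\tilde a_h)\bigr)$ appearing in the claim.

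Assembling the three contributions yields the stated identity. I do not expect any significant obstacle: the calculation is a routine application of the product rule once the telescoping $\chi + (1-\chi) = 1$ is exploited. The only point requiring care is verifying that $R_0$ is indeed in $L^\infty(M_0)$, which uses that $\chi \equiv 1$ near $p_0$ (so $(1-\chi)$ removes the only singularity of $\frac{b}{\bar\partial\bar\Phi}$), that $b$ vanishes to order $k$ at $p_1,\ldots,p_N$ (so $\frac{b}{\bar\partial\bar\Phi}$ is smooth across the remaining critical points since $\bar\partial\bar\Phi$ has simple zeros there by the Morse assumption), and that $F_A \in W^{2,p}_{\mathrm{loc}}$, which is enough regularity to take one $\bar\partial$ derivative.
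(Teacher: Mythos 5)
Your proposal is correct and is exactly the direct product-rule computation the paper has in mind (it simply states ``Direct computation gives the following Lemma'' with no details). The key mechanisms you identify — $\bar\partial\bar\partial^{-1}=\mathrm{Id}$ and $e^{\Phi/h}e^{-2i\psi/h}=e^{\bar\Phi/h}$ for the first term, the cancellation $h\cdot h^{-1}(\bar\partial\bar\Phi)\cdot\frac{b}{\bar\partial\bar\Phi}=b$ followed by $\chi+(1-\chi)=1$ for the principal contribution, annihilation of the holomorphic $a_j$-block, and the regularity of $R_0=\bar\partial\bigl[(1-\chi)|F_A|^2\frac{b}{\bar\partial\bar\Phi}\bigr]$ via the vanishing of $b$ at $p_1,\dots,p_N$, the cutoff near $p_0$, and $F_A\in W^{2,p}$ — are precisely what makes the formula hold.
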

\end{section}
\begin{section}{Construction of CGO - Part II}
In this section we construct complex geometric optics to recover the zeroth order term of the operator $L_{X_j, V_j}$. The presentation here is essentially a repeat of \cite{duke} and we only include it here for completeness and convenience of the reader. Let $p_0\in {\rm int}(M_0)$ be the critical point of a Morse holomorphic function $\Phi=\phi+i\psi$ on $M$ which is purely real on $\Gamma_0$. By Proposition \ref{criticalpoints} such points form a dense subset of $M$. Given such a holomorphic function, the purpose of this section is to construct, for $X\in W^{3,p}(M_0)$ and $V\in W^{2,p}(M_0)$, solutions $u$ on $M_0$ of $((d+iX)^*(d+iX) +V)u = 0$ of the form
\begin{equation}
\label{cgo}
u = \big( e^{\Phi/h}(F_A^{-1}a + \bar F_Ar_1) + e^{\bar\Phi/h}(\bar F_A\bar a +  F_A^{-1}r_1') + h e^{\Phi/h}F_A^{-1}a_0 + h e^{\bar\Phi/h} \bar F_A \tilde{a}_0\big) + e^{\phi/h} r_2
\end{equation}
with $u\mid_{\Gamma_0} = 0$ for $h>0$ small, where $a$ is holomorphic and $F_A\in W^{4,p}(M_0)$ is a non-vanishing function solving $\bar\partial F_A = iA F_A$, $\bar a_0,\tilde a_0\in H^{2}(M_0)$ are antiholomorphic, moreover $a(p_0)\not=0$ and $a$ vanishes to high order at all other critical points $ p' \in M_0$ of $\Phi$. Furthermore, we ask that the holomorphic function $a$ is purely imaginary on $\Gamma_0$. The existence of such a holomorphic function is a consequence of Lemma \ref{control the zero}. 

The remainder terms $r_1,r_1',r_2$ will be controlled as $h\to 0$ and have particular properties near the critical points of $\Phi$.
More precisely, $r_2$ will be a $O_{L^2}(h^{3/2}|\log h|)$ and $r_1,r_1'$ will be of the form $ h\til{r}_{12} + o_{L^2}(h)$ and $h\til{r}_{12}' + o_{L^2}(h)$ respectively where $\til{r}_{12}, \til{r}_{12}'$ are independent of $h$, 
which can be used to obtain sufficient informations from the stationary phase method in the identification process.

\begin{subsection}{Construction of $r_1$}\label{constr1}
We shall  construct $r_1$ to satisfy
\[e^{-\Phi/h}((d+ iX)^*(d+iX) +V)e^{\Phi/h}(F_A^{-1}a + \bar F_Ar_1) = O_{L^2}(h|\log h|)\]
and $r_1 = r_{11} + h r_{12}$.
Using \eqref{LXV in in dbar form} we can write, for some $Q, \tilde Q \in W^{2,p}(M_0)$
\[L_{X,V} = -2i \star \bar F_A \partial |F_A|^{-2} \bar\partial F_A +  Q = -2i\star F_A^{-1}\bar\partial |F_A|^2 \partial \bar F_A^{-1} + \tilde Q\]
where $A = \pi_{0,1} X$ and $F_A \in W^{4,p}(M_0)$ is a non-vanishing function solving $\bar\partial F_A = iAF_A$ and unitary along $\Gamma_0$. Such functions are given by Proposition \ref{new boundary integral id}.

We let $G$ be the Green operator of the Laplacian on the smooth surface with boundary 
$M_0$ with Dirichlet condition, so that $\Delta_gG={\rm Id}$ on $L^2(M_0)$. In particular this implies 
that $\bar{\partial}\partial G=\frac{i}{2}\star^{-1}$ where $\star^{-1}$ is the inverse of $\star$ mapping functions to $2$-forms.
We will search for $r_{1}\in H^{2}(M_0)$ satisfying
$||r_1||_{L^2}=O(h)$ and
\begin{equation}
\label{dequation}
e^{-2i\psi/h}|F_A|^2\partial e^{2i\psi/h} r_1 = -\pl G (aQ) + \omega + O_{H^1}(h|\log h|)
\end{equation}
where $\omega$ is a smooth holomorphic 1-form on $M_0$. 
Indeed, using the fact that $\Phi$ is holomorphic we have
\[e^{-\Phi/h}L_{X,V}e^{\Phi /h}=-2i\star F_A^{-1}\bar\partial e^{-\Phi/h} |F_A|^2 \partial \bar F_A^{-1}e^{\Phi/h} + Q=-2i\star F_A^{-1}\bar\partial e^{-2i\psi/h} |F_A|^2 \partial \bar F_A^{-1}e^{2i\psi/h} + \tilde Q\]
for some $Q, \tilde Q \in W^{2,p}(M_0)$. Applying $-2i\star\bar{\pl}$ to \eqref{dequation}, we obtain (note that $\pl G(aQ)\in C^{2,\alpha}(M_0)$ by elliptic regularity)
\[e^{-\Phi/h}L_{X,V}e^{\Phi/h}\bar F_Ar_1=-aQ+O_{L^2}(h|\log h|).\]
We will choose $\omega$ to be a smooth holomorphic $1$-form on $M_0$ such that at all 
critical point $p'$ of $\Phi$ in $M_0$, the form $\beta:=\pl G(aQ) - \omega$  with value in $T^*_{1,0}M_0$ vanish to the highest possible order.
Writing $\beta=\beta(z)dz$  in local complex coordinates, $\beta(z)$ is $C^{2+\alpha}$ by elliptic regularity and 
we have $-2i\pl_{\bar{z}}\beta(z)=aV$, therefore $\pl_z\pl_{\bar{z}}\beta(p')=\pl^2_{\bar{z}}\beta(p')=0$ at each critical point $p'\not=p_0$ by construction 
of the function $a$.  Therefore, we deduce that at each critical point $p'\neq p_0$, $\pl G(aQ)$ has Taylor series expansion $\sum_{j = 0}^2 c_j z^j + O(|z|^{2+\alpha})$. That is, all the lower order terms of the Taylor expansion of $\pl G(aQ)$ around $p'$ are polynomials of $z$ only. 
\begin{lemma}\label{formomega}
Let $\{p_0,...,p_N\}$ be finitely many points on $M_0$ and let $\theta$ be a $C^{2,\alpha}$ section of $T^*_{1,0}M_0$.
Then there exists a $C^k$ holomorphic function $f$ on $M_0$ with $k\in\nn$ large, such that $f$ vanishes to high order at the points $\{p_1,...,p_N\}$ and $\omega = \partial f$ satisfies the following: in complex local coordinates $z$ near 
$p_0$ , one has $\pl_z^\ell\theta(p_0)=\pl_z^\ell\omega(p_0)$ for $\ell=0,1,2$, where $\theta=\theta(z)dz$ and $\omega=\omega(z)dz$.
\end{lemma}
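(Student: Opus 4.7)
The plan is to reduce Lemma \ref{formomega} to a direct application of Lemma \ref{control the zero}. In a complex coordinate $z$ near $p_0=\{z=0\}$, write $\theta=\theta(z)\,dz$ and suppose $f$ has Taylor expansion $f(z)=c_0+c_1z+c_2z^2+c_3z^3+O(|z|^4)$. Then $\omega=\pl f=\bigl(c_1+2c_2z+3c_3z^2+O(|z|^3)\bigr)dz$, so the requirement $\pl_z^\ell\omega(p_0)=\pl_z^\ell\theta(p_0)$ for $\ell=0,1,2$ is equivalent to the linear prescription
\[
c_1=\theta(p_0),\qquad c_2=\demi\pl_z\theta(p_0),\qquad c_3=\tfrac{1}{6}\pl_z^2\theta(p_0),
\]
with $c_0$ free (take $c_0=0$). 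Thus the lemma amounts to producing a holomorphic $f$ on $M_0$ with this prescribed $3$-jet at $p_0$ and with high-order vanishing at $p_1,\dots,p_N$.

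I would now invoke Lemma \ref{control the zero}, applied on (a slight enlargement of) $M_0$ with the set $\{p_0,p_1,\dots,p_N\}$, with parameter $K=3$ and coefficients $(c_0,c_1,c_2,c_3)$ as above, and with vanishing order $N'$ at $p_1,\dots,p_N$ chosen at least as large as whatever order is needed downstream (at minimum $N'\geq 1$ so that $\omega=\pl f$ already vanishes to positive order at each $p_j$; more conservatively one picks $N'$ large so that $\omega$ vanishes to the order actually required in the construction of $\beta$ in Section \ref{constr1}). Restricting to $M_0$ gives a holomorphic $f$ on $M_0$ whose Taylor expansion at $p_0$ matches the prescription, and $\omega:=\pl f$ is automatically a holomorphic $1$-form with $\pl_z^\ell\omega(p_0)=\pl_z^\ell\theta(p_0)$ for $\ell=0,1,2$ by the computation in the first paragraph.

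For the $C^k$ regularity up to the boundary, I would recall that the construction in the proof of Lemma \ref{control the zero} produces $f$ as $u(R+r_K)$, where $u\in\ker D_F$ is smooth by elliptic regularity of the Cauchy--Riemann operator with totally real boundary condition, $r_K$ is an explicit meromorphic cut-off, and $R$ solves $\bar\pl R=-\omega_K$ with $\omega_K$ smooth and compactly supported in $M_0\setminus\{p_0\}$. Applying Proposition \ref{surject}(i) in the scale $W^{\ell,q}$ for $\ell$ arbitrarily large and using the Sobolev embedding $W^{\ell,q}(M_0)\hookrightarrow C^k(\overline{M_0})$ yields $R\in C^k(\overline{M_0})$, hence $f\in C^k(\overline{M_0})$ for any prescribed $k$.

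I do not anticipate any genuine analytic obstacle: the entire content of Lemma \ref{formomega} is a bookkeeping corollary of Lemma \ref{control the zero}, the only small point being the translation of the $2$-jet condition on the holomorphic $1$-form $\omega$ into a Taylor-coefficient condition on its primitive $f$, together with the observation that $c_0$ plays no role and may be set to zero so that no obstruction arises from the value $f(p_0)$.
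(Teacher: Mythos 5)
Your proposal is correct and follows exactly the route the paper intends: the paper's own proof is the single line ``This is a direct consequence of Lemma \ref{control the zero},'' and your argument simply spells out the bookkeeping (translating the $2$-jet condition on $\omega=\pl f$ into the Taylor-coefficient prescription $c_1,c_2,c_3$ for $f$, noting $c_0$ is free) that makes the reduction explicit.
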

\noindent{\bf Proof}. This is a direct consequence of Lemma \ref{control the zero}.
\qed\\
Applying this to the form $\pl G(aQ)$ and using the observation we made above, we can construct a $C^k$ holomorphic form $\omega$ such that in local coordinates $z$ centered at a critical point 
$p'$ of $\Phi$ (i.e $p'=\{z=0\}$ in this coordinate), we have for $\beta=-\pl G(aQ)+\omega=\beta(z)dz$
\begin{equation}\label{decayofb}
\begin{gathered}
|\pl_{\bar{z}}^m\pl^{\ell}_z \beta(z)|=O(|z|^{2+\alpha-\ell-m}) , \textrm{ for } \ell+m\leq 2 ,\quad \textrm{ if }p'\not=p_0\\
|\beta(z)|=O(|z|) , \quad \textrm{ if }p'=p_0.  
 \end{gathered}
 \end{equation}

Now, we let $\chi_1\in C_0^\infty(M_0)$ be a cutoff function supported in a small neighbourhood $U_{p_0}$ of the critical point $p_0$ and identically $1$ near $p_0$, and 
$\chi\in C_0^\infty(M_0)$ is defined similarly with $\chi =1$ on the support of $\chi_1$.
We will construct $r_1= r_{11} +h r_{12}$ in two steps : first, we will construct $r_{11}$ to solve equation \eqref{dequation} 
locally near the critical point $p_0$ of $\Phi$ and then we will 
construct the global correction term $r_{12}$ away from $p_0$ by using the extra vanishing of $\beta$ in \eqref{decayofb} at the other critical points.

We define locally in complex coordinates centered at $p_0$ and containing the support of $\chi$
\[r_{11}:=\chi e^{-2i\psi/h}R(e^{2i\psi/h}\chi_1|F_A|^{-2}\beta)\] 
where $Rf(z) := -(2\pi i)^{-1}\int_{\R^2} \frac{1}{\bar{z}-\bar{\xi}}f d\bar{\xi}\wedge d\xi$ for $f\in L^\infty$ compactly supported 
is the classical Cauchy-Riemann operator inverting locally $\pl_z$ ($r_{11}$ is extended by $0$ outside the neighbourhood of $p$).
The function $r_{11}$ is in $C^{3+\alpha}(M_0)$ and we have 
\begin{equation}\begin{gathered}\label{r11}
e^{-2i\psi/h}\pl(e^{2i\psi/h}r_{11}) = \chi_1(-\pl G(aQ) + \omega) + \eta\\
\textrm{ with }\eta:= e^{-2i\psi/h}R(e^{2i\psi/h}\chi_1\beta|F_A|^{-2})\pl\chi.
\end{gathered}
\end{equation}
We then construct $r_{12}$ by observing that  $\beta$ vanishes to order $2+\alpha$ at critical points of $\Phi$ other than $p$ (from \eqref{decayofb}), and 
$\pl \chi=0$ in a neighbourhood of any critical point of $\psi$, so we can find $r_{12}$ satisfying
\[2ir_{12}\pl\psi = (1-\chi_1)\beta |F_A|^{-2}.\] 
This is possible since both $\pl\psi$ and the right hand side are valued in $T^*_{1,0}M_0$, $\pl \psi$ has finitely many isolated zeroes on $M_0$: 
$r_{12}$ is then a function which is in $C^{2,\alpha}(M_0\setminus{P})$ where $P:=\{p_1,\dots, p_N\}$ is the set of critical points other than $p_0$,
it extends to a $C^{1,\alpha}(M_0)$  and it satisfies in local complex coordinates $z$ near each $p_j$ 
\[ |\pl_{\bar{z}}^m\pl_z^l r_{12}(z)|\leq C|z-p_j|^{1+\alpha-m-l} , \quad m+l\leq 2.\]
by using also the fact that $\pl \psi$ can be locally be considered as holomorphic function with a zero of order $1$ at each $p_j$.
This implies that $r_1\in H^2(M_0)$ and  we have 
\[ e^{-2i\psi/h}|F_A|^2\pl(e^{2i\psi/h}r_1) = \beta+h\pl r_{12}+\eta=-\pl G(aQ)-\omega+ h\pl r_{12} + \eta.\]
Now the first error term $||\pl r_{12}||_{H^1(M_0)}$ is bounded by 
\[||\pl r_{12}||_{H^1(M_0)}\leq C\left( \left|\left|\frac{(1-\chi_1)b(z)}{\pl_z \psi(z)}\right|\right|_{H^2(U_{p_0})}\right)\leq
C \]
for some constant $C$, where we used the fact that $\frac{(1-\chi_1)\beta(z)}{\pl_z \psi(z)}$ is in $H^2(U_{p_0})$ and independent of $h$.
To deal with the $\eta$ term, we need the following 
\begin{lemma}\label{termeta}
The following estimates hold true 
\[\begin{gathered} 
||\eta||_{H^2}=O(|\log h|),\quad \|\eta\|_{H^1}\leq O(h|\log h|),  \quad ||r_{1}||_{L^2}=O(h), \quad ||r_1-h\til{r}_{12}||_{L^2}=o(h) 
\end{gathered} \]
where $\til{r}_{12}$ solves $2i\til{r}_{12}\pl\psi = \beta|F_A|^{-2}$ is independent of $h$ and $H^2$ near the boundary $\partial M_0$.
\end{lemma}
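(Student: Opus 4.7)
The plan is to analyze, for both families of estimates, the Cauchy transform $F(z) := R(e^{2i\psi/h}\chi_1\beta|F_A|^{-2})(z)$ near the interior Morse critical point $p_0$ of $\psi$, where by \eqref{decayofb} the amplitude $\chi_1\beta|F_A|^{-2}$ vanishes. I will exploit this vanishing in two complementary ways: via stationary phase for the $\eta$ bounds, and via integration by parts against the phase for the $r_1$ bounds.

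For the $\eta$ estimates, the crucial observation is that $\chi\equiv 1$ on $\supp(\chi_1)$, so $\supp(\pl\chi)$ and $\supp(\chi_1)$ are disjoint; hence the Cauchy kernel $(\bar{z}-\bar{\xi})^{-1}$ is jointly smooth on $\supp(\pl\chi)\times\supp(\chi_1)$. Consequently $F$ is smooth in $z$ on $\supp(\pl\chi)$ and is given by an oscillatory integral whose amplitude vanishes at the unique nondegenerate critical point of the phase inside $\supp(\chi_1)$. Stationary phase then yields $\|F\|_{C^k(\supp(\pl\chi))} = O(h^2)$ uniformly in $k$. Since $\eta = (\pl\chi)e^{-2i\psi/h}F$ and each $z$-derivative falling on $e^{-2i\psi/h}$ contributes a factor $h^{-1}$, this gives $\|\eta\|_{H^k}\lesssim h^{2-k}$ for $k\leq 2$, which in particular yields both claimed bounds.

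For the $r_1$ estimates I use the integration-by-parts identity
\[R(e^{2i\psi/h}f) = \frac{h\,e^{2i\psi/h}f}{2i\pl_z\psi} - \frac{h}{2i}R\bigl(e^{2i\psi/h}\pl_z(f/\pl_z\psi)\bigr),\]
which follows from $R\pl_z = \mathrm{Id}$ on compactly supported functions, with $f = \chi_1\beta|F_A|^{-2}$. The quotient $f/\pl_z\psi$ is bounded because both $\beta$ and $\pl_z\psi$ vanish at the Morse point $p_0$. Multiplying through by $\chi\,e^{-2i\psi/h}$ and using $\chi\chi_1 = \chi_1$, the main term is $h\chi_1\til r_{12}$; together with $hr_{12} = h(1-\chi_1)\til r_{12}$ this produces
\[r_1 = h\til r_{12} - \frac{h\chi}{2i}e^{-2i\psi/h}R(e^{2i\psi/h}g),\qquad g := \pl_z(f/\pl_z\psi).\]
The bound $\|r_1\|_{L^2} = O(h)$ is then immediate from the $L^p$-boundedness of $R$ and $g\in L^p$, while the refined bound $\|r_1-h\til r_{12}\|_{L^2} = o(h)$ reduces to showing $\|R(e^{2i\psi/h}g)\|_{L^2} = o(1)$ as $h\to 0$. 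The main obstacle is that $g$ lies only in $L^p$ for $p<2$, since by the construction of $\omega$ the Taylor expansion of $\beta$ at $p_0$ starts with a $\bar{z}$-term, so $f/\pl_z\psi\sim\bar{z}/z$ is bounded but not $W^{1,p}$, and Lemma \ref{estimate1} does not apply directly. The resolution is to use the compactness of $R : L^p\to L^2$ for $p\in(1,2)$, which follows from $R$ mapping $L^p$ boundedly into $W^{1,p}$ together with Rellich's theorem; combined with $e^{2i\psi/h}g\rightharpoonup 0$ weakly in $L^p$ (Riemann--Lebesgue), this yields the required $L^2$ decay. Finally, the assertion that $\til r_{12} = \beta|F_A|^{-2}/(2i\pl_z\psi)$ is $H^2$ in a neighbourhood of $\pl M_0$ is immediate from $\pl_z\psi$ being bounded below there (no critical points of $\psi$ meet $\pl M_0$) together with the $C^{2,\alpha}$ and $W^{2,p}$ regularities of $\beta$ and $|F_A|^{-2}$ respectively.
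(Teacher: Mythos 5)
For the $r_1$ estimates, your argument is sound and follows a genuinely different route than the paper, which simply cites Proposition 2.7 of \cite{IUY}. Your integration-by-parts identity yields the clean representation $r_1 - h\til{r}_{12} = -\tfrac{h}{2i}\chi e^{-2i\psi/h}R(e^{2i\psi/h}g)$, and the combination of compactness of $R:L^p\to L^2$ for $p\in(1,2)$ with the weak convergence $e^{2i\psi/h}g\rightharpoonup 0$ in $L^p$ is a self-contained way to get the $o(h)$ decay that the paper obtains by citation; this is a useful alternative.

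The $\eta$ estimates, however, contain a genuine gap. You assert that stationary phase yields $\|F\|_{C^k(\supp\pl\chi)}=O(h^2)$, but this ignores the limited regularity of the amplitude: $\beta$ is only $C^{2,\alpha}$ and vanishes merely to \emph{first} order at $p_0$, per \eqref{decayofb}. Integrating by parts once against the phase produces a factor $h$ and the integrand $\pl_\xi\bigl(a_z/\pl_\xi\psi\bigr)$, which behaves like $1/|\xi|$ near $p_0$ (since $a_z/\pl_\xi\psi\sim\bar\xi/\xi$); a second integration by parts produces a non-integrable $1/|\xi|^3$ singularity, and the stationary-phase lemma of second order requires more derivatives than the amplitude possesses. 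This is exactly the difficulty the paper addresses by splitting $\beta = \langle\nabla\beta(0),\xi\rangle + \til\beta$: the linear piece is smooth and vanishes at $p_0$, while the remainder $\til\beta$ vanishes to second order but is only $C^{2,\alpha}$, and the latter is handled by an explicit integration by parts against a scale-$h$ cutoff $\theta_h$ in \eqref{intbyparts}. That computation is what produces the $|\log h|$ in the lemma, coming from $h^2\int_{h<|\xi|<\delta}|\xi|^{-2}$. Your bound $O(h^2)$ is stronger than what the available regularity supports; the argument as given does not establish it, and with generic second-order Taylor data for $\til\beta$ the $|\log h|$ factor is genuinely present.

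A separate, smaller error: your justification for $\til{r}_{12}\in H^2$ near $\pl M_0$ via ``$\pl_z\psi$ bounded below there'' is not correct. Proposition \ref{criticalpoints} only guarantees that $\Phi$ has no \emph{degenerate} critical points on the boundary; nondegenerate boundary critical points of $\Phi$ do occur, so $\pl_z\psi$ can vanish on $\pl M_0$. The correct reason, as the paper states, is that $\beta$ vanishes to order $2+\alpha$ at every critical point other than $p_0$ (this is what $\omega$ was constructed to achieve in \eqref{decayofb}), and this vanishing compensates the zero of $\pl_z\psi$ at any boundary critical point.
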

\textbf{Proof}. We start by observing that since $\beta$ vanishes to high order at all critical points of $\Phi$ except for the interior point $p_0\in M$, one has that $\til{r}_{12}$ is in $H^2$ in a neighbhourhood of the boundary $\partial M$. Furthermore, 
\begin{equation}\label{decomposition} 
\begin{gathered}
||r_1-h\til{r}_{12}||_{L^2}= \left| \left| \chi e^{-2i\psi/h}R (e^{2i\psi/h}\chi_1 \beta|F_A|^{-2})-h\frac{\chi_1\beta|F_A|^{-2}}{2i\pl_z\psi}\right|\right|_{L^2(U_p)},\\
||r_1||_{L^2}\leq \left| \left| \chi e^{-2i\psi/h}R (e^{2i\psi/h}\chi_1 \beta|F_A|^{-2})-h\frac{\chi_1\beta|F_A|^{-2}}{2i\pl_z\psi}\right|\right|_{L^2(U_p)}+
h||\til{r}_{12}||_{L^2(M_0)}
\end{gathered}
\end{equation}
The first term is estimated in Proposition 2.7 of \cite{IUY}, it is a $o(h)$, while the $||\til{r}_{12}||_{L^2}$ is 
independent of $h$. 
Now are going to estimate the $H^2$ norms of $\eta$. Locally in complex coordinates $z$ centered at $p_0$ (ie. $p_0=\{z=0\}$), we have 
\begin{equation} \label{definitioneta}
\eta(z)= -\pl_z\chi(z)e^{-\frac{2i\psi(z)}{h}}\int_{\cc} e^{\frac{2i\psi(\xi)}{h}}\frac{1}{\bar{z}-\bar{\xi}} \chi_1(\xi)\beta(\xi) |F_A(\xi)|^{-2}\frac{d\xi_1d\xi_2}{\pi} , 
\quad \xi=\xi_1+i\xi_2.
\end{equation} 
Since $\beta$ is $C^{2,\alpha}$ in $U$, we decompose $\beta(\xi)=\cjg\nabla \beta(0),\xi\cjd+\til{\beta}(\xi)$ using Taylor formula, so we have 
$\til{\beta}(0)=\pl_\xi \til{\beta}(0)=0$ and we split the integral \eqref{definitioneta} with $\cjg\nabla \beta(0),\xi\cjd$ and $\til{\beta}(\xi)$.  
Since the integrand with the $\cjg\nabla \beta(0),\xi\cjd$ is smooth and compactly supported in $\xi$ (recall that $\chi_1=0$ on the support of $\pl_z\chi$),
we can apply stationary phase to get that 
\[\left|\pl_z\chi(z)e^{-\frac{2i\psi(z)}{h}}\int_{\cc} e^{\frac{2i\psi(\xi)}{h}}\frac{1}{\bar{z}-\bar{\xi}} \chi_1|F_A(\xi)|^{-2}(\xi)\cjg\nabla b(0),\xi\cjd \frac{d\xi_1d\xi_2}{\pi}\right|\leq Ch^2\]
uniformly in $z$.
Now set $\til{\beta}_z(\xi)=\pl_z\chi(z)\chi_1(\xi)\til{\beta}(\xi)/(\bbar{z-\xi})$ which is $C^{2,\alpha}$ in $\xi$ and smooth in $z$. 
Let $\theta\in C_0^\infty([0,1))$ be a cutoff function which is equal to $1$ near $0$ and 
set $\theta_h(\xi):=\theta(|\xi|/h)$, then we have by integrating by parts 
\begin{equation}\label{intbyparts}
\begin{split}
\int_{\cc} e^{\frac{2i\psi(\xi)}{h}}|F_A(\xi)|^{-2}\til{\beta}_z(\xi) d\xi_1d\xi_2= &
h^2\int_{{\rm supp}(\chi_1)} e^{\frac{2i\psi(\xi)}{h}}\pl_{\bar{\xi}}\left(\frac{1-\theta_h(\xi)}{2i\pl_{\bar{\xi}} \psi}
\pl_{\xi}\left(\frac{|F_A(\xi)|^{-2}\til{\beta}_z(\xi)}{2i\pl_\xi\psi}\right)\right) d\xi_1d\xi_2\\
&-h\int_{{\rm supp}(\chi_1)} e^{\frac{2i\psi(\xi)}{h}}\theta_h(\xi)\pl_{\xi}\left(\frac{|F_A(\xi)|^{-2}\til{\beta}_z(\xi)}{2i\pl_\xi\psi}\right) d\xi_1d\xi_2 .
\end{split}\end{equation}
Using polar coordinates with the fact that $\til{\beta}_z(0)=0$, 
it is easy to check that the second term in \eqref{intbyparts} is bounded uniformy in $z$ 
by $Ch^{2}$. 
To deal with the first term, we use $\til{\beta}_z(0)=\pl_\xi \til{\beta}_z(0)=\pl_{\bar{\xi}}\til{\beta}_z(0)=0$ and 
a straightforward computation in polar coordinates shows
that the first term of \eqref{intbyparts}  is bounded uniformly in $z$ by $Ch^{2}|\log(h)|$.
We conclude that
\[ ||\eta||_{L^2}\leq C||\eta||_{L^\infty}\leq Ch^{2}|\log h|.\]
It is also direct to see that the same estimates holds with a loss of $h^{-2}$ for any derivatives in $z,\bar{z}$ of order less or equal to $2$, 
since they only hit the $\chi(z)$ factor, the $(\bar{z}-\bar{\xi})^{-1}$ factor or the oscillating term $e^{-2i\psi(z)/h}$.
So  we deduce that 
\[ ||\eta||_{H^2}= O(|\log h|).\]
and this ends the proof.
\qed\\

We summarize the result of this section with the following
\begin{lemma}
\label{solve equation to next order}
Let $k\in\nn$ be large and $\Phi\in C^k(M_0)$ be a holomorphic function on $M_0$ which is Morse in $M_0$ with a critical point at $p_0\in {\rm int}(M_0)$. 
Let $a\in C^k(M_0)$ be a holomorphic function on $M_0$ purely imaginary on $\Gamma_0$ and vanishing to high order at every critical point of $\Phi$ other than $p$. 
Then there exists $r_1 \in H^{2}(M_0)$ such that $r_1 = h \tilde{r}_{12} + o_{L^2}(h)$ with $\tilde{r}_{12}\in L^2$ independent of $h$ and
\[e^{-\Phi/h}L_{X,V}e^{\Phi/h}(F_A^{-1}a + \bar F_Ar_1) =O_{L^2}(h|\log h|).\]
\end{lemma}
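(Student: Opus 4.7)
The plan is to collect the constructions that subsection \ref{constr1} has already set up and verify the two required properties: the asymptotic form $r_1 = h\tilde r_{12} + o_{L^2}(h)$ and the error estimate on $e^{-\Phi/h}L_{X,V}e^{\Phi/h}(F_A^{-1}a + \bar F_A r_1)$. The natural strategy is to split the principal term from the correction and use the two factorizations of $L_{X,V}$ in \eqref{LXV in in dbar form} to reduce everything to a single $\partial$-equation that can be solved locally near $p_0$ and patched globally away from it.

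First, I would handle the principal term $e^{\Phi/h}F_A^{-1}a$ using the factorization $L_{X,V}=-2i\star \bar F_A\,\partial\,|F_A|^{-2}\,\bar\partial F_A + Q$. Since $a$ and $\Phi$ are holomorphic, $\bar\partial F_A\cdot(F_A^{-1}e^{\Phi/h}a)=\bar\partial(e^{\Phi/h}a)=0$, so the whole first-order block annihilates and the output collapses to $Qe^{\Phi/h}F_A^{-1}a$. Therefore the task reduces to finding $r_1\in H^2(M_0)$ with $L_{X,V}(e^{\Phi/h}\bar F_A r_1) = -Q e^{\Phi/h}F_A^{-1}a + O_{L^2}(h|\log h|)$. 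For the correction term I would use the dual factorization $L_{X,V}=-2i\star F_A^{-1}\bar\partial|F_A|^2\partial \bar F_A^{-1}+\tilde Q$; after conjugating by $e^{\Phi/h}$ and inverting the outer $\bar\partial$ against the Laplacian Green operator $G$ (so that $\bar\partial\partial G=(i/2)\star^{-1}$), the requirement becomes precisely the inhomogeneous $\partial$-equation \eqref{dequation}, namely
\[
e^{-2i\psi/h}|F_A|^2\partial\bigl(e^{2i\psi/h}r_1\bigr) = -\partial G(aQ)+\omega + O_{H^1}(h|\log h|),
\]
for a holomorphic $1$-form $\omega$ still to be chosen.

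Second, I would pick $\omega$ via Lemma \ref{formomega} so that $\beta:=-\partial G(aQ)+\omega$ has the vanishing behavior \eqref{decayofb}: to order $2+\alpha$ at each critical point $p'\neq p_0$ of $\Phi$, and to order $1$ at $p_0$. The high vanishing of $a$ at $p'\neq p_0$ together with elliptic regularity of $G$ makes this possible with just enough degrees of freedom. With $\beta$ in hand, set $r_1 = r_{11}+h r_{12}$, where $r_{11}:=\chi e^{-2i\psi/h}R\!\left(e^{2i\psi/h}\chi_1|F_A|^{-2}\beta\right)$ inverts $\partial$ via the Euclidean Cauchy kernel $R$ in a chart around $p_0$, and $r_{12}$ is defined outside the support of $\chi_1$ by the algebraic identity $2ir_{12}\partial\psi=(1-\chi_1)\beta|F_A|^{-2}$. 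The extra vanishing of $\beta$ at $p'\neq p_0$ is exactly what is needed to absorb the simple zeros of $\partial\psi$ there and keep $r_{12}$ in $H^2$ globally.

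Finally, the hard part will be controlling the commutator error $\eta := e^{-2i\psi/h}R(e^{2i\psi/h}\chi_1\beta|F_A|^{-2})\partial\chi$ that arises when $\partial$ is pushed past the cutoff. I would bound it exactly as in Lemma \ref{termeta}: Taylor expand $\beta$ at $p_0$, observe that the linear part is handled by stationary phase ($O(h^2)$), and for the quadratic remainder $\tilde\beta$ with $\tilde\beta(0)=\partial\tilde\beta(0)=0$ integrate by parts in $e^{2i\psi(\xi)/h}$ after cutting off the stationary point with a scaled bump $\theta_h(\xi)=\theta(|\xi|/h)$; the Jacobian in polar coordinates absorbs the singularity of $1/\partial\psi$ at $p_0$ and produces only a logarithmic loss, giving $\|\eta\|_{H^2}=O(|\log h|)$ and $\|\eta\|_{H^1}=O(h|\log h|)$. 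Applying $-2i\star F_A^{-1}\bar\partial$ to \eqref{dequation} then yields the required $O_{L^2}(h|\log h|)$ bound on $e^{-\Phi/h}L_{X,V}e^{\Phi/h}(F_A^{-1}a+\bar F_A r_1)$, while the decomposition \eqref{decomposition} identifies $\tilde r_{12}$ as the solution of $2i\tilde r_{12}\partial\psi=\beta|F_A|^{-2}$ (which is $h$-independent and $H^2$ near $\partial M_0$ because $a$ is purely imaginary on $\Gamma_0$) and gives $r_1 = h\tilde r_{12}+o_{L^2}(h)$.
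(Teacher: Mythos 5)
Your proposal is essentially a faithful reconstruction of the paper's own argument for this lemma: you pass through the same factorization \eqref{LXV in in dbar form}, reduce to the $\partial$-equation \eqref{dequation} via the Green operator identity $\bar\partial\partial G=\tfrac{i}{2}\star^{-1}$, pick $\omega$ with Lemma \ref{formomega} to get the vanishing \eqref{decayofb}, split $r_1=r_{11}+hr_{12}$ with $r_{11}$ built from the local Cauchy transform and $r_{12}$ solving $2ir_{12}\partial\psi=(1-\chi_1)\beta|F_A|^{-2}$, and control the commutator term $\eta$ exactly as in Lemma \ref{termeta}. So this is the same route, not a different one. One small misattribution worth correcting: the reason $\tilde r_{12}$ lies in $H^2$ near $\partial M_0$ is not that $a$ is purely imaginary on $\Gamma_0$, but that $\beta$ (and hence $(1-\chi_1)\beta/\partial\psi$) vanishes to high order at all critical points of $\psi$ other than the interior point $p_0$, which follows from the prescribed high-order vanishing of $a$ at those points; the imaginary-on-$\Gamma_0$ condition on $a$ is only used later, in Lemma \ref{a0 and a1}, to annihilate the boundary trace of the full ansatz.
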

One can follow the same construction for the antiholomorphic phase $\bar\Phi$ in place of $\Phi$. Indeed, repeating the above argument in this case yields
\begin{lemma}
\label{antiholo solve equation to next order}
Let $k\in\nn$ be large and $\Phi\in C^k(M_0)$ be a holomorphic function on $M_0$ which is Morse in $M_0$ with a critical point at $p_0\in {\rm int}(M_0)$. 
Let $a\in C^k(M_0)$ be a holomorphic function on $M$ purely imaginary on $\Gamma_0$ and vanishing to high order at every critical point of $\Phi$ other than $p$. 
Then there exists $r_1' \in H^{2}(M_0)$ such that $r'_1 = h \tilde{r}_{12}' + o_{L^2}(h)$ with $\tilde{r}_{12}'\in L^2$ independent of $h$ and
\[e^{-\bar\Phi/h}L_{X,V}e^{\bar\Phi/h}(\bar F_A\bar a +  F_A^{-1}r'_1) =O_{L^2}(h|\log h|).\]
\end{lemma}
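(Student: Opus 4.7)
The plan is to mirror the proof of Lemma \ref{solve equation to next order} with the roles of $\pl$ and $\bar\pl$ interchanged. The starting point is the second factorization in \eqref{LXV in in dbar form}, namely $L_{X,V} = -2i\star\bar F_A\,\pl|F_A|^{-2}\bar\pl F_A + Q$. Since $\bar\Phi$ is antiholomorphic, $\pl\bar\Phi=0$ and $\bar\pl\bar\Phi=-2i\bar\pl\psi$, so conjugation by $e^{\bar\Phi/h}$ yields
\[e^{-\bar\Phi/h}L_{X,V}e^{\bar\Phi/h} \;=\; -2i\star\bar F_A\,\pl|F_A|^{-2}\bigl(e^{2i\psi/h}\bar\pl e^{-2i\psi/h}\bigr)F_A + Q,\]
which is the exact mirror of the identity used in Section 6.1 for $e^{-\Phi/h}L_{X,V}e^{\Phi/h}$: the oscillatory factor now accompanies $\bar\pl$ rather than $\pl$, and the holomorphic amplitude $F_A^{-1}a$ is replaced by the antiholomorphic amplitude $\bar F_A\bar a$.

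Inserting the ansatz $\bar F_A\bar a + F_A^{-1}r'_1$ and using that $\bar a$ is antiholomorphic together with the relation $\pl\bar F_A = -i\bar A\bar F_A$, the requirement $e^{-\bar\Phi/h}L_{X,V}e^{\bar\Phi/h}(\bar F_A\bar a + F_A^{-1}r'_1) = O_{L^2}(h|\log h|)$ reduces to solving
\[e^{2i\psi/h}|F_A|^{-2}\bar\pl\bigl(e^{-2i\psi/h}r'_1\bigr) \;=\; -\bar\pl G(\bar a\,Q') + \bar\omega + O_{H^1}(h|\log h|),\]
where $Q'\in W^{2,p}(M_0)$ is the lower-order potential produced by the conjugation and $\bar\omega$ is a smooth antiholomorphic $(0,1)$-form to be prescribed. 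The antiholomorphic analog of Lemma \ref{formomega}, obtained by complex conjugation of the construction in Lemma \ref{control the zero}, allows one to choose $\bar\omega$ so that $\bar\beta := -\bar\pl G(\bar aQ') + \bar\omega$ satisfies the mirror of \eqref{decayofb}: it vanishes to order $2+\alpha$ at every critical point of $\Phi$ other than $p_0$, and to order one at $p_0$.

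Given such a $\bar\beta$, I construct $r'_1 = r'_{11} + h\tilde r'_{12}$ by the exact mirror of the procedure following \eqref{r11}: in local coordinates centered at $p_0$ set
\[r'_{11} \;:=\; \chi\, e^{2i\psi/h}\,\bar R\bigl(e^{-2i\psi/h}\chi_1 |F_A|^{2}\bar\beta\bigr),\]
where $\bar R$ is the Cauchy--Riemann inverse of $\pl_{\bar z}$ (the complex conjugate of the operator $R$ used before, with the same mapping properties), and take $\tilde r'_{12}$ to be the global function solving $-2i\tilde r'_{12}\,\bar\pl\psi = (1-\chi_1)\bar\beta|F_A|^{2}$. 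This is well defined because $\bar\pl\psi$ has the same isolated zeros as $\pl\psi$ and $\bar\beta$ vanishes to sufficient order at each of them. The commutator error $\eta'$ produced by $[\bar R,\chi]$ is then controlled by the complex conjugate of the stationary-phase argument of Lemma \ref{termeta}, yielding $\|\eta'\|_{H^1}=O(h|\log h|)$, $\|r'_1\|_{L^2}=O(h)$, and $\|r'_1 - h\tilde r'_{12}\|_{L^2} = o(h)$.

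The main obstacle, as in Section 6.1, is bookkeeping rather than any single estimate: one must verify carefully that the swapped factorization produces the claimed equation with $\pl\leftrightarrow\bar\pl$, and that the boundary condition on $\Gamma_0$ (where $a$ is purely imaginary and $F_A$ is unitary) remains compatible with the antiholomorphic correction $\bar\omega$. All analytic estimates transfer verbatim, since they depend only on the Morse property of $\psi$ (shared by $\pl\psi$ and $\bar\pl\psi$) and on the mapping properties of the Cauchy--Riemann parametrix, which are invariant under complex conjugation.
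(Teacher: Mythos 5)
Your proposal is correct and is essentially the approach the paper itself takes: the paper's proof of Lemma \ref{antiholo solve equation to next order} is just the single sentence ``One can follow the same construction for the antiholomorphic phase $\bar\Phi$ in place of $\Phi$,'' and you have carried that out in detail, using the complementary factorization of $L_{X,V}$ from \eqref{LXV in in dbar form} so that the oscillatory factor sits on $\bar\pl$ rather than $\pl$, the prefactor $\bar F_A$ on the correction becomes $F_A^{-1}$ (hence the $|F_A|^{-2}$ weight in the transport equation), and the local Cauchy--Riemann parametrix $R$ is replaced by its conjugate $\bar R$. The only minor slip is labeling the factorization $-2i\star\bar F_A\,\pl|F_A|^{-2}\bar\pl F_A + Q$ as the ``second'' one in \eqref{LXV in in dbar form} when it is the first, but the formula you write and use is the correct one; the remaining sign and constant bookkeeping you flag is real but routine, as you say.
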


\end{subsection}
\begin{subsection}{Construction of $a_0$}
We have constructed the correction terms $r_1$ which solves the Schr\"odinger equation to order $h$ as stated in Lemma \ref{solve equation to next order}. In this subsection, we will construct  a holomorphic function $a_0$ which annihilates the boundary value of the solution on 
$\Gamma_0$. In particular, we have the following
\begin{lemma}
\label{a0 and a1}
There exists a holomorphic function $a_0\in H^2(M_0)$ and an antiholomorphic function $\tilde a_0\in H^2(M_0)$  independent of $h$ such that 
\[e^{-\Phi/h}L_{X,V}\big( e^{\Phi/h}(F_A^{-1}a + \bar F_Ar_1) + e^{\bar\Phi/h}(\bar F_A\bar a +  F_A^{-1}r_1') + h e^{\Phi/h}F_A^{-1}a_0 + h e^{\bar\Phi/h} \bar F_A \tilde{a}_0\big)= O_{L^2}(h|\log h|)\]
and 
\[\big( e^{\Phi/h}(F_A^{-1}a + \bar F_Ar_1) + e^{\bar\Phi/h}(\bar F_A\bar a +  F_A^{-1}r_1') + h e^{\Phi/h}F_A^{-1}a_0 + h e^{\bar\Phi/h} \bar F_A \tilde{a}_0\big)|_{\Gamma_0} =0.\]
\end{lemma}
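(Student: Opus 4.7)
The strategy is in two steps: first show that prepending a holomorphic function $a_0$ and an antiholomorphic function $\tilde a_0$ (each multiplied by $h$) to the ansatz does not destroy the $O_{L^2}(h|\log h|)$ estimate already achieved by Lemmas \ref{solve equation to next order} and \ref{antiholo solve equation to next order}, and then use the freedom in choosing $(a_0,\tilde a_0)$ to annihilate the boundary value of the full ansatz on $\Gamma_0$ exactly.

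For the equation, I will use the factorizations in \eqref{LXV in in dbar form}. Since $\Phi$ and $a_0$ are holomorphic, $\bar\pl(F_A\cdot e^{\Phi/h}F_A^{-1}a_0)=\bar\pl(e^{\Phi/h}a_0)=0$, so $L_{X,V}(e^{\Phi/h}F_A^{-1}a_0)=Q\, e^{\Phi/h}F_A^{-1}a_0$. After multiplying by $h$ and conjugating by $e^{-\Phi/h}$ this contributes $h Q F_A^{-1}a_0 = O_{L^2}(h)$, which is absorbed by the $O_{L^2}(h|\log h|)$ remainder. The antiholomorphic term is handled identically with the second factorization.

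For the boundary, I will evaluate the ansatz on $\Gamma_0$, where $\Phi\in\R$ (so $e^{\Phi/h}=e^{\bar\Phi/h}$), $|F_A|=1$ (so $F_A^{-1}=\bar F_A$), and $a$ is purely imaginary (so $a+\bar a=0$). A crucial observation is that in the construction of $r_1$ in Subsection \ref{constr1}, the local correction $r_{11}$ is supported in a small neighborhood of the interior critical point $p_0$, and the cutoff $\chi_1$ appearing in the definition of $r_{12}$ vanishes near $\partial M_0$. Consequently, on $\Gamma_0$ we have $r_1=h\tilde r_{12}$ exactly, where $2i\tilde r_{12}\pl\psi=\beta|F_A|^{-2}$; since $\beta\in C^{2+\alpha}$ and $\pl\psi$ does not vanish near the boundary, $\tilde r_{12}$ is smooth in a neighborhood of $\Gamma_0$. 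The analogous statement holds for $r_1'$. Pulling out the common nonzero factor $h\,e^{\Phi/h}\bar F_A$, the boundary condition reduces to
\[
(\tilde r_{12}+\tilde r_{12}'+a_0+\tilde a_0)|_{\Gamma_0}=0.
\]

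It remains to solve this boundary equation by holomorphic plus antiholomorphic data. Writing the complex function $f:=-(\tilde r_{12}+\tilde r_{12}')|_{\Gamma_0}=f_1+if_2$ with $f_1,f_2$ real and independent of $h$, Proposition \ref{surject}(iii) provides holomorphic functions $P_1,P_2\in H^2(M_0)$ with $\re(P_j)|_{\Gamma_0}=f_j/2$. Taking $a_0:=P_1+iP_2$ (holomorphic) and $\tilde a_0:=\bar P_1+i\bar P_2$ (antiholomorphic) yields $(a_0+\tilde a_0)|_{\Gamma_0}=2\re(P_1)+2i\re(P_2)=f$, as required, and both are manifestly independent of $h$. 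I expect the main obstacle to be the exact matching of boundary values: the argument relies on the precise identity $r_1|_{\Gamma_0}=h\tilde r_{12}|_{\Gamma_0}$ (not merely up to $o(h)$), which in turn rests on the fact that the oscillatory piece $r_{11}$ and the factor $(1-\chi_1)$ in the construction are arranged so that neither contributes on $\Gamma_0$; once this is verified, the remainder is a standard application of Proposition \ref{surject}(iii).
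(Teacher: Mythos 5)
Your proof is correct and follows essentially the same route as the paper: the paper also observes that $h^{-1}r_1|_{\partial M_0}=\tilde r_{12}|_{\partial M_0}$ exactly (because $r_{11}$ and the cutoff $\chi_1$ are supported away from the boundary), reduces the boundary condition to $(a_0+\tilde a_0)|_{\Gamma_0}=-(\tilde r_{12}+\tilde r_{12}')|_{\Gamma_0}$, and invokes Proposition \ref{surject}(iii). The only difference is that you spell out the real/imaginary splitting that produces $(a_0,\tilde a_0)$ and the factorization argument showing the new terms contribute only $O_{L^2}(h)$ to the equation, both of which the paper leaves implicit.
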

\noindent{\bf Proof}. First, notice that $h^{-1}r_1|_{\pl M_0}=\til{r}_{12}|_{\pl M_0}\in H^{3/2}(\pl M_0)$ and $h^{-1}r_1'|_{\pl M_0}=\til{r}_{12}'|_{\pl M_0}\in H^{3/2}(\pl M_0)$ are independent of $h$. Using part (iii) of Proposition \ref{surject} one can construct $a_0, \tilde a_0 \in H^2(M_0)$ holomorphic and antiholomoprhic respectively such that $[a_0 + \tilde a_0]\mid_{\Gamma_0} =- (\til{r}_{12} + \til{r}_{12}')\mid_{\Gamma_0}$. 
Since $\Phi$ is purely real on $\Gamma_0$ and $F_A$ is unitary on $\Gamma_0$, we see that 
\[\big( e^{\Phi/h}(F_A^{-1}a + \bar F_Ar_1) + e^{\bar\Phi/h}(\bar F_A\bar a +  F_A^{-1}r_1') + h e^{\Phi/h}F_A^{-1}a_0 + h e^{\bar\Phi/h} \bar F_A \tilde{a}_0\big)|_{\Gamma_0} =0.\]
This combined with the asymptotic given by Lemma \ref{solve equation to next order} and Lemma \ref{antiholo solve equation to next order} completes the proof.\qed\\
\end{subsection}
We can extend the $O_{L^2}(|h \log h|)$ remainder in Lemma \ref{a0 and a1} trivially to all of $M_0'$ and apply Corollary \ref{H1 solvability} to obtain the following CGO:
\begin{proposition}
\label{completecgo}
There exist solutions to $L_{X,V}u = 0$ with boundary condition $u|_{\Gamma_0} = 0$ of the form \eqref{cgo} with $r_1$, $r_1'$, $a_0$, $\tilde{a}_0$ constructed in the previous sections and $r_2$ satisfying $\|r_2\|_{H^1_{scl}}= O(h^{3/2}|\log h|)$.
\end{proposition}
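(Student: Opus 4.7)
The plan is to apply Corollary \ref{H1 solvability} to absorb the error produced by Lemma \ref{a0 and a1}. Denote the entire parenthesized quasimode by
\[
U_0 := e^{\Phi/h}(F_A^{-1}a + \bar F_A r_1) + e^{\bar\Phi/h}(\bar F_A\bar a + F_A^{-1} r_1') + h e^{\Phi/h} F_A^{-1}a_0 + h e^{\bar\Phi/h}\bar F_A \tilde a_0,
\]
so that Lemma \ref{a0 and a1} gives both $U_0\mid_{\Gamma_0}=0$ and $e^{-\phi/h}L_{X,V}U_0 = O_{L^2(M_0)}(h|\log h|)$, where $\phi := \re \Phi$. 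The remainder $r_2$ will be obtained by solving $e^{-\phi/h}L_{X,V}e^{\phi/h} r_2 = -e^{-\phi/h}L_{X,V}U_0$ with zero boundary condition on the appropriate segment.

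First I would check that $\phi$ is an admissible Carleman weight for Proposition \ref{shifted carleman estimate with boundary}. Since $\Phi$ was constructed via Proposition \ref{criticalpoints} to be holomorphic and Morse with $\Phi\mid_{\Gamma_0}\in\R$, we have that $\phi$ is harmonic and Morse on $M$ and the imaginary part $\psi$ vanishes on $\Gamma_0$; the Cauchy--Riemann equations then force $\partial_\nu\phi\mid_{\Gamma_0}=0$. Thus $\phi$ satisfies the hypotheses on an open boundary neighborhood $\Gamma_0'\supset\Gamma_0$ inside the enlarging surface $M$ of Section 4, as required to apply Corollary \ref{H1 solvability}.

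Next, extend $f := -e^{-\phi/h}L_{X,V}U_0$ trivially by zero from $M_0$ to $M_0'$, which is legitimate because $f\in L^2(M_0)$ with norm $O(h|\log h|)$ and the extension lies in $L^2(M_0')$ with the same bound. Corollary \ref{H1 solvability} with weight $\phi$ then yields $r_2\in H^1_0(M)$ satisfying
\[
e^{-\phi/h}L_{X,V}e^{\phi/h} r_2 = f \quad \text{in } M_0', \qquad r_2\mid_{\Gamma'} = 0,
\]
with $\|r_2\|_{H^1_{scl}(M)} \leq \sqrt{h}\,\|f\|_{L^2(M_0')} = O(h^{3/2}|\log h|)$. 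Since $\Gamma_0\subset \Gamma'$ by the construction of $M_0'$ in Section 4.1, the restriction of $r_2$ to $\partial M_0$ vanishes identically on $\Gamma_0$. Setting $u := U_0 + e^{\phi/h}r_2$ on $M_0$ then produces a solution of $L_{X,V}u = 0$ with $u\mid_{\Gamma_0}=0$ in exactly the form \eqref{cgo}.

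The only non-bookkeeping step is the verification that $\phi$ is a Carleman weight compatible with the geometry of $M_0'$; but this reduces to the Cauchy--Riemann identity $\partial_\nu\phi=\pm\partial_\tau\psi$ together with $\psi\mid_{\Gamma_0}=0$, so there is no genuine obstacle. Everything else is matching the $O(h|\log h|)$ error bound of Lemma \ref{a0 and a1} with the $\sqrt{h}$ gain in Corollary \ref{H1 solvability} to reach the announced $O(h^{3/2}|\log h|)$ estimate on $r_2$.
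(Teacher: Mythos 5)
Your argument matches the paper's own (very brief) proof: extend the $O_{L^2}(h|\log h|)$ remainder of Lemma \ref{a0 and a1} by zero to $M_0'$, apply Corollary \ref{H1 solvability} with the weight $\phi = \re\Phi$, and read off the $O(h^{3/2}|\log h|)$ bound on $r_2$. Your additional verification that $\partial_\nu\phi\mid_{\Gamma_0'}=0$ via $\psi\mid_{\Gamma_0'}=0$ and the Cauchy--Riemann equations is the correct justification that $\phi$ satisfies the hypotheses of Proposition \ref{shifted carleman estimate with boundary}, which the paper leaves implicit.
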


\end{section}

\begin{section}{Recovery of Coefficients}
\begin{subsection}{Recovering the Modulus of $F_{A_j}$} We assume that ${\cal C}_{X_1, V_1, \partial M_0\backslash \Gamma} = {\cal C}_{X_2, V_2, \partial M_0\backslash \Gamma}$. By Proposition \ref{new boundary integral id} we have that there exists a portion of the boundary $\Gamma_0$ containing $\Gamma$ whose complement contains an open set and non-vanishing solutions $F_{A_j} \in W^{2,p}(M_0)\cap W^{3,p}_{loc}(M_0)$ to $\bar\partial F_{A_j} = A_j F_{A_j}$ with $|F_{A_j}|\mid_{\Gamma_0} = 1$ such that $F_{A_1}\mid_{\partial M_0\backslash \Gamma_0} = F_{A_2}\mid_{\partial M_0\backslash \Gamma_0}$. Furthermore, if $L_{X_j,V_j}u_j =0$ with $u_j \mid_{\Gamma_0} = 0$ then the boundary integral identity
\[0=\int_{M_0} \langle (|F_{A_1}|^{-2} - |F_{A_2}|^{-2})\bar\pl \tilde u_1, \bar\pl \tilde u_2\rangle +  \frac{1}{2}\langle (Q_2 |F_{A_2}|^{2} -  Q_1 |F_{A_1}|^{2})\tilde u_1,\tilde u_2\rangle \]
holds for $\tilde u_j := F_{A_j} u_j$.

The main result of this subsection is to show that the $F_{A_1}$ and $F_{A_2}$ chosen above have the same modulus. More precisely,
\begin{proposition}
\label{|FA1| = |FA2|}
If ${\cal C}_{X_1, V_1, \partial M_0\backslash \Gamma} = {\cal C}_{X_2, V_2, \partial M_0\backslash \Gamma}$ and $F_{A_j}$ are chosen as above then $|F_{A_1}| = |F_{A_2}|$.
\end{proposition}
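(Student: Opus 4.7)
The strategy is to insert into the boundary integral identity \eqref{new boundary integral equation} two CGOs from Section 5 carrying \emph{opposite} holomorphic phases, and then apply stationary phase at an interior critical point of the phase function. Opposite phases are essential: with both CGOs carrying $+\Phi/h$, the Hermitian pairing $\langle\bar\pl\tilde u_1,\bar\pl\tilde u_2\rangle$ would grow like $e^{2\phi/h}$, whereas phases $\pm\Phi/h$ turn it into the bounded oscillatory factor $e^{-2i\psi/h}$, custom--made for stationary phase analysis.

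Fix an arbitrary $\hat p\in\mathrm{int}(M_0)$. By Proposition \ref{criticalpoints}, pick a holomorphic Morse function $\Phi=\phi+i\psi$, real--valued on $\Gamma_0$, with an interior critical point $p_0$ close to $\hat p$ and $\psi(p_0)\neq 0$. By Lemma \ref{control the zero}, pick an antiholomorphic one--form $b$ with $b(p_0)\neq 0$ vanishing to arbitrarily high order at every other critical point of $\Phi$. Apply Proposition \ref{cgo for interior recover} to $L_{X_1,V_1}$ with phase $\Phi$, giving $u_1$ vanishing on $\Gamma_0$; then apply the \emph{same} proposition to $L_{X_2,V_2}$ but with $-\Phi$ in place of $\Phi$ --- which is again holomorphic Morse, real on $\Gamma_0$, with the same critical set --- giving $u_2$ vanishing on $\Gamma_0$. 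By Lemma \ref{dbar of solution} and its obvious $-\Phi$ analogue,
\[
\bar\pl\tilde u_1 \;=\; |F_{A_1}|^2\,b\,e^{\bar\Phi/h}+E_1,\qquad \bar\pl\tilde u_2 \;=\; |F_{A_2}|^2\,b\,e^{-\bar\Phi/h}+E_2,
\]
where each remainder $E_j$ is either $O(h)$ times an exponential, carries an amplitude factor of $\bar\pl\bar\Phi$ vanishing at $p_0$, or has the form $\bar\pl(e^{\pm\phi/h}F_{A_j}r_j)$ with $\|r_j\|+\|h\,dr_j\|\le Ch^{1+\epsilon}$. The leading--order pairing is
\[
\langle\bar\pl\tilde u_1,\bar\pl\tilde u_2\rangle \;=\; |F_{A_1}|^2|F_{A_2}|^2|b|^2\,e^{-2i\psi/h}+\text{lower order}.
\]

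Substituting into \eqref{new boundary integral equation}, the dominant contribution is an oscillatory integral with phase $-2\psi/h$. Critical points of $\psi$ other than $p_0$ contribute $O(h^\infty)$ because $b$ vanishes there to high order, and stationary phase at $p_0$ produces
\[
c(p_0)\,h\,\bigl(|F_{A_1}|^{-2}-|F_{A_2}|^{-2}\bigr)(p_0)\,|F_{A_1}(p_0)|^2|F_{A_2}(p_0)|^2|b(p_0)|^2\,e^{-2i\psi(p_0)/h}+o(h),
\]
with $c(p_0)\neq 0$. The potential term $\tfrac12\int(Q_2|F_{A_2}|^2-Q_1|F_{A_1}|^2)\langle\tilde u_1,\tilde u_2\rangle$ is $O(h^{1+2\epsilon})=o(h)$ by weighted Cauchy--Schwarz and the CGO estimate $\|e^{\mp\phi/h}u_j\|=O(h^{1/2+\epsilon})$ from Proposition \ref{cgo for interior recover}. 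Dividing by $h$, multiplying by $e^{2i\psi(p_0)/h}$, and sending $h\to 0$ forces $(|F_{A_1}|^{-2}-|F_{A_2}|^{-2})(p_0)=0$, hence $|F_{A_1}|(p_0)=|F_{A_2}|(p_0)$. Since $\hat p$ was arbitrary and such $p_0$ are dense in $\mathrm{int}(M_0)$, continuity gives $|F_{A_1}|=|F_{A_2}|$ on all of $M_0$.

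The main obstacle is the control of the cross--pairings between the leading parts of $\bar\pl\tilde u_j$ and the Carleman remainders $\bar\pl(e^{\pm\phi/h}F_{A_j}r_j)$: a direct weighted Cauchy--Schwarz yields only $O(h^\epsilon)$, which is \emph{not} $o(h)$. The fix is integration by parts against the oscillatory factor $e^{-2i\psi/h}$, transferring $\bar\pl$ off $r_j$ at the cost of a factor $h^{-1}\bar\pl\psi$; since $\bar\pl\psi$ vanishes at the critical point $p_0$, combining this vanishing with non--stationary--phase decay away from $p_0$ upgrades $O(h^\epsilon)$ to $o(h)$. The $O(1)$ pieces of $E_j$ are handled more easily since they already carry an explicit $\bar\pl\bar\Phi$ factor vanishing at $p_0$, which costs stationary phase an extra power of $h$.
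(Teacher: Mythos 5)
Your overall strategy is the same as the paper's: insert the CGO solutions of Proposition \ref{cgo for interior recover} with phases $\Phi$ and $-\Phi$ into the identity \eqref{new boundary integral equation}, reduce to stationary phase at the interior critical point $p_0$, kill the other critical points via high-order vanishing of $b$, control the potential term by Cauchy--Schwarz against the $O(h^{1/2+\epsilon})$ weighted bounds, and finish by density of $p_0$ and continuity of $|F_{A_j}|$. That structure, and your identification of the problematic cross-pairings with the Carleman remainder $\bar\partial(e^{\pm\phi/h}F_{A_j}r_j)$, both match the paper.

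The gap is in the mechanism you offer for closing those cross-pairing estimates. Moving $\bar\partial$ off $e^{\pm\phi/h}F_{A_j}r_j$ by taking the adjoint $\bar\partial^{*}$ lands $\partial_z$ on the other factor of the Hermitian pairing, namely $(|F_{A_1}|^{-2}-|F_{A_2}|^{-2})\,e^{\pm\bar\Phi/h}|F_{A_i}|^{2}\,b$. Since $e^{\pm\bar\Phi/h}$ and $b$ are antiholomorphic, $\partial_z$ annihilates them; only the $h$-independent factor $(|F_{A_1}|^{-2}-|F_{A_2}|^{-2})|F_{A_i}|^{2}$ is differentiated, the boundary terms drop because $|F_{A_1}|=|F_{A_2}|$ on $\partial M_0$, and Cauchy--Schwarz against $\|r_j\|_{L^2}=O(h^{1+\epsilon})$ then gives $O(h^{1+\epsilon})=o(h)$ with no extra devices. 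In other words, the claimed $h^{-1}\bar\partial\psi$ factor never actually appears; it is precisely the antiholomorphy of the leading CGO amplitude that makes the integration by parts lossless, and this is the observation the proof turns on.

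Moreover, even if a $h^{-1}\bar\partial\psi$ factor \emph{were} present, the repair you sketch does not work. The vanishing of $\bar\partial\psi$ at $p_0$ does not help when the amplitude is the $h$-dependent remainder $r_j$ controlled only in $L^2$: one cannot run stationary or non-stationary phase against an amplitude that is not fixed, and the pointwise Morse bound $|\bar\partial\psi|\lesssim d(\cdot,p_0)$ fed through Cauchy--Schwarz still yields $O(h^{\epsilon})$. Splitting into a ball of radius $\delta$ around $p_0$ and its complement and optimizing over $\delta$ does not improve this. So as written, your fix would not upgrade $O(h^\epsilon)$ to $o(h)$; the correct route is the adjoint manipulation above, which produces no $h^{-1}$ in the first place.
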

\noindent{\bf Proof. }If $\hat p$ is any interior point of $M_0$ and $B_\eps(\hat p)$ is a neighbourhood of the point, then by Proposition \ref{criticalpoints} there exists a Morse holomorphic function $\Phi = \phi + i\psi$ on $M$ which is real valued along $\Gamma_0$ with a critical point $p_0$ in $B_\eps(\hat p)$. If $\{p_0,.., p_N\}$ are the critical points of $\Phi$, we can construct by Lemma \ref{control the zero} an antiholomorphic 1-form $b$ which vanishes to order $k$ at $\{p_1,.., p_N\}$ and $b(p_0)\neq 0$. We have the following Lemma which we will prove at the end of the subsection:
\begin{lemma}
\label{main term plus o(h)}
For all such $\Phi$ and $b$ we have the following asymptotic as $h\to 0$:
\begin{eqnarray}
\label{main term plus o(h) identity}
0 = \int_{M_0} (|F_{A_2}|^2 - |F_{A_1}|^2) |b|^2 e^{-2i\psi/h} + o(h)
\end{eqnarray}
\end{lemma}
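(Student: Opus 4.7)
The plan is to substitute two CGO solutions with oppositely-signed exponential prefactors into the boundary integral identity of Proposition \ref{new boundary integral id}, arranged so that the pointwise product of amplitudes is bounded while the combined phases produce the oscillation $e^{-2i\psi/h}$ appearing in the statement. I would take $u_1$ to be the solution from Proposition \ref{cgo for interior recover} constructed from $(F_{A_1},\Phi,b)$, and $u_2$ to be the analogous solution constructed from $(F_{A_2},-\Phi,b)$. Since $-\Phi$ is again holomorphic Morse on $M$, real-valued on $\Gamma_0$, with the same critical set $\{p_0,\ldots,p_N\}$ as $\Phi$, the construction of Proposition \ref{cgo for interior recover} applies verbatim under the substitutions $\phi\mapsto-\phi$ and $\bar\Phi\mapsto-\bar\Phi$, and the same antiholomorphic form $b$ is admissible for both. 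One then has $\|e^{-\phi/h}u_1\|+\|e^{\phi/h}u_2\|\leq Ch^{1/2+\epsilon}$, and both $u_j$ vanish on $\Gamma_0$.

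Substituting $(u_1,u_2)$ into the boundary identity, the zeroth-order piece is immediately $o(h)$ by Cauchy--Schwarz and the size estimates:
\[
\Big|\tfrac{1}{2}\int_{M_0}\langle(Q_2|F_{A_2}|^2-Q_1|F_{A_1}|^2)\tilde u_1,\tilde u_2\rangle\Big|\leq C\|\tilde u_1\|_{L^2}\|\tilde u_2\|_{L^2}\leq Ch^{1+2\epsilon}.
\]
For the first-order piece I would invoke Lemma \ref{dbar of solution} to write
\[
\bar\pl\tilde u_j=e^{\sigma_j\bar\Phi/h}|F_{A_j}|^2 b+\mc E_j+\bar\pl\bigl(e^{\sigma_j\phi/h}F_{A_j}r_j\bigr),\qquad\sigma_1=+1,\ \sigma_2=-1,
\]
where $\mc E_j=-h\bar\pl(e^{\sigma_j\bar\Phi/h}|F_{A_j}|^2 C_{h,j})+he^{\sigma_j\bar\Phi/h}R_0^{(j)}$ with $C_{h,j},R_0^{(j)}$ bounded in $L^\infty$ uniformly in $h$. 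Using $\overline{e^{-\bar\Phi/h}}=e^{-\Phi/h}$, so that $e^{\bar\Phi/h}\,\overline{e^{-\bar\Phi/h}}=e^{-2i\psi/h}$, together with the pointwise identity $(|F_{A_1}|^{-2}-|F_{A_2}|^{-2})|F_{A_1}|^2|F_{A_2}|^2=|F_{A_2}|^2-|F_{A_1}|^2$, the main$\times$main contribution reproduces exactly the target integral $\int_{M_0}(|F_{A_2}|^2-|F_{A_1}|^2)|b|^2e^{-2i\psi/h}$.

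The main obstacle will be showing that every cross term (main$\times\mc E$, $\mc E\times$main, $\mc E\times\mc E$, and the pairings involving the $r_j$-remainder) contributes $o(h)$. For the $\mc E_j$ pieces I would integrate by parts to transfer the outer $\bar\pl$ onto the opposite factor; the resulting boundary contributions split into a part on $\Gamma_0$ (where $|F_{A_j}|=1$ makes the weight $|F_{A_1}|^{-2}-|F_{A_2}|^{-2}$ vanish) and a part on $\partial M_0\setminus\Gamma_0$ (where $F_{A_1}=F_{A_2}$ by Corollary \ref{same boundary condition for conjugation factor}, so the same weight vanishes again). The interior term then produces $e^{-2i\psi/h}$ multiplied either by an antiholomorphic factor (annihilated by $\pl$ up to lower-order pieces) or by $\pl\Phi$, which vanishes at the stationary point $p_0$; standard stationary phase on the former and direct bounded estimation on the latter yield $o(h)$. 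For the $r_j$ pieces I would again integrate by parts, with the boundary terms vanishing for the same reasons, and control the interior integral by the $H^1_{\rm scl}$-type bound $\|r_j\|+\|hdr_j\|\leq Ch^{1+\epsilon}$ paired against $O(1)$-sized factors, giving $O(h^{1+\epsilon})=o(h)$. Combining these estimates yields the claimed asymptotic.
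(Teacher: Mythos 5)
Your proposal is correct and follows essentially the same route as the paper's proof: both insert the two CGOs from Proposition \ref{cgo for interior recover} with phases $\Phi$ and $-\Phi$ into the boundary identity, use Lemma \ref{dbar of solution} and the identity $(|F_{A_1}|^{-2}-|F_{A_2}|^{-2})|F_{A_1}|^2|F_{A_2}|^2=|F_{A_2}|^2-|F_{A_1}|^2$ to isolate the main term, and then dispose of the cross terms by integrating by parts with the boundary contribution killed by $|F_{A_1}|=|F_{A_2}|$ on $\partial M_0$ combined with the $\|r_j\|\leq Ch^{1+\eps}$ bound and the $\int e^{\pm 2i\psi/h}f=o(1)$ principle. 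The only stylistic difference is that you group the boundary vanishing as two separate observations ($\Gamma_0$ and $\partial M_0\setminus\Gamma_0$) whereas the paper invokes $|F_{A_1}|=|F_{A_2}|$ on all of $\partial M_0$ at once, which is the same fact.
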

Since $b$ vanishes at all critical points of $\Phi$ except for $p_0$, \eqref{main term plus o(h) identity} has stationary phase expansion
\[0 = he^{2i\psi(p_0)/h} (|F_{A_2}(p_0)|^2 - |F_{A_1}(p_0)|^2) + o(h)\]
which implies that $|F_{A_2}(p_0)|^2 - |F_{A_1}(p_0)|^2=0$. Since $\eps>0$ can be chosen arbitrarily small, the continuity of $F_{A_j}$ then gives that $|F_{A_2}(\hat p)|^2 =|F_{A_1}(\hat p)|^2$ for any $\hat p \in M_0$.\qed

It remains to prove Lemma \ref{main term plus o(h)}.\\
\noindent{\bf Proof of Lemma \ref{main term plus o(h)}.} By Proposition \ref{new boundary integral id} we have that if $L_{X_j,V_j}u_j = 0$ and $u_j\mid_{\Gamma_0} = 0$ then
\[0=\int_{M_0} \langle (|F_{A_1}|^{-2} - |F_{A_2}|^{-2})\bar\pl \tilde u_1, \bar\pl \tilde u_2\rangle +  \frac{1}{2}\langle (Q_2 |F_{A_2}|^{2} -  Q_1 |F_{A_1}|^{2})\tilde u_1,\tilde u_2\rangle \]
where $\tilde u_j = F_{A_j}u_j$ and $Q_j = *dX_j + V_j$.

If $\Phi$ and $b$ are as given in the statement of the Lemma, let $u_1$ be the solution to $L_{X_1,V_1} u_1 = 0$ given by Proposition \ref{cgo for interior recover} for the phase $\Phi$ and let $u_2$ be the solution to $L_{X_2,V_2} u_2 = 0$ given by Proposition \ref{cgo for interior recover} for the phase $-\Phi$. That is,
\[u_1 = u_{0,+}'' + e^{\phi/h} r_1,\ \ \ u_2 = u_{0,-}'' + e^{-\phi/h}r_2\]
where $u_{0,\pm}''$ are the ansatz given by \eqref{def of u0''} for $\pm\Phi$ respectively. Plugging these solutions into this identity and using the estimate on $u_j$ in Proposition \ref{cgo for interior recover} in conjunction with the identity in Lemma \ref{dbar of solution}, the boundary integral identity becomes
{\Small
\begin{eqnarray}
\label{integral id terms}
0&=&\int_{M_0}  (|F_{A_2}|^2 - |F_{A_1}|^2)|b|^2 e^{-2i\psi/h}\\\nonumber&-&h\int_{M_0} \langle (1 - |F_{A_2}|^{-2}|F_{A_1}|^2)e^{\bar\Phi/h} b, \bar\partial \big(e^{-\bar\Phi/h}|F_{A_2}|^2  {\mathcal A}_h+e^{-\phi/h} F_{A_2} r\big)\rangle\\\nonumber&-&h\int_{M_0} \langle (|F_{A_1}|^{-2}|F_{A_2}|^2 - 1) \bar\partial \big(e^{\bar\Phi/h}|F_{A_1}|^2 {\mathcal A}_h' +e^{\phi/h} F_{A_1} r'\big),e^{-\bar\Phi/h} b \rangle\\\nonumber&+&h^2\int_{M_0} \langle (|F_{A_1}|^{-2} - |F_{A_2}|^{-2}) \bar\partial \big(e^{\bar\Phi/h}|F_{A_1}|^2 {\mathcal A}_h'+e^{\phi/h} F_{A_1} r'\big), \bar\partial \big(e^{-\bar\Phi/h}|F_{A_2}|^2 {\mathcal A}_h+e^{-\phi/h} F_{A_2} r\big) \rangle\\\nonumber&+&o(h) 
\end{eqnarray}}
where ${\mathcal A}_h := \tilde a_0 + e^{-2i\psi(p_0)/h} \tilde a_1+ h \tilde a_h$ and ${\mathcal A}_h' := \tilde a_0' + e^{-2i\psi(p_0)/h} \tilde a_1'+ h \tilde a_h'$ are antiholomorphic functions depending on the parameter $h>0$.

The second term can be estimated by taking the adjoint of $\bar\partial$ and using that \[|F_{A_1}| \mid_{\partial M_0}= |F_{A_2}|\mid_{\partial M_0}\] to obtain
{\small\begin{eqnarray}
\label{second term}
&&h\int_{M_0} \langle (1 - |F_{A_2}|^{-2}|F_{A_1}|^2)e^{\bar\Phi/h} b, \bar\partial \big(e^{-\bar\Phi/h}|F_{A_2}|^2 {\mathcal A}_h+e^{-\phi/h} F_{A_2} r\big)\rangle\\\nonumber &=&- h\int_{M_0} e^{-2i\psi/h}\partial (|F_{A_2}|^{-2}|F_{A_1}|^2)\wedge b\big(|F_{A_2}|^2 {\mathcal A}_h+e^{-i\psi/h} F_{A_2} r\big).\end{eqnarray}}
By Proposition \ref{cgo for interior recover} the remainder $r$ satisfies the estimate $\|r\|\leq Ch^{1+\eps}$. This combined with the fact that $\int e^{2i\psi/h} f = o(1)$ for all $f\in L^1$ independent of $h$ gives that \eqref{second term} can be estimated by
{\small\begin{eqnarray}
\label{second term is o(h)}
&&h\int_{M_0} \langle (1 - |F_{A_2}|^{-2}|F_{A_1}|^2)e^{\bar\Phi/h} b, \bar\partial \big(e^{-\bar\Phi/h}|F_{A_2}|^2  {\mathcal A}_h+e^{-\phi/h} F_{A_2} r\big)\rangle = o(h).
\end{eqnarray}}
We have then that the second term of \eqref{integral id terms} can be estimated by $o(h)$. The third term of \eqref{integral id terms} can be treated the same way to obtain
{\small\begin{eqnarray}
\label{third term is o(h)}
&&h\int_{M_0} \langle (|F_{A_1}|^{-2}|F_{A_2}|^2 - 1) \bar\partial \big(e^{\bar\Phi/h}|F_{A_1}|^2  {\mathcal A}_h' +e^{\phi/h} F_{A_1} r'\big),e^{-\bar\Phi/h} b \rangle= o(h).
\end{eqnarray}}
Therefore, plugging the estimates of \eqref{second term is o(h)} and \eqref{third term is o(h)} into \eqref{integral id terms} we have
{\Small
\begin{eqnarray}
0&=&\int_{M_0}  (|F_{A_2}|^2 - |F_{A_1}|^2)|b|^2 e^{-2i\psi/h}\\\nonumber&+&h^2\int_{M_0} \langle (|F_{A_1}|^{-2} - |F_{A_2}|^{-2}) \bar\partial \big(e^{\bar\Phi/h}|F_{A_1}|^2 {\mathcal A}_h'+e^{\phi/h} F_{A_1} r'\big), \bar\partial \big(e^{-\bar\Phi/h}|F_{A_2}|^2 {\mathcal A}_h+e^{-\phi/h} F_{A_2} r\big) \rangle
\\\nonumber &+&o(h).\end{eqnarray}}

For the remaining integral we integrate by parts again to obtain 
{\small
\begin{eqnarray*}
\nonumber0&=&\int_{M_0}  (|F_{A_2}|^2 - |F_{A_1}|^2)|b|^2 e^{-2i\psi/h}\\\nonumber&-&h^2\int_{M_0}\big(e^{\bar\Phi/h}|F_{A_1}|^2 {\mathcal A}_h'+e^{\phi/h} F_{A_1} r'\big) \langle \bar\partial(|F_{A_1}|^{-2} - |F_{A_2}|^{-2}) , \bar\partial 
\big(e^{-\bar\Phi/h}|F_{A_2}|^2 {\mathcal A}_h+e^{-\phi/h} F_{A_2} r\big) \rangle\\\nonumber &+& h^2 \int_{M_0} \big(e^{\bar\Phi/h}|F_{A_1}|^2 {\mathcal A}_h'+e^{\phi/h} F_{A_1} r'\big)(|F_{A_1}|^{-2} - |F_{A_2}|^{-2}) \Delta_g \big(e^{-\bar\Phi/h}|F_{A_2}|^2 {\mathcal A}_h+e^{-\phi/h} F_{A_2} r\big)
\\ &+&o(h).\end{eqnarray*}}
Using the fact that ${\mathcal A}_h = \tilde a_0 + e^{-2i\psi(p_0)/h} \tilde a_1+ h \tilde a_h$ with $\|a_h\|_{C^k}$ independent of $h$ and \[e^{\phi/h}\Delta_g e^{-\phi/h}r = 2e^{\phi/h}\langle X_2,  d e^{-\phi/h} r\rangle + (V_2 + |X_2|^2 )r + O_{L^2}(h^{\demi+\eps}),\ \  \|r\|_{H^1_{scl}}\leq C h^{1+\eps},\] 
we have that the above expression becomes
\begin{eqnarray*}
0 = \int_{M_0} (|F_{A_2}|^2 - |F_{A_1}|^2) |b|^2 e^{-2i\psi/h} + o(h)
\end{eqnarray*}
and the proof is complete.\qed\\
\end{subsection}
\begin{subsection}{Gauge Equivalence of $X_1$ and $X_2$.} The purpose of this subsection is to prove the first assertion of Theorem \ref{main thm, larger Gamma}. More precisely,
\begin{proposition}
\label{gauge equivalence}There exists an open subset of the boundary $\Gamma_0\subset\partial M_0$ compactly containing $\Gamma$ with $\partial M_0\backslash \bar\Gamma_0$ an open segment and a non-vanishing function $\Theta$ such that 
\[X_1-X_2 =d\Theta/\Theta,\ \ \ \Theta\mid_{\partial M_0\backslash\Gamma_0} = 1.\]
\end{proposition}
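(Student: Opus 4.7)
The plan is to set $\Theta := F_{A_1}/F_{A_2}$, which is well-defined and non-vanishing on $M_0$ (both factors being non-vanishing), and verify that this candidate satisfies the claimed gauge identity directly. Since Proposition \ref{|FA1| = |FA2|} gives $|F_{A_1}| = |F_{A_2}|$ pointwise on $M_0$, the function $\Theta$ is unitary: $|\Theta|\equiv 1$ on $M_0$.

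The $(0,1)$-piece of $d\log\Theta$ is immediate from the defining equations: the quotient rule applied to $\bar\partial F_{A_j} = iA_j F_{A_j}$ gives
\[ \bar\partial\Theta/\Theta \;=\; i(A_1 - A_2). \]
For the $(1,0)$-piece, I will use the unitarity $\Theta\bar\Theta = 1$. Applying $\partial$ to this relation yields $\partial\Theta/\Theta = -\partial\bar\Theta/\bar\Theta$, while complex-conjugating the previous display (together with $\overline{A_j} = \bar A_j$, which holds since $X_j = A_j + \bar A_j$ is real) gives $\partial\bar\Theta/\bar\Theta = -i(\bar A_1 - \bar A_2)$. Combining, $\partial\Theta/\Theta = i(\bar A_1 - \bar A_2)$, and adding the two pieces gives
\[ d\Theta/\Theta \;=\; i(A_1 - A_2) + i(\bar A_1 - \bar A_2) \;=\; i(X_1 - X_2), \]
which matches the gauge identity in the form written in Theorem \ref{main thm, larger Gamma}. (Note: the statement of the proposition appears to be missing a factor of $i$; the derivation shows that both sides of any such identity must be purely imaginary when $\Theta$ is unitary.)

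Finally, the boundary condition $\Theta|_{\partial M_0\setminus\Gamma_0} = 1$ is precisely the content of the matching clause $F_{A_1}|_{\partial M_0\setminus\Gamma_0} = F_{A_2}|_{\partial M_0\setminus\Gamma_0}$ already secured in Proposition \ref{new boundary integral id} and Corollary \ref{same boundary condition for conjugation factor}. No new analytic content is needed at this stage: the genuine obstacle was the pointwise modulus equality $|F_{A_1}| = |F_{A_2}|$, which was overcome by the CGO stationary-phase argument of Proposition \ref{|FA1| = |FA2|} using the ansatz $u_0''$ from Section 5 together with the reflected $F_{A_j}$-gauge that makes the boundary integral identity of Proposition \ref{new boundary integral id} amenable to stationary phase. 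With that in hand, the present proposition reduces to the short algebraic deduction above.
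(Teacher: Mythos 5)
Your proof is correct and follows essentially the same route as the paper: set $\Theta = F_{A_1}/F_{A_2}$, use $|F_{A_1}|=|F_{A_2}|$ (Proposition \ref{|FA1| = |FA2|}) to see $\Theta$ is unitary, read off the $(0,1)$-piece from $\bar\partial F_{A_j} = iA_jF_{A_j}$, and get the $(1,0)$-piece by conjugation (the paper phrases this via $\Theta = F_{\bar A_1}/F_{\bar A_2}$ with $F_{\bar A_j}=\bar F_{A_j}^{-1}$, which is the same unitarity identity you invoke). Your observation that the statement of the proposition should read $i(X_1-X_2)=d\Theta/\Theta$ is also correct — this is a typo, and the paper's own proof and Theorems \ref{main thm} and \ref{main thm, larger Gamma} carry the factor of $i$.
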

\noindent{\bf Proof.} By Lemma \ref{|FA1| = |FA2|} we can choose non-vanishing functions $F_{A_j}$ satisfying $\bar\partial F_{A_j} = iA_j F_{A_j}$ with boundary condition $|F_{A_j}|\mid_{\Gamma_0} = 1$ such that 
\[F_{A_1}\mid_{\partial M_0\backslash \Gamma_0} = F_{A_2}\mid_{\partial M_0\backslash \Gamma_0}\ \text{and}\ |F_{A_1}| = |F_{A_2}|\ \text{in}\ M_0.\]
Observe that if we define ${F}_{\bar{A}_j}:={\bar F_{A_j}}^{-1}$, it is a solution to $\partial {{F}_{\bar{A}_j}} = i\bar A_j {{F}_{\bar{A}_j}}$ with boundary condition $|F_{\bar{A}_j}|\mid_{\Gamma_0} = 1$ such that 
\[ F_{\bar{A}_1}\mid_{\partial M_0\backslash \Gamma_0} =  F_{\bar{A}_2}\mid_{\partial M_0\backslash \Gamma_0}\ \text{and}\ |F_{\bar{A}_1}| = |F_{\bar{A}_2}|\ \text{in}\ M_0.\]
Therefore, $\Theta:=F_{A_1}/F_{A_2}={F}_{\bar{A}_1}/{F}_{\bar{A}_2}$ is a function mapping $M_0$ to the unit circle $S^1\subset \cc$ solving the differential equation
\[\bar{\pl}\Theta/\Theta= i(A_1-A_2), \quad \pl\Theta/\Theta=i(\bar{A}_1-\bar{A}_2)\]
and thus $d\Theta/\Theta=i(X_1-X_2)$ with $\Theta \mid_{\partial M_0\backslash\Gamma_0} = 1$ and the proof is complete.\qed\\
\end{subsection}
\begin{subsection}{Identifying Zeroth Order Term} The purpose of this section is to prove that under the assumptions of Theorem \ref{main thm, larger Gamma}, $V_1= V_2$. In conjunction with Proposition \ref{gauge equivalence} this completes the proof of Theorem \ref{main thm, larger Gamma}. The argument presented here is almost identical to that of of \cite{duke} which we repeat here for the convenience of the reader.

We begin by observing that due to Proposition \ref{gauge equivalence} the operators $d + iX_1$ and $d+iX_2$ are gauge equivalent. Therefore we can assume, by taking a gauge transformation, that $X := X_1 = X_2\in W^{3,p}(M_0)$ and that 
\[{\cal C}_{X, V_1, \partial M_0\backslash \Gamma_0} = {\cal C}_{X,V_2, \partial M_0 \backslash \Gamma_0}.\]
So by repeating the same boundary determination argument in the appendix of \cite{duke} we can conclude that $V_{1}\mid_{\partial M_0\backslash \Gamma_0} =V_{2}\mid_{\partial M_0\backslash \Gamma_0}$.

If we let $\alpha\in  W^{4,p}(M_0)$ be a solution of 
\[\bar\partial \alpha = A := \pi_{0,1}X,\ \ \ i\alpha\mid_{\Gamma_0} \in \R\]
given by Proposition \ref{surject}, and set $F_A = e^{i\alpha}$ we have by Proposition \ref{new boundary integral id}
\[0=  \int_{M_0} (V_2 - V_1 )|F_A|^4 u_1 \bar u_2 \]
for all $u_j$ solving 
\[L_{X,V_j} u_j = 0\ \ \ u_j\mid_{\Gamma_0} = 0\ \ \text{for}\ \  j=1,2.\]
Let $p_0\in M_0$ be an interior point such that there exits a holomorphic Morse function $\Phi$ on $M$ with $\Phi\mid_{\Gamma_0} \in \R$. We also require that $\im(\Phi(p_0))\neq 0$. Such points are dense on $M_0$ by Proposition \ref{criticalpoints}. Let $a$ be a holomorphic function which is purely imaginary on $\Gamma_0$ such that $a(p_0)\neq0$ and $a$ vanishes to high order at all other critical points of $\Phi$. One can construct such a holomorphic function by Lemma \ref{control the zero}. Applying Proposition \ref{completecgo} to both $\Phi$ and $-\Phi$ yields solutions to $L_{X,V_j} u_j = 0$ which are of the form:
\begin{equation*}
u_1 = \big( e^{\Phi/h}(F_A^{-1}a + \bar F_Ar_1) + e^{\bar\Phi/h}(\bar F_A\bar a +  F_A^{-1}r_1') + h e^{\Phi/h}F_A^{-1}a_0 + h e^{\bar\Phi/h} \bar F_A \tilde{a}_0\big) + e^{\phi/h} r_2
\end{equation*}
\begin{equation*}
u_2 = \big( e^{-\Phi/h}(F_A^{-1}a + \bar F_As_1) + e^{-\bar\Phi/h}(\bar F_A\bar a +  F_A^{-1}s_1') + h e^{-\Phi/h}F_A^{-1}a'_0 + h e^{-\bar\Phi/h} \bar F_A \tilde{a}'_0\big) + e^{-\phi/h} s_2
\end{equation*}
where 
\[r_1 = h\til{r}_{12} + o_{L^2}(h), r_1' = h\til{r}'_{12} + o_{L^2}(h), s_1 = h\til{s}_{12} + o_{L^2}(h), s_1' = h\til{s}_{12} + o_{L^2}(h)\]
with $\til{r}_{12}, \til{r}_{12}', \til{s}_{12}, \til{s}_{12}'\in L^2(M_0)$ independent of $h$ and $\|r_2\|_{L^2} +\|s_2\|_{L^2} = o(h)$.

Plug these solutions into the integral identity we have that 
\[0 =  \int_{M_0} (V_2 - V_1 )|F_A|^4  (e^{2i\psi/h}|F_A|^{-2} |a|^2 + e^{-2i\psi/h} |F_A|^2 |a|^2 + g_0+ hg_1) + o(h)\]
for some $g_0, g_1\in L^2(M_0)$ independent of $h$. 
\begin{lemma}
\label{stationary phase}
In the limit as $h\to0$ the following asymptotic holds:
\begin{eqnarray*}
&&\int_{M_0}(V_2 - V_1 )|F_A|^2  e^{2i\psi/h} |a|^2+ (V_2 - V_1 )|F_A|^6  e^{-2i\psi/h} |a|^2\\&=& hC_+ e^{2i\psi(p_0)/h}(V_2-V_1)(p_0) +hC_- e^{-2i\psi(p_0)/h}(V_2-V_1)(p_0) + o(h).
\end{eqnarray*}
Here $C_+$ and $C_-$ are non-zero constants independent of $h$.
\end{lemma}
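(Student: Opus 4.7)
The plan is to apply the method of stationary phase. Since $\Phi$ is Morse, its imaginary part $\psi$ is a harmonic Morse function whose critical points coincide with those of $\Phi$, namely $\{p_0,p_1,\dots,p_N\}$; moreover the Hessian of $\psi$ at each such point is a non-degenerate saddle (in the local holomorphic coordinate its determinant equals $-|\Phi''(p_j)|^2\neq 0$), so the signature of the Hessian vanishes. Write the two amplitudes as $w_\pm := (V_2-V_1)|F_A|^{2k_\pm}|a|^2$ with $k_+=1$ and $k_-=3$, and partition $M_0$ using smooth cutoffs $\chi_0,\chi_1,\dots,\chi_N$ supported in small disjoint neighborhoods of the $p_j$ together with $\chi_\infty := 1-\sum_{j}\chi_j$, whose support lies where $\nabla\psi\neq 0$.

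Near the interior critical point $p_0$, the classical two-dimensional interior stationary phase formula (with zero signature) yields
\[
\int_{M_0}\chi_0\, w_\pm \, e^{\pm 2i\psi/h}\,{\rm dvol}_g
= \frac{\pi h}{|\Phi''(p_0)|}\,|F_A(p_0)|^{2k_\pm}|a(p_0)|^2\,(V_2-V_1)(p_0)\,e^{\pm 2i\psi(p_0)/h}+O(h^2),
\]
which provides the leading terms $hC_\pm e^{\pm 2i\psi(p_0)/h}(V_2-V_1)(p_0)$; the constants $C_\pm$ are non-zero because $a(p_0)\neq 0$, $F_A(p_0)\neq 0$, and $\Phi''(p_0)\neq 0$. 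For each $j\geq 1$, the holomorphic function $a$ is chosen via Lemma \ref{control the zero} to vanish to order at least $M$ at $p_j$, so $w_\pm$ vanishes to order $2M$ there; a standard Taylor expansion in Morse normal coordinates then bounds the $\chi_j$-piece by $O(h^M)=o(h)$ as soon as $M\geq 2$, irrespective of whether $p_j$ is interior or boundary.

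The delicate part is the $\chi_\infty$-piece. Since $\nabla\psi\neq 0$ on its support, I would integrate by parts twice using the vector field $V_\pm := \pm\frac{h}{2i|\nabla\psi|^2}\nabla\psi$, which satisfies $V_\pm\cdot\nabla e^{\pm 2i\psi/h}=e^{\pm 2i\psi/h}$; two integrations by parts make the interior remainder $O(h^2)$. The main obstacle is that the boundary term produced by the first integration by parts is only $O(h)$ in each individual integral and cannot be discarded on its own; it is at this point that the boundary determination and the normalization $|F_A|=1$ on $\Gamma_0$ enter decisively. On $\partial M_0\setminus \Gamma_0$, the boundary determination step preceding the lemma gives $V_1=V_2$, so $w_\pm$ vanishes identically on that segment and contributes nothing. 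On $\Gamma_0$ one has $\psi\equiv 0$ and $|F_A|\equiv 1$, whence $e^{\pm 2i\psi/h}\equiv 1$ and $w_+|_{\Gamma_0}=w_-|_{\Gamma_0}=(V_2-V_1)|a|^2$; the boundary terms produced by the $+$-integral and the $-$-integral thus differ only by the sign of $V_\pm$, so they are exact opposites and cancel when the two integrals are summed. Combining the three contributions with all the $o(h)$ leftovers yields the claimed asymptotic.
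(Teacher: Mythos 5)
Your proof takes essentially the same route as the paper's: stationary phase at $p_0$ for the leading term, the high-order vanishing of $a$ at $p_1,\dots,p_N$ to kill those contributions, integration by parts in the non-stationary region, and --- crucially --- the cancellation of the $O(h)$ boundary term using $\psi\mid_{\Gamma_0}=0$, $|F_A|\mid_{\Gamma_0}=1$, and $V_1=V_2$ on $\partial M_0\setminus\Gamma_0$. The main mechanisms are identical.

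The small organizational differences are worth noting. You localize near \emph{every} critical point with separate cutoffs $\chi_j$ and treat each one by Taylor expansion; the paper localizes only near $p_0$ and handles the remaining critical points implicitly, by observing that the high-order vanishing of $a$ makes the divergence term $L^1$ after one integration by parts, and then invoking $\int e^{\pm 2i\psi/h}f=o(1)$ for $f\in L^1$. This $L^1$-plus-Riemann-Lebesgue route lets the paper get away with a single integration by parts and avoid regularity issues, whereas your two integrations by parts and the claimed $O(h^2)$ remainders require more derivatives of the amplitude than the stated regularity $V_j\in W^{2,p}$, $F_A\in W^{2,p}(M_0)$ strictly supports near $\partial M_0$. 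This is an over-claim but not a gap: $o(h)$ suffices everywhere, and a single integration by parts combined with the $L^1$ argument recovers exactly that. Likewise, your assertion that the boundary critical points $p_j$ contribute $O(h^M)$ ``by Taylor expansion'' is a bit terse --- the paper's $L^1$ route sidesteps having to discuss boundary stationary phase at all --- but the conclusion $o(h)$ again holds because $a$ vanishes to high order at those points.
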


Using Lemma \ref{stationary phase} we have that
\[0 = \int_{M_0} (V_1 -V_2) g_0 + O(h)\]
and therefore
\[0 =  \int_{M_0} (V_2 - V_1 )|F_A|^4  (e^{2i\psi/h}|F_A|^{-2} |a|^2 + e^{-2i\psi/h} |F_A|^2 |a|^2 + hg_1) + o(h).\]
Using Lemma \ref{stationary phase} again we get that
\[0 = C_+ e^{2i\psi(p_0)/h} (V_2-V_1)(p_0) +  C_- e^{-2i\psi(p_0)/h}(V_1-V_2)(p_0) + \int_{M_0}(V_1-V_2)|F_A|^4g_1 + o(1)\]
for constants $C_\pm$ independent of $h$. Since $\psi(p_0)\neq 0$ we can choose a sequence of $h\to 0$ such that $ e^{2i\psi(p_0)/h}=  e^{-2i\psi(p_0)/h} =1$ and another sequence $h \to 0$ such that $ e^{2i\psi(p_0)/h}=  e^{-2i\psi(p_0)/h} =-1$ to obtain \[\int_{M_0}(V_1 -V_2) |F_A|^4g_1 = 0.\] 
Therefore, we have that 
\[0 = C_+ e^{2i\psi(p_0)/h} (V_2-V_1)(p_0) +  C_- e^{-2i\psi(p_0)/h}(V_1-V_2)(p_0) + o(1).\]
Again we choose a sequence $h\to 0$ such that $ e^{2i\psi(p_0)/h}= i$ and another sequence $h\to 0$ such that $ e^{2i\psi(p_0)/h}=  e^{-2i\psi(p_0)/h} =1$ we can obtain $(V_1-V_2)(p_0) = 0$.

In order to complete the proof we must provide the\\
\noindent{\bf Proof of Lemma \ref{stationary phase}.} Let $\chi$ be a smooth cutoff function on $M_0$ which is identically $1$ everywhere 
except inside a small ball containing $p_0$ and no other critical point of $\Phi$, and $\chi=0$ near $p_0$.
Setting $V:= V_2- V_1$ we split the oscillatory integral in two parts:
\[\begin{gathered}
\int_{M_0} (e^{2i\psi/h}|F_A|^2 + e^{-2i\psi/h}|F_A|^6)V |a|^2 = \int_{M_0}\chi (e^{2i\psi/h}|F_A|^2 + e^{-2i\psi/h}|F_A|^6)V |a|^2\\+ \int_{M_0}(1-\chi) (e^{2i\psi/h}|F_A|^2 + e^{-2i\psi/h}|F_A|^6)V |a|^2 
 \end{gathered}\]
The phase $\psi$ has nondegenerate critical points, therefore, a standard application of the stationary phase at $p_0$ gives
\[ \int_{M_0}(1-\chi) (e^{2i\psi/h}|F_A|^2 + e^{-2i\psi/h}|F_A|^6)V |a|^2 =hC_+ e^{2i\psi(p_0)/h}V(p_0) +hC_- e^{-2i\psi(p_0)/h}V(p_0) + o(h).\]
Define the potential $\til{V}(\cdot):=V(\cdot)-V(p_0)\in C^{1,\alpha}(M_0)$, then we show that 
\begin{equation}\label{o of h}
\int_{M_0}(1-\chi) (e^{2i\psi/h}|F_A|^2 + e^{-2i\psi/h}|F_A|^6)\til{V}|a|^2=o(h).
\end{equation}
Indeed, first by integration by parts and using $\Delta_g\psi=0$ one has
\[\begin{split}
\int_{M_0}(1-\chi) e^{2i\psi/h} |F_A|^2\til{V} |a|^2=& 
\frac{h}{2i}\int_{M_0}\cjg de^{2i\psi/h} ,d\psi\cjd |F_A|^2\til{V} \frac{(1-\chi)|a|^2}{|d \psi|^2} {\rm{dv}}_g\\
=& \frac{h}{2i}\int_{M_0}e^{2i\psi/h} \cjg d\Big(\frac{(1-\chi)|F_A|^2 |a|^2\til{V}}{|d\psi|^2}\Big),d\psi\cjd
\end{split}\]
and
\[\begin{split}
\int_{M_0}(1-\chi) e^{-2i\psi/h} |F_A|^6\til{V} |a|^2=& 
\frac{-h}{2i}\int_{M_0}\cjg de^{-2i\psi/h} ,d\psi\cjd |F_A|^6\til{V} \frac{(1-\chi)|a|^2}{|d \psi|^2} {\rm{dv}}_g\\
=& \frac{-h}{2i}\int_{M_0}e^{-2i\psi/h} \cjg d\Big(\frac{(1-\chi)|F_A|^6 |a|^2\til{V}}{|d\psi|^2}\Big),d\psi\cjd
\end{split}\]
but we can see that $\cjg d((1-\chi)|F_A|^k |a|^2\til{V}/|d\psi|^2),d\psi\cjd\in L^1(M_0)$: 
this follows directly from the fact that $\til{V}$ is in the H\"older space $C^{1,\alpha}(M_0)$ and $\til{V}(p_0)=0$,
and from the non-degeneracy of ${\rm Hess}(\psi)$.
It then suffice to observe that $\int e^{\pm2i\psi/h} f = o(1)$ for all $f\in L^1(M_0)$ to conclude that \eqref{o of h} holds.
Using similar argument, we now show that
\[\int_{M_0}\chi (e^{2i\psi/h} |F_A|^2+ e^{-2i\psi/h}|F_A|^6)V |a|^2   = o(h).\]
Indeed, since $a$ vanishes to large order at all boundary critical points of $\psi$, we may write
{\tiny\[\begin{split}
\int_{M_0}\chi (e^{2i\psi/h}|F_A|^2 + e^{-2i\psi/h}|F_A|^6)V |a|^2 {\rm{dv}}_g =& \frac{h}{2i}\int_{M_0}\big(\cjg d e^{2i\psi/h} ,d\psi\cjd  |F_A|^2 -\cjg d e^{-2i\psi/h},d\psi\cjd  |F_A|^6\big)\frac{\chi V|a|^2}{|d \psi|^2}\\
=&-\frac{h}{2i}\int_{M_0}(e^{2i\psi/h} {\rm div}_g\Big(V\frac{\chi|F_A|^2|a|^2}{|d\psi|^2}\nabla^g\psi\Big)- e^{-2i\psi/h}{\rm div}_g\Big(V\frac{\chi|F_A|^6|a|^2}{|d\psi|^2}\nabla^g\psi\Big)) \\
&+ \frac{h}{2i}\int_{\Gamma} (e^{2i\psi/h} - e^{-2i\psi/h})V \frac{|a|^2}{|d\psi|^2}\pl_\nu\psi\,.
\end{split}\]}
Here the expression for the boundary integral is obtained by using the fact that $V_1 = V_2$ on $\partial M_0 \backslash \Gamma_0$ from boundary determination and $|F_A| = 1$ on $\Gamma_0$ by construction. 

For the interior integral we use the fact that $\int e^{\pm2i\psi/h} f = o(1)$ for all $f\in L^1(M_0)$ to conclude that 
\[-\frac{h}{2i}\int_{M_0}(e^{2i\psi/h} {\rm div}_g\Big(V\frac{\chi|F_A|^2|a|^2}{|d\psi|^2}\nabla^g\psi\Big)- e^{-2i\psi/h}{\rm div}_g\Big(V\frac{\chi|F_A|^6|a|^2}{|d\psi|^2}\nabla^g\psi\Big)) = o(h)\]
and for the boundary integral, 
we observe that on $\Gamma$, $\psi = 0$ by construction so 
$(e^{2i\psi/h} - e^{-2i\psi/h}) = 0$.  Therefore
\[\int_{M_0}\chi (e^{2i\psi/h} + e^{-2i\psi/h})V |a|^2 {\rm{dv}}_g = o(h)\] 
and the proof is complete.\qed

\end{subsection}
\end{section}

\end{document}